\date{November 13, 2011}   
\theoremstyle{plain}  
\newtheorem{theorem}{Theorem}[section]
\newtheorem*{theorem*}{Theorem}
\newtheorem{lemma}[theorem]{Lemma}
\newtheorem{proposition}[theorem]{Proposition}
\theoremstyle{definition}
\newtheorem*{notation*}{Notation}
\newtheorem{definition}[theorem]{Definition}
\newtheorem{assumption}[theorem]{Assumption}
\theoremstyle{remark}
\newtheorem{example}[theorem]{Example}
\newtheorem{remark}[theorem]{Remark}
\newtheorem*{claim*}{Claim}
\numberwithin{equation}{section}
\newcommand{\suchthat}{\;|\;}
\renewcommand{\leq}{\leqslant}
\renewcommand{\geq}{\geqslant}
\renewcommand{\setminus}{\smallsetminus}
\newcommand{\into}{\hookrightarrow}
\newcommand{\C}{\mathbb{C}}
\newcommand{\cH}{\mathcal{H}}
\newcommand{\cF}{\mathcal{F}}
\newcommand{\cM}{\mathcal{M}}
\newcommand{\xra}{\xrightarrow}
\DeclareMathOperator{\Jac}{Jac}
\DeclareMathOperator{\divisor}{div}
\DeclareMathOperator{\Prym}{Prym}
\DeclareMathOperator{\cNm}{Nm}
\DeclareMathOperator{\Supp}{Supp}
\DeclareMathOperator{\ord}{ord} 
\DeclareMathOperator{\tr}{tr}
\DeclareMathOperator{\rk}{rk}
\DeclareMathOperator{\End}{End}
\DeclareMathOperator{\Id}{Id}
\newcommand{\Pic}{\operatorname{Pic}}
\newcommand{\Dbar}{\overline{D}}
\let\oldmarginpar\marginpar
\renewcommand\marginpar[1]{\oldmarginpar{\tiny\bf\begin{flushleft} #1
\end{flushleft}}}
\begin{document}

\title[The singular fibre of the Hitchin map]{The singular fibre of the Hitchin map}

\author[P. B. Gothen]{Peter B. Gothen}
\address{Centro de Matem\'atica da Universidade do Porto\\
Faculdade de Ci\^encias, Universidade do Porto \\
Rua do Campo Alegre 687 \\ 4169-007 Porto \\ Portugal }
\email{pbgothen@fc.up.pt}

\author[A. G. Oliveira]{Andr\'e G. Oliveira}
\address{Departamento de Matem\'atica\\
  Universidade de Tr\'as-os-Montes e Alto Douro\\
  Quinta dos Prados, Apartado 1013 \\ 5000-911 Vila Real \\ Portugal }
\email{agoliv@utad.pt}

\thanks{
Members of VBAC (Vector Bundles on Algebraic Curves).
Partially supported by CRUP through Acção Integrada Luso-Espanhola no.\
E-38/09
and by the FCT (Portugal) with EU (COMPETE) and national funds 
through the projects PTDC/MAT/099275/2008 and PTDC/MAT/098770/2008,
and through Centro de Matemática da Universidade do Porto
(PEst-C/MAT/UI0144/2011, first author) and Centro de Matemática da
Universidade de Trás-os-Montes e Alto Douro (PEst-OE/MAT/UI4080/2011,
second author).
}

\subjclass[2000]{Primary 14H60, 14H40; Secondary 14D20}

\begin{abstract}
  We give a description of the singular fibre of the Hitchin map on
  the moduli space of $L$-twisted Higgs pairs of rank 2 with fixed
  determinant bundle, when the corresponding spectral curve has any
  singularity of type $A_{m-1}$. In particular, we prove
  directly that this fibre is connected.
\end{abstract}

\maketitle


\section{Introduction}

Let $X$ be a compact Riemann surface of genus $g$ and let $L \to X$ be
a holomorphic line bundle. An \emph{$L$-twisted Higgs pair} or
\emph{Hitchin pair} is a pair $(E,\varphi)$, where $E \to X$ is a
holomorphic vector bundle and $\varphi \in H^0(X, \End(E) \otimes L)$
is an $L$-twisted endomorphism. In particular, if $K$ denotes the
canonical line bundle of $X$, then a $K$-twisted Higgs pair is a Higgs
bundle.

Denote by $\cM^\Lambda_L$ the moduli space of semistable $L$-twisted Higgs pairs
of rank $2$ with fixed determinant bundle $\Lambda \to X$ and
$\tr(\varphi)=0$. The Hitchin map
\begin{align*}
  \mathcal{H}\colon \cM^\Lambda_L &\longrightarrow H^0(X,L^2) \\
  (E,\varphi) &\longmapsto \det(\varphi)
\end{align*}
is a proper map. It plays a central role in many important aspects of
Higgs bundle theory. Two examples are integrable systems (see, e.g.,
Hitchin \cite{hitchin:1987b}, Bottacin \cite{bottacin:1995}, Markman
\cite{markman:1994} and Donagi--Markman \cite{donagi-markman:1996})
and the study of special representations of surface groups (now known
as Hitchin representations) initiated by Hitchin~\cite{hitchin:1992}.
More recently, the Hitchin map played a crucial role, for example, in
the work of Kapustin and Witten \cite{kapustin-witten:2007}, Frenkel
and Witten \cite{frenkel-witten:2007} and Ng\^o \cite{ngo:2006,ngo:2010}.

For a section $s \in H^0(X,L^2)$, let $X_s$ be the spectral curve
given by the zero locus of $s$ inside the total space of $L$. Then
$X_s \to X$ is a double cover. Note that if $s = \det(\varphi)$ then the
preimage of $x \in X$ corresponds to the eigenvalues of $\varphi_x\colon
E_x \to E_x \otimes L_x$. 

The spectral curve $X_s$ is smooth if and only if the divisor of zeros
of $s$ is reduced. It is an essential feature of the integrable
systems picture for the Hitchin map that its fibre over such a generic
$s$ is an abelian variety. In the present case this abelian variety is
the Prym variety of the double cover $X_s \to X$, i.e.\ the part of
the Jacobian of $X_s$ which is anti-invariant under the natural
involution induced by exchanging the sheets of the double cover.

At the other extreme, the most special fibre of the Hitchin map is the
fibre over zero, which is known as the nilpotent cone. The nilpotent
cone is singular and has a complicated structure: it contains the
fixed point locus of the natural $\C^*$-action on
$\mathcal{M}^{\Lambda}_L$ (given by multiplication on the Higgs field)
and has an irreducible component for each component of this fixed
point locus. In particular, the nilpotent cone contains the moduli
space of bundles as the locus $\varphi=0$. Since the nilpotent cone
encodes the topology of the moduli space, it has been studied
extensively (see e.g.\ \cite{biswas-gothen-logares:2009} and
references therein).

Our main goal in this paper is to give a description of the remaining
special fibres of the Hitchin map, corresponding to the case of $s
= \det(\varphi)$ being non-zero and having at least one multiple zero.
Here there are two cases to distinguish, corresponding to whether the
spectral curve is irreducible or not. 

When the spectral case is irreducible, we use the correspondence
between Higgs pairs on $X$ and rank one torsion free sheaves on $X_s$
to show that the fibre of the Hitchin map is essentially the
compactification by rank $1$ torsion free sheaves of the Prym of the
double cover $X_s \to X$ (see Theorem~\ref{fibre of Hitchin map} below
for the detailed statement). In order to prove this, we make use of
the the compactification of the Jacobian of $X_s$ by the parabolic
modules of Cook \cite{cook:1993,cook:1998} in order to describe the
fibre correctly. One advantage of this compactification is that it
fibers over the Jacobian of the normalization of $X_s$, as opposed to
the compactification by rank one torsion free sheaves.

In the case of reducible spectral curve, we resort to a direct
description of the fibre as a stratified space
(Theorem~\ref{prop:fibre when spectral curve is reducible}
below). This approach seems to us simpler than attempting to use the
spectral curve approach.

All together, our results allow us to prove the following main result
(Theorem~\ref{thm:main}). Again, the description of the fibre of the
Hitchin map via parabolic modules is an essential ingredient in the
proof.

\begin{theorem*}
  Assume that $\deg(L)>0$.  For any $s\in H^0(X,L^2)$, the fibre of
  the Hitchin map $\cH:\cM^\Lambda_L\to H^0(X,L^2)$ is
  connected. Moreover, if $s \neq 0$, the dimension of the fibre is
  $\dim(\cH^{-1}(s))=d_L+g-1$.
\end{theorem*}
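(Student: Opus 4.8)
The strategy is to assemble the theorem from the three structural results already announced: the nilpotent cone case ($s=0$), the irreducible spectral curve case (Theorem~\ref{fibre of Hitchin map}), and the reducible spectral curve case (Theorem~\ref{prop:fibre when spectral curve is reducible}). For connectedness, I would split on the value of $s$. If $s=0$, the fibre is the nilpotent cone; here I would invoke the standard fact that the nilpotent cone is connected (for instance, because each irreducible component meets the locus $\varphi=0$, i.e.\ the moduli space of semistable bundles of fixed determinant, which is itself connected — this is a consequence of the $\C^*$-action and properness, via the Bialynicki-Birula / Morse-theoretic description used in \cite{biswas-gothen-logares:2009}). If $s\neq 0$ and $X_s$ is irreducible, Theorem~\ref{fibre of Hitchin map} identifies the fibre with a compactified Prym variety; since the Prym is connected and its compactification by parabolic modules fibres over the Jacobian of the normalization (as emphasized in the introduction), connectedness is immediate. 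If $s\neq 0$ and $X_s$ is reducible, I would read off connectedness from the explicit stratified description in Theorem~\ref{prop:fibre when spectral curve is reducible}, checking that the closure relations among strata link every stratum to a single distinguished one.

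For the dimension statement (with $s\neq 0$), the key input is that the generic fibre of the Hitchin map — over an $s$ whose divisor of zeros is reduced, so that $X_s$ is smooth — is the Prym variety of $X_s\to X$, whose dimension is computed from the genus of $X_s$ via Riemann-Hurwitz: with $\deg(L)=d_L$ one gets $g(X_s)=2g-1+d_L$ (the branch divisor of the double cover is the zero divisor of $s\in H^0(X,L^2)$, of degree $2d_L$), and the Prym has dimension $g(X_s)-g = d_L+g-1$. The claim is that this dimension is constant across \emph{all} nonzero fibres. For the irreducible case this follows from Theorem~\ref{fibre of Hitchin map}: the compactified Prym of the (possibly singular) double cover $X_s\to X$ has the same dimension as the Prym of the smooth cover, since the parabolic module compactification fibres over $\Jac$ of the normalization with fibres of complementary dimension, and the normalization again has genus making the total dimension $d_L+g-1$. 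For the reducible case the dimension would be extracted directly from the dimensions of the strata in Theorem~\ref{prop:fibre when spectral curve is reducible}, the top-dimensional stratum having dimension $d_L+g-1$.

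**Main obstacle.** The delicate point is not any single dimension count but rather ensuring the \emph{reducible} case genuinely produces the same dimension and connectedness as the irreducible one — a priori a reducible spectral curve (two sheets swapping to a disconnected or nodally-glued curve) could behave differently, and the stability condition cuts the fibre in a way that is not simply a compactified Prym. I expect the heart of the work to be verifying, from the stratified description, that (i) the strata fit together into a connected space and (ii) the maximal stratum has dimension exactly $d_L+g-1$, matching the generic fibre. A secondary subtlety is the hypothesis $\deg(L)>0$: this should guarantee that $L^2$ has enough sections for nonzero $s$ to exist and, more importantly, that stability behaves uniformly (so that e.g.\ the comparison with the Prym is not spoiled by strictly semistable loci); I would need to confirm that the cited structural theorems are indeed stated under this hypothesis and that no further genus restriction creeps in. Once the three cases are in hand, the assembly into the stated theorem is formal.
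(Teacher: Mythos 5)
Your outline for $s\neq 0$ follows the paper's decomposition (irreducible versus reducible spectral curve, with the dimension count done stratum by stratum), but it contains a genuine gap precisely where the paper has to work hardest: in the irreducible singular case you assert that ``since the Prym is connected \dots connectedness is immediate''. This is false in general. When every zero of $s$ has even multiplicity (but $s$ is not a square of a section of $L$, so $X_s$ is still irreducible), the normalized cover $\overline\pi\colon\widetilde X_s\to X$ is \emph{unramified}, and by Mumford's result (Remark~\ref{rem:prym-components}) $\Prym_{\overline\pi}(\widetilde X_s)$ has two connected components; consequently $P_\pi$, and a priori its closure $\overline{P_\pi}$, could be disconnected. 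Moreover, the object that actually occurs as the fibre in Theorem~\ref{fibre of Hitchin map} is the closure $\overline{\mathcal T_\pi}$ inside the compactified Jacobian, and the compactified Jacobian does \emph{not} fibre over $\Jac(\widetilde X_s)$ (this is exactly the point of Example~\ref{intersection of fibres}); only the parabolic-module space $\mathrm{PMod}$ fibres, and even that fibration has disconnected image in the Prym direction in the unramified case. The heart of the paper's Theorem~\ref{thm: fibre connected} is to exploit this failure of fibering: one exhibits explicit parabolic modules lying over the two different components of $\Prym_{\overline\pi}(\widetilde X_s)$ whose images under $\tau$ coincide in $\overline{P_\pi}$ (Cases 1 and 2, built on Example~\ref{intersection of fibres}), so that the compactification glues the two components together. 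Your proposal skips this entirely. Similarly, in the reducible case the paper does not use closure relations among strata but the eigenbundle-swap identification $p(\mathcal F(D,m_1))=p(\mathcal F(D,m_2))$ with $m_2=d-\deg(D)-m_1$ ((2) of Theorem~\ref{prop:fibre when spectral curve is reducible}) to make every piece $p(\mathcal E(m))$ meet $p(\mathcal E([d/2]))$; your ``closure relations'' sketch would need to be made precise, though it is in the right spirit.

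For $s=0$ your route is genuinely different from the paper's and is also under-justified. The paper does not prove connectedness of the nilpotent cone intrinsically: it deduces it from properness of $\cH$, connectedness of the fibres over $s\neq 0$, and connectedness of the whole moduli space (Proposition~\ref{prop:moduli-connected}), via an open-cover argument around $0\in H^0(X,L^2)$. Your alternative --- ``each irreducible component of the nilpotent cone meets the locus $\varphi=0$'' --- is not a statement you can simply cite: the non-trivial components of the nilpotent cone are closures of strata of pairs $(V,\varphi)$ with $\varphi$ nilpotent and nonzero, and to see that such a closure contains points with $\varphi=0$ one needs points of the stratum whose underlying bundle $V$ is itself semistable (otherwise the $t\to 0$ limit leaves the moduli space), i.e.\ an existence result for stable extensions in every stratum, for arbitrary $\deg(L)>0$. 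That may well be true, but it requires an argument the proposal does not supply, whereas the paper's properness argument needs only facts already established. The dimension part of your proposal (generic Prym dimension $d_L+g-1$, matched in the irreducible case through the exact sequence for $P_\pi$ and in the reducible case through the stratum dimensions) agrees with the paper and is fine.
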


We should point out that if $(2,d)=1$, the degree of $L$ is greater
than or equal to $2g-2$ and $L^2 \neq K^2$, then the moduli space is
known to be irreducible and then the connectedness of the generic
fibre can be used to prove connectedness of all fibres (see
Proposition~\ref{prop:big-L-connected-fibres} below).  Thus, it is
important to notice that our results apply independently of the degree
of the twisting line bundle $L$ (as opposed to most other studies of
the moduli space of twisted Higgs pairs carried out so far).

We also point out that Frenkel and Witten
\cite[Sec.~5.2.2]{frenkel-witten:2007} proved the connectedness of the
fibre in the particular case of irreducible spectral curve having only
simple nodes as singularities. Their argument makes implicit use of
the parabolic line bundles introduced by Bhosle in \cite{bhosle:1992}.
In the case of simple nodes parabolic line bundles are the same as
parabolic modules but, in general, they are different objects.

It seems quite likely that our relatively explicit description of the
fibre could be useful as a tool for further study of
geometric and topological properties of the singular fibre and we hope
to come back to this question on another occasion.

To finish this introduction, we give an outline of the organization of
the paper. In Sections~\ref{sec:higgs-pairs} and
\ref{sec:spectral-curve} we give some background on Higgs pairs and
the spectral curve and in
Section~\ref{sec:jacobian-irreducible-spectral-curve} we review the
theory of line bundles and the Prym variety in the case of singular
irreducible spectral curve. Then, in Section~\ref{Compactified jacobian
  and GPLB}, we introduce the parabolic modules and give some results,
which are used in Section~\ref{The singular case} to describe the
fibre of the Hitchin map in the case of irreducible spectral curve. In
Section~\ref{The non-integral case} we deal with the case of reducible
spectral curve. Finally in Section~\ref{thm:main} we put everything
together to prove our main theorem.

\subsection*{Acknowledgments}

We thank U. Bhosle, N. Hitchin and T. Pantev for useful
discussions. We also thank an anonymous referee for helpful comments
and in particular for pointing out that the connectedness result could
be extended to also cover the fibre of the Hitchin map over zero.

\section{Higgs pairs}
\label{sec:higgs-pairs}

Let $X$ be a compact Riemann surface of genus $g\geq 2$ and let $K =
K_X$ be the canonical bundle of $X$. Denote by
$\Jac(X) = \Pic^0(X)$ the Jacobian of $X$, which parametrizes degree
$0$ holomorphic line bundles on $X$; we also write $\Jac^d(X) =
\Pic^d(X)$.

\begin{definition}
  Let $L$ be a fixed holomorphic line bundle over $X$.  An
  \emph{$L$-twisted Higgs pair} of type $(n,d)$ over $X$ is a pair
  $(V,\varphi)$, where $V$ is a holomorphic vector bundle over $X$,
  with $\rk(V)=n$ and $\deg(V)=d$, and $\varphi$ is a global
  holomorphic section of $\End(V)\otimes L$, called the \emph{Higgs
    field}. A \emph{Higgs bundle} is a $K$-twisted Higgs pair.
\end{definition}

Throughout the paper we shall make the following assumption (see
Remark~\ref{rem:deg-L-zero} for comments on the case $\deg(L)=0$).

\begin{assumption}
  The line bundle $L \to X$ satisfies $\deg(L) >0$.
\end{assumption}

\begin{definition} Two $L$-twisted Higgs pairs $(V,\varphi)$ and
  $(V',\varphi')$ are \emph{isomorphic} if there is a holomorphic
  isomorphism $f:V\to V'$ such that $\varphi'f=(f\otimes 1_L)\varphi$.
\end{definition}

Using GIT, Nitsure \cite{nitsure:1991} constructed the moduli space
$\cM_L(n,d)$ of $S$-equivalence classes of rank $n$ and degree $d$
semistable $L$-twisted Higgs pairs over $X$. In this paper we shall
only be concerned with the case $n=2$.  In this case, the definition
of stability of $L$-twisted Higgs pairs takes the following form.

\begin{definition}
An $L$-twisted Higgs pair $(V,\varphi)$ of type $(2,d)$ is:
\begin{itemize}
\item \emph{stable} if $\deg(N)<d/2$ for any line bundle $N\subset V$
  such that $\varphi(N)\subset NL$.
 \item \emph{semistable} if $\deg(N)\leq d/2$ for any line bundle
   $N\subset V$ such that $\varphi(N)\subset NL$.
 \item \emph{polystable} if is semistable and for any line bundle
   $N_1\subset V$ such that $\varphi(N_1)\subset N_1L$ and $\deg(N_1)
   = d/2$, there is another line bundle $N_2\subset V$ such that
   $\varphi(N_2)\subset N_2L$ and $V = N_1 \oplus N_2$.
\end{itemize}
\end{definition}

The notion of $S$-equivalence for Higgs pairs is defined analogously
to the case of vector bundles and, as in that case, each
$S$-equivalence class contains a unique polystable representative.

The following is known from
\cite[Theorem~1.2]{biswas-gothen-logares:2009} (see also
Nitsure \cite[Proposition~7.4]{nitsure:1991} and Simpson
\cite[Theorem~11.1]{simpson:1995}): if $(n,d)=1$, $\deg(L) \geq 2g-2$, and either
$L = K$ or $L^n \neq K^n$, then the moduli space $\cM_L(n,d)$ is
smooth (and irreducible) with
\begin{displaymath}
  \dim \cM_L(n,d) = n^2\deg(L) + 1+ \dim H^1(X,L).
\end{displaymath}

There is a map 
\begin{align*}
  p:\cM_L(n,d)&\longrightarrow\Jac^d(X)\times H^0(X,L)\\
  (V,\varphi)&\longmapsto(\Lambda^nV,\tr(\varphi)).
\end{align*}
For each fixed $\Lambda \in \Jac^d(X)$, we define the
\emph{moduli space of $L$-twisted Higgs pairs of type $(n,d)$ with
  fixed determinant $\Lambda$} to be the subvariety of $\cM_L(n,d)$
given by the fibre of $p$ over $(\Lambda,0)$:
\begin{displaymath}
\cM_L^\Lambda(n,d)=p^{-1}(\Lambda,0).
\end{displaymath}
Accordingly, we say that $(V,\varphi)$ is an \emph{$L$-twisted Higgs pair with fixed
determinant $\Lambda$}, if $\Lambda^2V \cong \Lambda$ and
$\tr(\varphi)=0$.

Nitsure \cite[Theorem~7.5]{nitsure:1991} proved that in rank two, the
moduli space is connected for any $L$. His proof goes through
unchanged in the fixed determinant case, so we have the following
result.

\begin{proposition}
  \label{prop:moduli-connected}
  For any $L$, $d$ and $\Lambda$, the moduli spaces $\cM_L(2,d)$ and $\cM_L^\Lambda(2,d)$
  are connected. \qed
\end{proposition}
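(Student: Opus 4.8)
My plan is to prove this by the standard argument, using the $\C^*$-action on the moduli space together with the properness of the Hitchin map; I treat $\cM=\cM_L(2,d)$, the fixed-determinant case being word-for-word the same. The group $\C^*$ acts on $\cM$ by $t\cdot(V,\varphi)=(V,t\varphi)$, preserving the fixed-determinant locus, and $\cH(t\cdot(V,\varphi))=t^2\det(\varphi)$, so the closure of each $\C^*$-orbit lies in a line through the origin of $H^0(X,L^2)$. Since $\cH$ is proper and $t\mapsto t^2\det(\varphi)$ extends to a morphism $\mathbb{A}^1\to H^0(X,L^2)$, the valuative criterion applied to $\cH$ shows that $\lim_{t\to0}t\cdot(V,\varphi)$ exists in $\cM$ for every $(V,\varphi)$, lies in the nilpotent cone $\cH^{-1}(0)$, and is $\C^*$-fixed. (If $H^0(X,L^2)=0$ then $\cM=\cH^{-1}(0)$ is complete and this is automatic.) Hence the closure of the $\C^*$-orbit of any point of $\cM$ is a connected set joining that point to a $\C^*$-fixed point, and it suffices to show that all $\C^*$-fixed points lie in one connected component of $\cM$.

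Next I would identify the fixed locus. It contains the component $F_0=\{(V,0)\}$, i.e.\ the moduli space of semistable rank-$2$ bundles (of fixed determinant in the other case), which is connected by the classical theory of bundles on a curve; so it remains to join every other fixed point to $F_0$. Any $\C^*$-fixed point with $\varphi\neq0$ is, after passing to the associated graded, of the form $p=(L_1\oplus L_2,\varphi_0)$, where $\varphi_0$ is the nilpotent endomorphism determined by a nonzero map $\psi\colon L_1\to L_2\otimes L$ and $m:=\deg(L_1)-\deg(L_2)$ satisfies $1\le m\le\deg(L)$; configurations with $\deg(L_1)=\deg(L_2)$ are $S$-equivalent to points of $F_0$, and in the fixed-determinant case one has $L_1\otimes L_2\cong\Lambda$.

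Now fix such a $p$. For $e\in\Ext^1(L_1,L_2)$ let $V_e$ be the associated extension of $L_1$ by $L_2$ and let $\varphi_e$ be the composite $V_e\twoheadrightarrow L_1\xrightarrow{\psi}L_2\otimes L\hookrightarrow V_e\otimes L$; these fit together into a morphism $e\mapsto(V_e,\varphi_e)$ from the affine space $\Ext^1(L_1,L_2)$ to $\cM$ sending $0$ to $p$. The key point is that $(V_e,\varphi_e)$ is a \emph{stable} Higgs pair for \emph{every} $e$: if $M\subset V_e$ were a $\varphi_e$-invariant sub-line-bundle not contained in $L_2$, then $M\to L_1$ would be injective, so $M\cap L_2=0$, whence $\varphi_e(M)\subset(M\otimes L)\cap(L_2\otimes L)=0$, forcing the injective composite $\psi\circ(M\to L_1)$ to vanish — impossible; hence every $\varphi_e$-invariant sub-line-bundle lies in $L_2$ and has degree $(d-m)/2<d/2$. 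On the other hand, a dimension count shows that the locus of $e$ for which $V_e$ acquires a sub-line-bundle of degree $>d/2$ is a proper subset of $\Ext^1(L_1,L_2)$ (its dimension is at most $m-1$, whereas $\dim\Ext^1(L_1,L_2)=m+g-1$), so for generic $e$ the bundle $V_e$ is semistable; for such an $e$ the orbit map $t\mapsto(V_e,t\varphi_e)$ extends over $t=0$ with value $(V_e,0)\in F_0$. Gluing the image of $\Ext^1(L_1,L_2)$ to the closure of the $\C^*$-orbit of $(V_e,\varphi_e)$ yields a connected subset of $\cM$ joining $p$ to $F_0$; therefore $p$ lies in the connected component of $F_0$. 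This proves connectedness of $\cM_L(2,d)$ and, by the identical argument, of $\cM_L^\Lambda(2,d)$.

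The step I expect to be the main obstacle is the third paragraph: one must check that the Higgs field $\varphi_e$ forces stability of $(V_e,\varphi_e)$ regardless of whether the underlying bundle $V_e$ is semistable, and one must bound the dimension of the locus of extension classes $e$ for which $V_e$ itself fails to be semistable. It is here that the hypotheses $g\ge2$ and $\deg(L)>0$ enter, through the comparison of that dimension with $\dim\Ext^1(L_1,L_2)$.
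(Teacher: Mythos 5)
Your argument is correct, but it is not the route the paper takes: the paper gives no proof at all, simply invoking Nitsure's Theorem~7.5 (connectedness of $\cM_L(2,d)$ for arbitrary $L$) and observing that his proof goes through unchanged with fixed determinant. What you have written is, in effect, a reconstruction of that standard proof: the $\C^*$-action, the existence of limits $\lim_{t\to 0}t\cdot(V,\varphi)$ via properness of $\cH$ and the valuative criterion, the identification of the fixed points with $\varphi\neq 0$ as nilpotent pairs $(L_1\oplus L_2,\varphi_0)$ with $1\le \deg L_1-\deg L_2\le \deg L$, and the bridge from such a fixed point to the bundle locus $F_0$ through the family of extensions $e\mapsto (V_e,\varphi_e)$ over $\Ext^1(L_1,L_2)$. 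Your two key verifications are sound: the invariant-subbundle argument does show $(V_e,\varphi_e)$ is stable for every $e$ (any invariant line subbundle must sit inside $L_2$, of degree $(d-m)/2<d/2$), and the unstable locus is indeed small, being the union over effective divisors $A$ with $\deg A<m/2$ of the kernels of $\Ext^1(L_1,L_2)\to\Ext^1(L_1(-A),L_2)$, each of dimension $\deg A$, hence of total dimension at most $m-1<m+g-1$. Two caveats worth recording: the properness of $\cH$ and the classification of the $\C^*$-fixed points are themselves nontrivial inputs (Nitsure, Simpson), so your proof is self-contained only modulo those, and the fixed-determinant case additionally uses that $\varphi_e$ is nilpotent (hence traceless) and $\det V_e\cong L_1L_2\cong\Lambda$, together with connectedness of the fixed-determinant moduli of semistable bundles — all true, and this is exactly the content of the paper's remark that Nitsure's proof carries over verbatim. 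The trade-off is the obvious one: the paper's citation is economical, while your version makes the mechanism (and its insensitivity to $\deg L$ and to fixing the determinant) explicit.
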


Henceforth, we shall assume that $n=2$ and also drop $n$ and $d$ from the
notation, writing simply $\cM_L$ and $\cM_L^\Lambda$ for the rank two
moduli spaces. 

The Hitchin map is defined by taking the
characteristic polynomial of $\varphi$. Thus, on the rank two, fixed
determinant moduli space, it is given as follows:

\begin{definition}
  The \emph{Hitchin map} on $\cM_L^\Lambda$ is the map
$$\begin{array}{rccl}
  \cH:&\cM_L^\Lambda & \longrightarrow & H^0(X,L^2)\\
  &(V,\varphi) & \longmapsto & \det(\varphi).
\end{array}$$ 
\end{definition}

\section{The spectral curve}
\label{sec:spectral-curve}

In this section we recall the construction and basic properties of the
spectral curve; see
\cite{beauville-narasimhan-ramanan:1989,hitchin:1987,hitchin:1987b}
for details. The following notation will be used for the
remainder of the paper: we denote the degree of the line bundle $L$ by
$$d_L=\deg(L)>0$$
and write
$$D_s=\divisor(s)\in\mathrm{Sym}^{2d_L}(X)$$
for the divisor of a section $s \in H^0(X,L^2)$.

\subsection{The spectral curve and Higgs pairs}

We begin by reviewing the construction of the spectral curve $X_s$
associated to a section $s\in H^0(X,L^2)$. Consider the complex
surface $T$ given by the total space of the line bundle $L$, and let
$\pi:T\to X$ be the projection. The pullback $\pi^*L$ of $L$ to its
total space has a tautological section
$$\lambda\in H^0(T,\pi^*L)$$ defined by $\lambda(x)=x$.

\begin{definition}
  Let $s\in H^0(X,L^2)$. The \emph{spectral curve} $X_s$ associated to
  $s$ is the zero scheme in the surface $T$ of the section
  $$\lambda^2+\pi^*s\in H^0(T,\pi^*L^2).$$
\end{definition}

The following simple observations (cf. Section 3 of
\cite{beauville-narasimhan-ramanan:1989}) will be of relevance later.

\begin{remark}
  \label{rem:singular-smooth-X_s}
  The spectral curve $X_s$ is always reduced, but it may be singular
  and reducible. In fact, it is smooth if and only if $s$ only has
  simple zeros and it is irreducible if and only if $s$ is not the
  square of a section of $L$.
  Note that if $D_s=2\widetilde D$ for some divisor $\widetilde D$,
  then $L\cong\mathcal O(\widetilde D)\otimes N$ where $N$ is a
  $2$-torsion point of the Jacobian and, in this situation, $X_s$ is
  reducible if and only if $N = \mathcal{O}$.
\end{remark}

In the present setting, the fundamental result on the relation between
the spectral curve and Higgs pairs can be formulated as follows
(cf.~\cite[Proposition 3.6]{beauville-narasimhan-ramanan:1989} and \cite{hitchin:1987b}):

\begin{theorem}
  \label{BNR}
  Let $s\in H^0(X,L^2)$ be such that the spectral curve $X_s$ is
  irreducible. Then there is a bijective correspondence between
  isomorphism classes of torsion-free sheaves of rank $1$ on $X_s$ and
  isomorphism classes of $L$-twisted Higgs pairs $(V,\varphi)$ of rank
  $2$, where $\varphi:V\to V\otimes L$ is a homomorphism with
  $\det(\varphi)=s$ and $\tr(\varphi)=0$.  The correspondence is given
  by associating to such a sheaf $\mathcal F$ on $X_s$, the sheaf
  $\pi_*\mathcal F$ on $X$ and the homomorphism $\pi_*\mathcal
  F\to\pi_*\mathcal F\otimes L\cong\pi_*(\mathcal F\otimes\pi^*L)$
  given by multiplication by the canonical section $\lambda\in
  H^0(X_s,\pi^*L)$.
\end{theorem}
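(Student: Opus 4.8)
The plan is to follow the classical spectral correspondence of Beauville--Narasimhan--Ramanan, adapted to the $L$-twisted, rank $2$, traceless, fixed-determinant setting. The key observation is that the spectral curve $X_s \subset T$ is contained in the total space of a line bundle, hence affine over $X$: the projection $\pi \colon X_s \to X$ is finite of degree $2$, so $\pi_* \mathcal{O}_{X_s}$ is a sheaf of $\mathcal{O}_X$-algebras which, by the definition of $X_s$ as the zero scheme of $\lambda^2 + \pi^* s$, is isomorphic as an $\mathcal{O}_X$-module to $\mathcal{O}_X \oplus L^{-1}$, with algebra structure determined by $\lambda^2 = -s$. The point is that giving a quasicoherent sheaf on $X_s$ is the same as giving a quasicoherent sheaf $V$ on $X$ together with a $\pi_*\mathcal{O}_{X_s}$-module structure, and the latter is the same as specifying the action of $\lambda$, i.e.\ an $\mathcal{O}_X$-linear map $\varphi \colon V \to V \otimes L$ (multiplication by $\lambda$ raises degree in the fibre direction by one, hence tensors by $L$) satisfying the single relation coming from $\lambda^2 + \pi^* s = 0$, namely $\varphi^2 + s\cdot \mathrm{Id}_V = 0$ as a map $V \to V \otimes L^2$.

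First I would make this dictionary precise in both directions. Given a torsion-free rank $1$ sheaf $\mathcal{F}$ on $X_s$, set $V = \pi_* \mathcal{F}$; since $\pi$ is finite and $X_s$ is reduced of pure dimension $1$ (Remark~\ref{rem:singular-smooth-X_s}), $V$ is torsion-free on $X$, hence locally free, of rank $2$ (generically $X_s \to X$ is \'etale of degree $2$, and rank is constant for torsion-free sheaves on a smooth curve). Multiplication by $\lambda \in H^0(X_s, \pi^* L)$ gives $\mathcal{F} \to \mathcal{F} \otimes \pi^* L$, and pushing forward yields $\varphi \colon V \to V \otimes L$ (using $\pi_*(\mathcal{F}\otimes \pi^* L)\cong \pi_* \mathcal{F} \otimes L$ by the projection formula). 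Conversely, given $(V,\varphi)$ with $\varphi^2 = -s\cdot\mathrm{Id}$, the Higgs field equips $V$ with the structure of a module over $\pi_*\mathcal{O}_{X_s} = \mathcal{O}_X[\lambda]/(\lambda^2+s)$, which, because $\pi$ is affine, is the same as a sheaf $\mathcal{F}$ on $X_s$ with $\pi_* \mathcal{F} = V$. These two constructions are visibly mutually inverse and respect isomorphisms, so the content of the theorem reduces to two verifications: (a) the relation $\varphi^2 + s\cdot\mathrm{Id}_V = 0$ is equivalent, for a rank $2$ bundle, to $\det(\varphi) = s$ and $\tr(\varphi) = 0$; and (b) the sheaf $\mathcal{F}$ produced from a Higgs pair is torsion-free of rank $1$ precisely when the correspondence is set up on an irreducible $X_s$.

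For (a), the Cayley--Hamilton theorem gives $\varphi^2 - \tr(\varphi)\,\varphi + \det(\varphi)\,\mathrm{Id}_V = 0$ for any rank $2$ endomorphism; conversely, if $\varphi$ satisfies a monic quadratic $\varphi^2 + s\,\mathrm{Id} = 0$ with no linear term, then at a generic point the two eigenvalues are $\pm\sqrt{-s}$, forcing $\tr(\varphi) = 0$ and $\det(\varphi) = s$ by continuity (density of the locus where $s \neq 0$). For (b), which is where the irreducibility hypothesis on $X_s$ is essential, the argument is: $\pi_*$ of any torsion-free sheaf on $X_s$ is torsion-free on $X$, and conversely $\mathcal{F}$ is torsion-free on $X_s$ iff $V = \pi_* \mathcal{F}$ is torsion-free on $X$ (torsion would be supported at points and would push forward to torsion) — so $\mathcal{F}$ is automatically torsion-free; the rank $1$ statement uses that $X_s$ is \emph{integral} (irreducible and reduced), so ``rank'' is well defined as the length of $\mathcal{F}$ at the generic point of $X_s$, and this length is $1$ exactly because $\dim_{\mathbb{C}(X)} V \otimes \mathbb{C}(X) = 2 = \deg(\pi)$ and $\mathbb{C}(X_s)/\mathbb{C}(X)$ is a field extension of degree $2$. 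The main obstacle is precisely this last bookkeeping in the singular case: one must check that the module-theoretic reformulation over the possibly non-normal ring $\pi_*\mathcal{O}_{X_s}$ really does biject torsion-free rank $1$ sheaves with all such $(V,\varphi)$ — in particular that no condition like ``$V$ arises from a \emph{line} bundle on the normalization'' is secretly being imposed — and here one invokes that, over an integral curve, the category of torsion-free rank $1$ sheaves is closed under the operations used and matches exactly the finitely generated $\pi_*\mathcal{O}_{X_s}$-modules that are torsion-free of generic rank $1$. I would handle this by working locally: over an affine open $U \subset X$, everything becomes a statement about the ring $A = \mathcal{O}_X(U)$, the rank $2$ free module, the quadratic extension $B = A[\lambda]/(\lambda^2 + s)$, and its finitely generated torsion-free modules, which is the content of BNR's Proposition 3.6 and requires nothing beyond commutative algebra of curves.
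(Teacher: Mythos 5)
The paper does not actually prove Theorem~\ref{BNR}: it states it as a known result, citing Proposition~3.6 of Beauville--Narasimhan--Ramanan and Hitchin, so there is no internal proof to compare against. What you have written is essentially a reconstruction of the standard BNR argument behind that citation: $\pi$ is finite, so sheaves on $X_s$ are the same as $\pi_*\mathcal O_{X_s}$-modules; $\pi_*\mathcal O_{X_s}\cong\mathcal O_X\oplus L^{-1}$ with $\lambda^2=-s$, so such a module structure on a rank~$2$ bundle $V$ is the same as $\varphi\colon V\to V\otimes L$ with $\varphi^2+s\,\Id=0$; Cayley--Hamilton and the projection formula then give the two mutually inverse constructions, with torsion-freeness and rank~$1$ checked at the generic point using that $X_s$ is integral, so $K(X_s)/K(X)$ is a degree-$2$ field extension. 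This is correct in outline and is the same route the cited sources take.

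One step is stated too strongly and deserves care: in your point (a), the converse implication ``$\varphi^2+s\,\Id=0$ with no linear term forces $\tr(\varphi)=0$ and $\det(\varphi)=s$'' is false for a general rank-$2$ endomorphism, since $\varphi=c\,\Id$ with $c\in H^0(X,L)$, $c^2=-s$, satisfies the quadratic relation but has $\tr(\varphi)=2c\neq 0$ and $\det(\varphi)=-s$; at a point both eigenvalues may equal $+\sqrt{-s}$. This degenerate case is excluded exactly by the hypothesis that $X_s$ is irreducible (such a $c$ would make $\lambda^2+\pi^*s$ split, cf.\ Remark~\ref{rem:singular-smooth-X_s}; equivalently, if $\tr\varphi\neq0$ then Cayley--Hamilton gives $\tr(\varphi)\,\varphi=(\det\varphi-s)\Id$, so $\varphi$ is generically scalar and $-s$ is a square). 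Cleaner still, you never need this converse as a general statement: in the direction $\mathcal F\mapsto(\pi_*\mathcal F,\varphi)$, compute $\tr\varphi$ and $\det\varphi$ at the generic point of $X$, where $\varphi$ is multiplication by $\lambda$ on $\mathcal F_{\eta}\cong K(X_s)$, a one-dimensional $K(X_s)$-space; since $K(X_s)=K(X)[\lambda]/(\lambda^2+s)$ is a field of degree $2$ over $K(X)$, the characteristic polynomial of this multiplication is exactly $x^2+s$, so $\tr\varphi=0$ and $\det\varphi=s$ hold generically, hence everywhere. With that adjustment your argument is complete and matches the standard proof the paper relies on.
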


This theorem will be the main tool to describe the fibre of $\cH$ over
a non-zero section $s\in H^0(X,L^2)$.

\subsection{The generic fibre of the Hitchin map}

The material of this subsection is well-known but we include it for
completeness. Let $s\in H^0(X,L^2)$ be a section with simple
zeros. Then $X_s$ is smooth and we have a double cover 
\begin{equation}\label{eq: spectral curve double cover of X}
\pi:X_s\longrightarrow X,
\end{equation}
of $X$, ramified over $D_s$.
Since $X_s$ is smooth, the torsion free sheaf $\mathcal F$ on $X_s$,
which corresponds to $(V,\varphi)$ under Theorem~\ref{BNR}, is in fact
a line bundle $F$.

By the Riemann--Hurwitz formula, the genus of $X_s$ is
\begin{equation}\label{genus spectral curve}
g(X_s)=2g-1+d_L.
\end{equation}
Moreover, since $\pi$ has discrete fibres, we have $H^i(X_s,\cF)\cong
H^i(X,\pi_*\cF)$, $i=0,1$, for any coherent sheaf $\cF$ on
$X_s$. Hence, from the Riemann-Roch theorem and from 
(\ref{genus spectral curve}), 
it follows that $$\deg(\pi_*\cF)=\deg(\cF)-d_L.$$ 

Let $\mathcal T_{\pi}\subset \Jac^{d+d_L}(X_s)$ be the space of line
bundles on $X_s$ such that the determinant of their push forward under
$\pi$ is $\Lambda\in\Jac^d(X)$:
\begin{equation}\label{definition of Taupi}
\mathcal T_{\pi}=\{F\in\Jac^{d+d_L}(X_s)\suchthat\det(\pi_*F)\cong \Lambda\}. 
\end{equation}

We will now recall the definition of Prym variety. Consider a connected double cover 
\begin{equation}\label{eq:double cover of X}
p:X'\longrightarrow X
\end{equation} of $X$. Given a divisor $E=\sum n_iq_i$ in $X'$, the norm $\cNm_p(E)$ of $E$ is the divisor in $X$ defined by $\sum
n_ip(q_i)$. In terms of line bundles, this gives rise to the norm
map $$\cNm_p:\Jac(X')\longrightarrow\Jac(X)$$ which is a group
homomorphism.

\begin{definition}\label{def:Prym}
  The \emph{Prym variety of $X'$} 
  $$\Prym_p(X')=\{N\in\Jac(X')\suchthat
  \cNm_p(N)\cong\mathcal O_{X'}\}$$ is the kernel of the norm map
  with respect to $p$.
\end{definition}

\begin{remark}
\label{rem:prym-components}
 It is proved by Mumford in \cite{mumford:1971} that if $p:X'\to X$ in (\ref{eq:double cover of X}) is a ramified double cover, then $\Prym_p(X')$ is connected, whereas if $p$ is a non-trivial unramified cover, then $\Prym_p(X')$ has two connected components.
In the latter case, the Prym variety of $X'$ is, sometimes, defined just as the connected
  component of the kernel of $\cNm_p$ which contains the
  identity. It is, however, important to note that we define the Prym to be the
  full kernel of $\cNm_p$.
\end{remark}

We will be interested in $\Prym_\pi(X_s)$, where $\pi$ is the double cover (\ref{eq: spectral curve double cover of X}). For any line bundle $F$ in $X_s$, it is known
(see e.g.\ \cite{beauville-narasimhan-ramanan:1989}) that
\begin{equation}\label{det and norm}
\det(\pi_*F)\cong\cNm_{\pi}(F)\otimes L^{-1}
\end{equation}
so we can rewrite $\mathcal T_{\pi}$ defined in 
(\ref{definition of Taupi}) as
$$\mathcal T_{\pi}
=\{F\in\Jac^{d+d_L}(X_s)\suchthat\cNm_{\pi}(F)\cong\Lambda L\}.$$ For
each choice of a fixed element $F_0$ of $\mathcal T_{\pi}$, we
therefore obtain a
non-canonical isomorphism
\begin{equation}\label{isom T cong Prym}
  \begin{aligned}
    \mathcal T_{\pi}&\cong\Prym_{\pi}(X_s)\\
    F &\mapsto FF_0^{-1}.
  \end{aligned}
\end{equation}

\begin{proposition}\label{fibre smooth case = Prym}
  Let $s\in H^0(X,L^2)$ have simple zeros. Then there is an
  isomorphism between $\cH^{-1}(s)$ and $\mathcal T_{\pi}$.
\end{proposition}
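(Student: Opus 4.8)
The plan is to combine Theorem~\ref{BNR} with the genus and degree computations already recorded, and then to match up the determinant condition on $X$ with the norm condition on $X_s$. Since $s$ has simple zeros, $X_s$ is smooth and irreducible (Remark~\ref{rem:singular-smooth-X_s}), so every torsion-free rank one sheaf on $X_s$ is automatically a line bundle. Thus Theorem~\ref{BNR} already gives a bijection between points of $\cH^{-1}(s)$ (as a set) and isomorphism classes of line bundles $F$ on $X_s$ such that $\det(\pi_*F)\cong\Lambda$, with the extra constraint that $(\pi_*F, \text{``multiplication by }\lambda\text{''})$ be \emph{semistable}; but in the smooth spectral curve case this semistability is automatic, and I would recall (or quickly check via the standard destabilizing-subbundle-versus-subsheaf-of-$\cF$ argument, cf.\ \cite{beauville-narasimhan-ramanan:1989}) that the resulting Higgs pair is in fact stable, so there are no $S$-equivalence issues. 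First I would therefore identify the underlying set of $\cH^{-1}(s)$ with $\mathcal T_\pi$ via $F\mapsto(\pi_*F,\lambda\cdot)$, using (\ref{det and norm}) to see that the determinant constraint $\det(\pi_*F)\cong\Lambda$ is the same as $\cNm_\pi(F)\cong\Lambda L$, which is exactly the defining condition of $\mathcal T_\pi$; the degree $d+d_L$ is forced by the relation $\deg(\pi_*\cF)=\deg(\cF)-d_L$ together with $\deg(\pi_*F)=d$.

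Next I would upgrade this set-theoretic bijection to an isomorphism of varieties. The cleanest route is to invoke the fact that the spectral correspondence of Theorem~\ref{BNR} is not merely a bijection on points but is induced by the relative-$\Quot$ / relative-Picard construction over the family of spectral curves, hence is an isomorphism of schemes (this is the content of \cite{beauville-narasimhan-ramanan:1989}, which constructs the push-forward $\pi_*$ as a morphism and exhibits an inverse; alternatively one can use that both sides carry universal families and the bijection respects them). Restricting that isomorphism to the fibre over the fixed $s$ and then to the locus cut out by the determinant condition gives $\cH^{-1}(s)\cong\mathcal T_\pi$ as varieties. I would note $\mathcal T_\pi$ is a torsor under $\Prym_\pi(X_s)$ via (\ref{isom T cong Prym}), so this also recovers the classical statement that the generic Hitchin fibre is (a torsor under) the Prym.

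The main obstacle is the step that is easiest to gloss over: verifying that the bijection of Theorem~\ref{BNR} is compatible with the algebraic/scheme structures on both sides, rather than merely being a bijection of sets, and that the image of the determinant-fixing locus is precisely $\mathcal T_\pi$ with its reduced induced structure from $\Jac^{d+d_L}(X_s)$. One must be a little careful because $\cM_L^\Lambda$ is a GIT quotient and a priori one only controls its closed points; the resolution is that in the smooth spectral curve case the relevant Higgs pairs are stable, so $\cM_L^\Lambda$ is, locally near $\cH^{-1}(s)$, a fine moduli space, and the push-forward construction gives a genuine morphism $\mathcal T_\pi\to\cM_L^\Lambda$ whose image is $\cH^{-1}(s)$ and which is injective with injective differential (the latter by a deformation-theory computation identifying the tangent spaces on both sides with $H^1$ of the appropriate sheaf on $X_s$). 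The remaining routine points — that $\lambda^2 = -\pi^*s$ forces $\det\varphi = s$ and $\tr\varphi = 0$, that $\pi$ being finite gives the cohomology comparison used for the degree count — I would state without belaboring, citing the references above.
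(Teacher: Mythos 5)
Your proposal is correct and follows essentially the same route as the paper: invoke Theorem~\ref{BNR} (with smoothness of $X_s$ making all rank one torsion-free sheaves line bundles), match the determinant condition with $\mathcal T_\pi$ via (\ref{det and norm}), and verify that the resulting Higgs pairs are stable. The paper's stability check is the explicit eigenvalue argument (a $\varphi$-invariant line subbundle would force $\det(\varphi)=-\lambda^2$, contradicting smoothness/irreducibility of $X_s$), which you only gesture at via the standard invariant-subbundle-versus-subsheaf argument, and your extra care about upgrading the bijection to a scheme-theoretic isomorphism is a point the paper leaves implicit.
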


\proof 
Theorem~\ref{BNR} gives a bijection between $\mathcal T_{\pi}$ and
isomorphism classes of pairs Higgs pairs $(V,\varphi)$ with fixed
determinant $\Lambda$.
It remains to see that any $(V,\varphi)$ thus obtained is stable.
Suppose that there is a line subbundle $N\subset V=\pi_*F$ such that
$\varphi(N)\subset NL$. Then $N$ is an eigenbundle of $\varphi$. If
the corresponding eigenvalue is $\lambda\in H^0(X,L)$, then the other
eigenvalue is $-\lambda$ and hence $\det(\varphi)=-\lambda^2$. It
follows that $X_s$ is not smooth (in fact, not even irreducible), so
there is no such $N$. The Higgs pair $(V,\varphi)$ is therefore
stable.
\endproof

The Prym variety is a principally polarized abelian variety of
dimension $g(X_s) - g = d_L+g-1$. Hence it follows from
Proposition~\ref{fibre smooth case = Prym} and 
(\ref{isom T cong Prym}) that the dimension of the
generic fibre of $\cH$ over $s$ is
\begin{equation}\label{dimension of generic fibre}
\dim \mathcal{H}^{-1}(s) = d_L+g-1.
\end{equation}
In this generic case $s$ has simple zeros and the double cover (\ref{eq: spectral curve double cover of X}) is ramified over $D_s$, thus it follows from (\ref{isom T cong Prym}) that $\mathcal{H}^{-1}(s)$ is connected.

Under certain conditions on the degree of $L$, the connectedness of
the generic fibre of the Hitchin map actually implies that all fibres
are connected. To be precise, we have the following result.

\begin{proposition}
  \label{prop:big-L-connected-fibres}
  Assume that $(2,d)=1$ and that moreover $\deg(L) \geq 2g-2$ and $L^2
  \neq K^2$. Then the fibres of the Hitchin map are connected.
\end{proposition}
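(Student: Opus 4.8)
The plan is to exploit the well-known fact that, under the hypotheses $(2,d)=1$, $\deg(L)\geq 2g-2$ and $L^2\neq K^2$, the moduli space $\cM_L^\Lambda$ is smooth and irreducible (this is quoted in the excerpt from \cite{biswas-gothen-logares:2009}, and the fixed-determinant version follows in the standard way). The Hitchin map $\cH\colon\cM_L^\Lambda\to H^0(X,L^2)$ is proper and, since the total space is irreducible, it is surjective onto a closed irreducible subvariety of the affine space $H^0(X,L^2)$; in fact it is surjective since the generic fibre (a translate of a Prym) is nonempty. The strategy is therefore to run a Stein-factorization / connectedness-of-fibres argument for the proper map $\cH$ on an irreducible variety, using that the generic fibre is connected by the discussion following \eqref{dimension of generic fibre}.

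Concretely, I would proceed as follows. First, by properness of $\cH$, the Stein factorization gives $\cH = g\circ f$ where $f\colon \cM_L^\Lambda\to Y$ has connected fibres and $g\colon Y\to H^0(X,L^2)$ is finite. Since $\cM_L^\Lambda$ is irreducible, so is $Y$. Second, $Y$ is normal (Stein factorization of a map from a normal variety is normal), and $g\colon Y\to H^0(X,L^2)$ is finite; it is also birational, because over the dense open locus of sections $s$ with simple zeros the fibre $\cH^{-1}(s)\cong\mathcal T_\pi$ is connected (shown in the excerpt), so $g$ is one-to-one there. A finite birational morphism onto a normal variety is an isomorphism by Zariski's main theorem, so $g$ is an isomorphism, i.e.\ $Y = H^0(X,L^2)$ and $\cH = f$ has connected fibres. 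This proves connectedness of every fibre.

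The step I expect to require the most care is verifying the hypotheses needed to invoke this machinery in the fixed-determinant setting: namely that $\cM_L^\Lambda$ is irreducible (not merely smooth) under the stated conditions, and that $H^0(X,L^2)$ is normal (it is, being a vector space) so that Zariski's main theorem applies. The irreducibility of $\cM_L^\Lambda$ is the genuine input; once it is in hand, the Stein-factorization argument is formal. An alternative, if one wishes to avoid quoting irreducibility, is to use the fact that $\cM_L^\Lambda$ is smooth and that $H^0(X,L^2)$ is simply connected, together with Deligne's theorem that for a proper smooth morphism the fibres are connected provided one geometric fibre is, combined with the constancy of $\dim H^0$ of the fibre along the image; but the first route via Stein factorization and Zariski's main theorem is cleanest.
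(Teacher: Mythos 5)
Your proposal is correct and follows essentially the same route as the paper: Stein factorization of the proper map $\cH$, connectedness of the generic fibre (the Prym), irreducibility of the moduli space under the stated hypotheses via \cite{biswas-gothen-logares:2009}, and Zariski's main theorem applied to the finite part $g$ (the paper concludes that all fibres of $g$ are connected, which amounts to your statement that $g$ is an isomorphism onto its image). The only cosmetic difference is that the paper does not need the normality of $Y$, only that of the target $H^0(X,L^2)$.
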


\begin{proof}
  By Stein factorization
  (\cite[Corollary~III.11.15]{hartshorne:1977}), the Hitchin map
  factors as
  \begin{displaymath}
    \mathcal{H} = g \circ \mathcal{H}'\colon
    \mathcal{M}^{\Lambda}_L \xra{\mathcal{H}'} Y \xra{g} H^0(X,L^2),
  \end{displaymath}
  where $\mathcal{H}'$ has connected fibres and $g$ is a finite
  morphism.  Since the generic fibre of $\mathcal{H}$ is connected, so
  is the generic fibre of $g$. The hypotheses of the proposition
  guarantee that the moduli space is irreducible by
  \cite[Theorem~1.2]{biswas-gothen-logares:2009}. Hence
  $Y=\mathcal{H}'(\mathcal{M}^{\Lambda}_L)$ is irreducible and we can
  apply Zariski's main Theorem
  (\cite[Corollary~III.11.14]{hartshorne:1977}) and deduce that all
  fibres of $g$ are connected. The result follows.
\end{proof}

\begin{remark}
  \label{rem:higher-rank-big-L-connected}
  The previous proposition holds more generally for the moduli spaces
  $\mathcal{M}_L(n,d)$ and $\mathcal{M}^{\Lambda}_L(n,d)$, under the
  conditions $(n,d)=1$, $\deg(L) \geq 2g-2$ and $L^n \neq K^n$. This
  is because by the independent work of Bottacin (Theorems~4.7.2 and
  4.8.4 of \cite{bottacin:1995}) and Markman (Theorem~8.5 and
  Remark~8.7 of \cite{markman:1994}) the generic fibre of the
  Hitchin map is connected, and the moduli spaces are irreducible by
  \cite[Theorem~1.2]{biswas-gothen-logares:2009}.
\end{remark}


\section{Line bundles on an irreducible singular spectral curve}
\label{sec:jacobian-irreducible-spectral-curve}

We now move on to give a description of the fibre of the Hitchin map
$\mathcal{H}$ over $s\in H^0(X,L^2)$ when $X_s$ is singular. Thus we
consider the situation when the section $s$ has multiple zeros (cf.\
Remark~\ref{rem:singular-smooth-X_s}). There are two different cases
to be considered:
\begin{enumerate}
\item The singular spectral curve $X_s$ is irreducible. This case will
  be studied in the present section, Section~\ref{Compactified
    jacobian and GPLB} and Section~\ref{The singular case}.
\item The spectral curve $X_s$ is singular and has two irreducible
  components. This case will be studied in 
  Section~\ref{The non-integral case}.
\end{enumerate}
Recall that we write $D_s=\divisor(s)$. Note that case (2) occurs exactly when $D_s=2\widetilde D$ and
$\mathcal O(\widetilde D)\cong L$. Thus, in the remainder of this
section and in Sections~\ref{Compactified jacobian and GPLB} and
\ref{The singular case} we shall consider the case when $s\in
H^0(X,L^2)$ has a multiple zero, and moreover assume that if
$D_s=2\widetilde D$ for some $\widetilde{D}$, then the line bundle $L$
is not isomorphic to $\mathcal O(\widetilde D)$. The spectral curve
$X_s$ is hence singular and irreducible.

\subsection{The Jacobian}
\label{The spectral curve Xs for s with multiple zeros}

Suppose then that $q\in X$ is a zero of $s$ with multiplicity $m$.
Consider a local coordinate $z$ on $X$ centered in $q$ such that $X_s$
is defined locally by the equation $$x^2-z^m=0.$$ Let
$p=\pi^{-1}(q)\in X_s$. If $m>1$, $p$ is a singular double point of
$X_s$; it is a simple singularity of type $A_{m-1}$ (see for instance
Chapter 2, Sec. 8 of \cite{barth-hulek-peters-van de ven:2004}).

The normalization
$$\widetilde\pi:\widetilde X_s\longrightarrow X_s$$
is then a smooth curve and
$\widetilde\pi$ is an isomorphism outside of $\widetilde\pi^{-1}(X_s^{\text{sing}})$, where $X_s^{\text{sing}}$ denotes the singular locus of $X_s$.  
If $m$ is even, then $p$ is a
node (ordinary, if $m=2$) and $\widetilde\pi^{-1}(p)=\{p_1,p_2\}$ with
$p_1\neq p_2$. If $m\geq 3$ is odd, $p$ is a cusp and
$\widetilde\pi^{-1}(p)=p_1$.

The following result is well-known. We include the proof for
the convenience of the reader.
\begin{proposition}\label{jacobian singular}
  Suppose that $X_s$ has $r_1$ nodes of types $A_{m_i-1}$,
  $i=1,\dots,r_1$ ($m_i$ even) and, that it has $r_2$ cusps of types
  $A_{m'_j-1}$, $j=1,\dots,r_2$ ($m'_j$ odd). Then there is a short exact
  sequence
  \begin{displaymath}
    0\longrightarrow (\C^*)^{r_1}\times\C^{\sum_{i=1}^{r_1}(m_i-2)/2+\sum_{j=1}^{r_2} (m'_j-1)/2}\longrightarrow\Jac(X_s)\longrightarrow\Jac(\widetilde X_s)\longrightarrow 0.
  \end{displaymath}
\end{proposition}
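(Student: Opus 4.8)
The plan is to exploit the standard exact sequence relating the Jacobian of a singular curve to that of its normalization, coming from the normalization exact sequence of sheaves, and then to compute the relevant quotient sheaf explicitly in local coordinates using the $A_{m-1}$ equation $x^2 - z^m = 0$. First I would write down the short exact sequence of sheaves on $X_s$
\begin{displaymath}
  0 \longrightarrow \cO_{X_s} \longrightarrow \widetilde\pi_*\cO_{\widetilde X_s} \longrightarrow \mathcal Q \longrightarrow 0,
\end{displaymath}
where $\mathcal Q$ is a skyscraper sheaf supported on the singular locus $X_s^{\mathrm{sing}}$. Taking the associated long exact sequence in cohomology and passing to unit groups (equivalently, applying $\mathcal Hom$ into $\C^*$ and using $H^0$, $H^1$), one obtains an exact sequence
\begin{displaymath}
  0 \longrightarrow H^0(X_s,\mathcal Q) \longrightarrow \Pic(X_s) \longrightarrow \Pic(\widetilde X_s) \longrightarrow 0,
\end{displaymath}
where I have already used that the map on global functions $H^0(\cO_{X_s})\to H^0(\widetilde\pi_*\cO_{\widetilde X_s})$ is an isomorphism (both equal $\C$, since $X_s$ is irreducible and reduced hence $\widetilde X_s$ is connected), and that $H^1$ of the skyscraper sheaf vanishes; restricting to degree $0$ gives the claimed sequence with $\Jac$ in place of $\Pic$. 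Here $H^0(X_s,\mathcal Q)$ means the group of units of the stalk, i.e.\ $\bigoplus_{p} (\widetilde\cO_p/\cO_p)^{\times}$ in the appropriate sense; more precisely the contribution of each singular point $p$ is the unit group of the Artinian ring $\widetilde{\cO}_{X_s,p}\big/\cO_{X_s,p}$ twisted correctly, which I make precise below.

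The core of the argument is the local computation at a singular point $p$ of type $A_{m-1}$. Locally the complete local ring of $X_s$ at $p$ is $\cO = \C[[x,z]]/(x^2-z^m)$ and its normalization $\widetilde\cO$ is: the power series ring $\C[[t]]$ when $m$ is odd (with $z = t^2$, $x = t^m$), i.e.\ a single branch (cusp); and $\C[[u]]\times\C[[v]]$ when $m$ is even (with $x = \pm z^{m/2}$ on the two branches), i.e.\ two branches (node). In the odd (cusp) case the conductor computation shows $\dim_\C \widetilde\cO/\cO = (m-1)/2$, and since $\widetilde\cO/\cO$ is a finite-dimensional $\C$-vector space with no units to contribute (it is a nilpotent extension, so the unit group is $(1 + \mathfrak n)$, which as a group under multiplication is isomorphic to the additive group $\C^{(m-1)/2}$ via the exponential/truncated-log since we are in characteristic zero and the ideal is nilpotent). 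In the even (node) case, $\dim_\C \widetilde\cO/\cO = m/2$; the relevant local Picard contribution is $(\widetilde\cO^{\times}/\cO^{\times})$, which sits in an exact sequence whose "$\C^*$" part is the torus coming from the two independent branches passing through one point (rescaling each branch independently modulo the common value, giving one $\C^*$ factor) and whose remaining part is the unipotent piece $\C^{(m_i - 2)/2}$. Summing over the $r_1$ nodes and $r_2$ cusps gives the group
\begin{displaymath}
  (\C^*)^{r_1} \times \C^{\sum_{i=1}^{r_1}(m_i-2)/2 + \sum_{j=1}^{r_2}(m'_j-1)/2},
\end{displaymath}
which is exactly the kernel appearing in the statement.

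I would then assemble these pieces: the global sequence above identifies $\ker(\Jac(X_s)\to\Jac(\widetilde X_s))$ with $\bigoplus_p(\widetilde\cO_p^{\times}/\cO_p^{\times})$, and the local computations identify each summand. Surjectivity of $\Jac(X_s)\to\Jac(\widetilde X_s)$ follows because $R^1\widetilde\pi_*\cO_{\widetilde X_s} = 0$ ($\widetilde\pi$ being a finite morphism) so that $H^1(X_s,\widetilde\pi_*\cO_{\widetilde X_s}) = H^1(\widetilde X_s,\cO_{\widetilde X_s})$ and the long exact sequence gives the surjection $H^1(\cO_{X_s})\twoheadrightarrow H^1(\cO_{\widetilde X_s})$ (the next term $H^1(\mathcal Q)$ vanishing), and passing to line bundles of degree zero. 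The main obstacle, and the step requiring the most care, is the local analysis of the $A_{m-1}$ singularity: correctly computing $\dim_\C \widetilde\cO/\cO$ and — more subtly — correctly splitting off the toric factor in the node case (the single $\C^*$ per node, not two) versus the purely unipotent contribution in the cusp case, and checking the dimension count $(m_i-2)/2$ resp.\ $(m'_j-1)/2$ of the unipotent part matches. This is a classical computation (see Chapter 2, Sec.~8 of \cite{barth-hulek-peters-van de ven:2004} for the singularity types, and the general theory of generalized Jacobians), but getting the constants exactly right is where the real content lies.
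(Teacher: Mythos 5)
Your proposal is correct and follows essentially the same route as the paper: the multiplicative normalization sequence $0\to\mathcal O_{X_s}^*\to\widetilde\pi_*\mathcal O_{\widetilde X_s}^*\to T\to 0$ with skyscraper quotient supported on the singular locus, its cohomology sequence (using that $\widetilde\pi$ is finite and that $H^0$ of both unit sheaves is $\C^*$), and the local identification of the contribution of each $A_{m-1}$ point as $\C^*\times\C^{(m-2)/2}$ (node) or $\C^{(m-1)/2}$ (cusp). The only difference is presentational: you identify the local quotient abstractly as $\widetilde{\mathcal O}_p^{\times}/\mathcal O_p^{\times}$ via the $\delta$-invariant and exp/log, whereas the paper writes out the local rings and explicit evaluation maps in coordinates; the constants you state agree with the paper's.
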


\proof 

Assume first that the curve $X_s$ has only one singularity. We
consider the cases of a node and of a cusp separately.

Let $p$ be a singular point of $X_s$ and $p \in U$ for an open $U
\subset X_s$, with
local equation $x^2-z^m=0$, where $m\geq 2$ is even. Around $p$, $X_s$
is reducible and we write $$U=U_1\cup U_2$$ for the decomposition of
$U$ into the two irreducible components. The component $U_1$
(resp.\ $U_2$) has then defining equation $x-z^{m/2}=0$
(resp.\ $x+z^{m/2}=0$). The coordinate ring of $U$
is 
$$\C[x,z]/(x^2-z^m).$$ 
The
desingularization of $U$ is given by two disjoint copies of $\C$ which
we denote by $\C_1$ and $\C_2$, and the normalization map
is $$f=f_1\cup f_2:\C_1\cup\C_2\longrightarrow U_1\cup U_2$$ defined
by
$$f(w_1)=f_1(w_1)=(w_1^{m/2},w_1),\ w_1\in\C_1\hspace{0,2cm}\text{ and }\hspace{0,2cm}f(w_2)=f_2(w_2)=(-w_2^{m/2},w_2),\ w_2\in\C_2.$$
The corresponding
map $\phi=\phi_1\oplus\phi_2$ between the coordinate rings is
\begin{equation}
\label{map between coordinate rings near singularity of even type}
\begin{aligned}
  \phi:\C[x,z]/(x^2-z^m) &\longrightarrow \C_1[t_1]\oplus\C_2[t_2], \\
  g(x,z) &\longmapsto g(t_1^{m/2},t_1)\oplus g(-t_2^{m/2},t_2).
\end{aligned}
\end{equation}
If we write
\begin{equation}\label{g(x,z)}
  g(x,z)=\sum a_{ij}x^iz^j 
\end{equation}
then 
$$
\phi_1(g(x,z))(t_1)=\sum a_{ij}t_1^{\frac{m}{2}i+j}
\qquad
\text{and} 
\qquad
\phi_2(g(x,z))(t_2)=\sum (-1)^ia_{ij}t_2^{\frac{m}{2}i+j}.
$$ 
As a result, if $\phi_i(g(x,z))^{(k)}(0)$ denotes the $k$-th derivative
of $\phi_i(g(x,z))$ at $0$, we obtain
$$\phi_1(g(x,z))^{(k)}(0)=\phi_2(g(x,z))^{(k)}(0)\in\C$$
for every $k=0,\dots,(m-2)/2$, and conclude that
\begin{equation}\label{local ring for even singularities}
  U\cong\text{Spec}\{(f_1,f_2)\in\C_1[t_1]\oplus\C_2[t_2]
  \suchthat f_1^{(k)}(0)=f_2^{(k)}(0),\;k=0,\ldots,(m-2)/2\}.
\end{equation}
Consider now only functions $g(x,z)$ as in (\ref{g(x,z)}) which do not
vanish at $p$, i.e., with $a_{00}\neq
0$. Then $$\phi_1(g(x,z))(0)=\phi_2(g(x,z))(0)\in\C^*$$ and also, if
$m\geq 4$, then
$$\phi_1(g(x,z))^{(k)}(0)=\phi_2(g(x,z))^{(k)}(0)\in\C$$ for every $k=0,\dots,(m-2)/2$. Let $(\C^*\times\C^{(m-2)/2})_p$ be the skyscraper sheaf supported at $p$ and consider the evaluation map $\widetilde\pi_*\mathcal O_{\widetilde X_s}^*\to (\C^*\times\C^{(m-2)/2})_p$ at $p$ given, for each open $U$, by
$$f\mapsto 0$$ if $p\notin U$, and
$$(f_1,f_2)\mapsto(f_1(p_1)/f_2(p_2),f_1'(p_1)-f_2'(p_2),\ldots,f_1^{((m-2)/2)}(p_1)-f_2^{((m-2)/2)}(p_2))$$
if $p\in U$. This has kernel the image of the sheaf map $\phi:\mathcal
O_{X_s}^*\to\widetilde\pi_*\mathcal O_{\widetilde X_s}^*$ given by
$\phi(U)=\Id$ if $p\notin U$ and by 
(\ref{map between coordinate rings near singularity of even type}) 
if $p\in U$. Hence we have the exact
sequence $$0\longrightarrow \mathcal O_{X_s}^*\stackrel{\phi}{\longrightarrow}\widetilde\pi_*\mathcal O_{\widetilde X_s}^*\longrightarrow (\C^*\times\C^{(m-2)/2})_p\longrightarrow 0.$$

We now consider cusps in the spectral curve. If $p$ is a singular point of $X_s$ and $U$ the open set of $X_s$ containing $p$ with local equation $x^2-z^m=0$, with $m\geq 3$ odd, then $U$ is now irreducible and one sees analogously to the previous case that there is a map  $\phi:\C[x,z]/(x^2-z^m)\to\C[t]$ between the corresponding coordinate rings of $p$ and of $p_1=\widetilde\pi^{-1}(p)$. $\phi$ is given by
\begin{equation}\label{map between coordinate rings near singularity of odd type}
 \phi(g(x,z))=g(t^m,t^2)
\end{equation}
and
$$U\cong\text{Spec}\{f\in\C[t]\suchthat f'(0)=f'''(0)=\dots=f^{(m-2)}(0)=0\}.$$

Consider the skyscraper sheaf $(\C^{(m-1)/2})_p$ supported at $p$ and again the evaluation $\widetilde\pi_*\mathcal O_{\widetilde X_s}^*\to (\C^{(m-1)/2})_p$ at $p$ given, for each open $U$, by
$$f\mapsto 0$$ if $p\notin U$, and
$$f\mapsto(f'(p),f'''(p),\dots,f^{(m-2)}(p))$$
if $p\in U$. This has kernel the image of the sheaf map $\phi:\mathcal O_{X_s}^*\to\widetilde\pi_*\mathcal O_{\widetilde X_s}^*$, given by $\phi(U)=\Id$ if $p\notin U$ and by (\ref{map between coordinate rings near singularity of odd type}) if $p\in U$. We have thus the following short exact sequence $$0\longrightarrow \mathcal O_{X_s}^*\stackrel{\phi}{\longrightarrow}\widetilde\pi_*\mathcal O_{\widetilde X_s}^*\longrightarrow (\C^{(m-1)/2})_p\longrightarrow 0.$$

In general, if $X_s$ has more singularities, each of which of type $A_{m-1}$, we have the short exact sequence of sheaves on $X_s$
$$0\longrightarrow \mathcal O_{X_s}^*\longrightarrow \widetilde\pi_*\mathcal O_{\widetilde X_s}^*\longrightarrow T\longrightarrow 0$$
where $T$ is a skyscraper sheaf supported at $X_s^\text{sing}$ such that, over each singular point, it is given as one of the above types.
Taking the corresponding cohomology sequence, we obtain
\begin{equation}\label{cohomology sequence Jacobian}
0\longrightarrow\bigoplus_{p\in X_s^\text{sing}}T_p\stackrel{\delta}{\longrightarrow}H^1(X_s,\mathcal O_{X_s}^*)\stackrel{\widetilde\pi^*}{\longrightarrow}H^1(X_s,\widetilde\pi_*\mathcal O_{\widetilde X_s}^*)\longrightarrow 0.
\end{equation}
Since $\widetilde\pi:\widetilde X_s\to X_s$ is a finite morphism, we have an isomorphism $$H^1(X_s,\widetilde\pi_*\mathcal O_{\widetilde X_s}^*)\cong H^1(\widetilde X_s,\mathcal O_{\widetilde X_s}^*)$$
hence, restricting sequence (\ref{cohomology sequence Jacobian}) to degree zero line bundles, we obtain
\begin{equation}\label{jacobian singular 1}
0\longrightarrow \bigoplus_{p\in X_s^\text{sing}}T_p\stackrel{\delta}{\longrightarrow}\Jac(X_s)\stackrel{\widetilde\pi^*}{\longrightarrow}\Jac(\widetilde X_s)\longrightarrow 0. 
\end{equation}
Since we are assuming that $X_s$ has $r_1$ nodes with types $A_{m_i-1}$, $i=1,\dots,r_1$ and, that it has $r_2$ cusps of types $A_{m'_j-1}$, $j=1,\dots,r_2$, then sequence (\ref{jacobian singular 1}) becomes
$$0\longrightarrow (\C^*)^{r_1}\times\C^{\sum_{i=1}^{r_1}(m_i-2)/2+\sum_{j=1}^{r_2} (m'_j-1)/2}\stackrel{\delta}{\longrightarrow}\Jac(X_s)\stackrel{\widetilde\pi^*}{\longrightarrow}\Jac(\widetilde X_s)\longrightarrow 0$$
and this finishes the proof.\endproof

\subsection{The Prym}
\label{sec:prym-normalization}
Consider the double cover, $$\overline\pi=\pi\circ\widetilde\pi:\widetilde X_s\longrightarrow X$$ of $X$:
$$\xymatrix{&\widetilde X_s\ar[dr]^{\widetilde\pi}\ar[dd]_{\overline\pi}&&\\
          &&X_s\ar[dl]^{\pi}&\\
	  & X.&&}$$
We have the
corresponding Prym variety (see Definition \ref{def:Prym}), $\Prym_{\bar{\pi}}(\tilde{X}_s) \subset
\Jac(\tilde{X}_s)$. The purpose of this section is to identify the
short exact sequence obtained from the one of
Proposition~\ref{jacobian singular} by restricting to this Prym.  We
continue under the hypothesis of that proposition, assuming that $X_s$
has $r_1$ nodes with types $A_{m_i-1}$, $i=1,\dots,r_1$, and $r_2$
cusps of types $A_{m'_j-1}$, $j=1,\dots,r_2$.

Let
\begin{equation}\label{Ds}
D_s=\sum_{i=1}^{r_1}m_iq_i+\sum_{j=1}^{r_2}m'_jq_j
\end{equation}
be the decomposition of the divisor of $s\in H^0(X,L^2)$ into its even and odd parts.
Notice that $\overline\pi:\widetilde X_s\to X$ is ramified exactly over the odd part of the divisor $D_s$.

\begin{remark}\label{r2 even}
Since $\deg(D_s)=2d_L$ is even, then $r_2$ must be even.
\end{remark}

Define the divisor $D'_s$ as
\begin{equation}\label{D's}
D'_s=\sum_{i=1}^{r_1}\frac{m_i}{2}q_i+\sum_{j=1}^{r_2}\frac{m'_j-1}{2}q_j.
\end{equation}
Then we have the short exact sequence
\begin{equation}
\label{eq:ses-Ds}
0\longrightarrow\mathcal
O_{X_s}\longrightarrow\widetilde\pi_*\mathcal O_{\widetilde
  X_s}\longrightarrow\mathcal O_{\pi^{-1}(D'_s)}\longrightarrow 0
\end{equation} 
where $D'_s$ is defined in (\ref{D's}) and $\pi^{-1}(D'_s)$ is the
obvious divisor in $X_s$.

\begin{remark}
  It follows from Proposition \ref{jacobian singular}
  that $$\dim\Jac(X_s)=\dim\Jac(\widetilde X_s)+\deg(D'_s)$$ hence,
  denoting by $g(X_s)$ the arithmetic genus of $X_s$ and $g(\widetilde
  X_s)$ the genus of $\widetilde X_s$,
$$g(X_s)=g(\widetilde X_s)+\deg(D'_s).$$
\end{remark}

\begin{lemma}\label{pullback from Prym to normalization}
Let $F$ be a line bundle over $X_s$. Then $\det(\bar\pi_*\widetilde\pi^*F)\cong\det(\pi_*F)\otimes\mathcal O_X(D'_s)$.
\end{lemma}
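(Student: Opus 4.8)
The plan is to compute both determinants in terms of the short exact sequence (\ref{eq:ses-Ds}) relating $\mathcal O_{X_s}$, $\widetilde\pi_*\mathcal O_{\widetilde X_s}$ and the skyscraper $\mathcal O_{\pi^{-1}(D'_s)}$, and compare. First I would reduce to the case of a line bundle: by tensoring (\ref{eq:ses-Ds}) with $F$ and using that $F$ is locally free, one gets a short exact sequence
\begin{equation*}
0\longrightarrow F\longrightarrow \widetilde\pi_*\mathcal O_{\widetilde X_s}\otimes F\longrightarrow \mathcal O_{\pi^{-1}(D'_s)}\otimes F\longrightarrow 0,
\end{equation*}
and by the projection formula $\widetilde\pi_*\mathcal O_{\widetilde X_s}\otimes F\cong \widetilde\pi_*(\widetilde\pi^*F)$. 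The skyscraper on the right is a length-$\deg(D'_s)$ torsion sheaf supported on $\pi^{-1}(D'_s)\subset X_s^{\mathrm{sing}}$, so applying $\pi_*$ (which is exact here since $\pi$ is finite) we obtain
\begin{equation*}
0\longrightarrow \pi_*F\longrightarrow \pi_*\widetilde\pi_*(\widetilde\pi^*F)\longrightarrow \pi_*\bigl(\mathcal O_{\pi^{-1}(D'_s)}\otimes F\bigr)\longrightarrow 0,
\end{equation*}
and $\pi_*\widetilde\pi_* = \bar\pi_*$.

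Next I would take determinants of the exact sequence of sheaves on $X$. The point is that $\det$ is multiplicative on short exact sequences, so $\det(\bar\pi_*\widetilde\pi^*F)\cong \det(\pi_*F)\otimes\det\bigl(\pi_*(\mathcal O_{\pi^{-1}(D'_s)}\otimes F)\bigr)$. The last factor is the determinant of a torsion sheaf on $X$; since $F$ is a line bundle it is locally trivial near the support, so $\mathcal O_{\pi^{-1}(D'_s)}\otimes F\cong \mathcal O_{\pi^{-1}(D'_s)}$ as $\mathcal O_{X_s}$-modules, and its pushforward to $X$ is the structure sheaf of the divisor $\mathrm{Nm}_\pi(\pi^{-1}(D'_s))$. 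Reading off the definition (\ref{D's}) of $D'_s$ and the ramification pattern --- $\pi$ is unramified over the even points $q_i$ (so $\pi^{-1}(q_i)$ has two points, each pushing forward to $q_i$ with multiplicity $m_i/2$, total $m_i$... wait: we need $\mathrm{length}$), one has to check that $\pi_*\mathcal O_{\pi^{-1}(D'_s)}$ has length equal to $\deg D'_s$ at each $q$ and is annihilated by the appropriate power of the local parameter, giving $\pi_*\mathcal O_{\pi^{-1}(D'_s)}\cong \mathcal O_{D_s'}$ on $X$; hence its determinant is $\mathcal O_X(D'_s)$. Combining, $\det(\bar\pi_*\widetilde\pi^*F)\cong\det(\pi_*F)\otimes\mathcal O_X(D'_s)$.

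The main obstacle, and the one step deserving genuine care, is the local bookkeeping at a singular point identifying $\pi_*\mathcal O_{\pi^{-1}(D'_s)}$ with $\mathcal O_{X}(D'_s)$ restricted appropriately --- that is, verifying the two cases (node, $m$ even, where $\pi^{-1}(q)$ is two reduced points and $D'_s$ contributes $\tfrac{m}{2}q$; and cusp, $m$ odd, where $\pi^{-1}(q)$ is a single point with multiplicity and $D'_s$ contributes $\tfrac{m-1}{2}q$) so that in both cases the length of the pushed-forward torsion sheaf at $q$ matches the coefficient of $q$ in $D'_s$. I would handle this with the explicit local coordinates $x^2 - z^m = 0$ already set up in Section~\ref{The spectral curve Xs for s with multiple zeros} and the description (\ref{eq:ses-Ds}) of the quotient sheaf, where $D'_s$ was precisely rigged so that $\deg D'_s = g(X_s) - g(\widetilde X_s)$; the same rigging makes the determinant computation work. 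Everything else is formal: exactness of finite pushforward, the projection formula, and multiplicativity of $\det$.
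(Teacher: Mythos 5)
Your proposal is correct and is essentially the paper's proof: tensor (\ref{eq:ses-Ds}) with $F$, apply the projection formula, push forward by the finite map $\pi$ to get $0\to\pi_*F\to\bar\pi_*\widetilde\pi^*F\to\mathcal O_{D'_s}\to 0$ on $X$, and take determinants, the cokernel pushing forward to a torsion sheaf of length $D'_s(q)$ at each $q$. The local length check you flag is exactly what the definition (\ref{D's}) of $D'_s$ (coefficients $m_i/2$ at nodes and $(m'_j-1)/2$ at cusps, i.e.\ the $\delta$-invariants from the local rings in Section~\ref{The spectral curve Xs for s with multiple zeros}) was rigged to guarantee, and your momentary detour through $\cNm_\pi$ is rightly abandoned in favour of counting lengths.
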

\proof
Tensoring the short exact sequence (\ref{eq:ses-Ds}) by $F$, noticing
that $F\otimes\widetilde\pi_*\mathcal O_{\widetilde
  X_s}\cong\widetilde\pi_*\widetilde\pi^*F$, and then applying the
push forward by $\pi$ (which has discrete fibres), we obtain the short
exact sequence $$0\longrightarrow \pi_*F\longrightarrow \bar\pi_*\widetilde\pi^*F\longrightarrow\mathcal O_{D'_s}\longrightarrow 0$$ on $X$.
The result follows by taking determinants.
\endproof

In analogy with (\ref{definition of Taupi}), we define
\begin{equation}\label{T_pi}
\mathcal
T_\pi=\{F\in\Jac^{d+d_L}(X_s)\suchthat\det(\pi_*F)\cong\Lambda\}
\subset \Jac^{d+d_L}(X_s).
\end{equation}
From Lemma~\ref{pullback from Prym to normalization},
the restriction to $\mathcal T_\pi$ of the map
$\widetilde\pi^*:\Jac^{d+d_L}(X_s)\to\Jac^{d+d_L}(\widetilde X_s)$ of 
Proposition~\ref{jacobian singular} (which holds for any degree) takes values
in 
$$\widetilde{\mathcal T}_{\overline\pi}
=\{F\in\Jac^{d+d_L}(\widetilde X_s)\suchthat
\det(\overline\pi_*F)\cong\Lambda(D'_s)\}.$$ If $F$ and $F'$ are two
line bundles over $X_s$ such that 
$\widetilde\pi^*F\cong\widetilde\pi^* F'$, then
$\det(\overline\pi_*\widetilde\pi^*F)=\det(\overline\pi_*\widetilde\pi^*F')$,
so, again from Lemma \ref{pullback from Prym to normalization},
$\det(\pi_*F)=\det(\pi_*F')$. This shows that the fibre of the
restriction of $\widetilde\pi^*$ to $\mathcal T_\pi$ is the same as
the fibre of $\widetilde\pi^*:\Jac^{d+d_L}(X_s)\to\Jac^{d+d_L}(\widetilde X_s)$ in
Proposition \ref{jacobian singular}. Hence, we have the sequence
$$0\longrightarrow (\C^*)^{r_1}\times\C^{\sum_{i=1}^{r_1}(m_i-2)/2+\sum_{j=1}^{r_2} (m'_j-1)/2}\longrightarrow\mathcal T_\pi\stackrel{\widetilde\pi^*}{\longrightarrow}\widetilde{\mathcal T}_{\overline\pi}\longrightarrow 0.$$

Recall that the spectral curve $X_s$ depends on the line bundle $L$ and on the section $s\in H^0(X,\mathcal{O}(D_s))$ with $L^2\cong\mathcal O(D_s)$. Conversely, if we have $D_s$ and a double cover $\pi:X_s\to X$ ramified over $D_s$, we recover $L$ since $\pi_*\mathcal O_{X_s}\cong\mathcal O_X\oplus L^{-1}$ (cf. Lemma 17.2 of \cite{barth-hulek-peters-van de ven:2004}).
In the same way, we recover the line bundle $\widetilde L$ over $X$, associated to the double cover $\overline\pi:\widetilde X_s\to X$.

Using the short exact sequence (\ref{eq:ses-Ds}) and the definitions
of $L$ and $\widetilde L$, we obtain the following diagram with exact
rows and column:
$$\xymatrix{&&&0\ar[d]&&\\
          &0\ar[r]&\mathcal O_X\ar[r]&\pi_*\mathcal O_{X_s}\ar[r]\ar[d]&L^{-1}\ar[r]&0\\
          &0\ar[r]&\mathcal O_X\ar[r]&\overline\pi_*\mathcal O_{\widetilde X_s}\ar[r]\ar[d]&\widetilde L^{-1}\ar[r]&0\\
          &&&\mathcal O_{D'_s}\ar[d]&&\\
	  &&&0.&&}$$
{}From this it follows that 
\begin{equation}\label{widetilde L in terms of L}
 \widetilde L\cong L(-D'_s).
\end{equation}

Now, the space $\mathcal T_\pi$ is clearly isomorphic to $P_\pi
\subset \Jac(X_s)$ defined by
\begin{equation}
  \label{eq:def-P_pi}
  P_\pi=\{F\in\Jac(X_s)\suchthat\det(\pi_*F)\cong L^{-1}\}
\end{equation}
and, from (\ref{widetilde L in terms of L}) and Lemma \ref{pullback from Prym to normalization}, 
$$\widetilde\pi^*(P_\pi)=\{F\in\Jac(\widetilde
X_s)\suchthat\det(\overline\pi_*F)\cong\widetilde L^{-1}\}.$$ 
But, since
$\det(\overline\pi_*N)\cong\cNm_{\overline\pi}(N)\otimes\widetilde
L^{-1}$ (cf. (\ref{det and norm})), we have
$$\widetilde\pi^*(P_\pi)\cong\{F\in\Jac(\widetilde X_s)\suchthat\cNm_{\pi}(F)\cong\mathcal O_X\}=\Prym_{\overline\pi}(\widetilde X_s).$$
We have thus proved the following result.
\begin{proposition}
  Suppose that the spectral curve $X_s \xrightarrow{\pi} X$ has $r_1$
  nodes with types $A_{m_i-1}$, $i=1,\dots,r_1$ and, that it has
  $r_2$ cusps of types $A_{m'_j-1}$, $j=1,\dots,r_2$. Let
  $\tilde\pi\colon \tilde{X}_s \to X_s$ be the normalization map and let
  $\bar{\pi} = \pi \circ \tilde{\pi}\colon \tilde{X}_s \to X$.
  Then there is a short exact
  sequence
  \begin{equation}\label{Prym singular}
    0\longrightarrow (\C^*)^{r_1}\times\C^{\sum_{i=1}^{r_1}(m_i-2)/2+\sum_{j=1}^{r_2} (m'_j-1)/2}\longrightarrow P_\pi\stackrel{\widetilde\pi^*}{\longrightarrow}\Prym_{\overline\pi}(\widetilde X_s)\longrightarrow 0,
  \end{equation}
  where $P_{\pi}$ was defined in (\ref{eq:def-P_pi}).
\end{proposition}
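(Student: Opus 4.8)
The plan is to obtain the short exact sequence~(\ref{Prym singular}) by restricting the sequence of Proposition~\ref{jacobian singular} --- applied in degree $d+d_L$, where it holds without change --- to the subvariety $\mathcal T_\pi\subset\Jac^{d+d_L}(X_s)$ of~(\ref{T_pi}), and then transporting the restricted sequence along the translation isomorphism $\mathcal T_\pi\cong P_\pi$ together with an explicit computation of the image $\widetilde\pi^*(P_\pi)\subset\Jac(\widetilde X_s)$.

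First I would show that restricting $\widetilde\pi^*\colon\Jac^{d+d_L}(X_s)\to\Jac^{d+d_L}(\widetilde X_s)$ to $\mathcal T_\pi$ gives a surjection onto $\widetilde{\mathcal T}_{\overline\pi}$ with the \emph{same} kernel $(\C^*)^{r_1}\times\C^{\sum_{i=1}^{r_1}(m_i-2)/2+\sum_{j=1}^{r_2}(m'_j-1)/2}$ as in Proposition~\ref{jacobian singular}. The mechanism here is Lemma~\ref{pullback from Prym to normalization}: since $\det(\overline\pi_*\widetilde\pi^*F)\cong\det(\pi_*F)\otimes\mathcal O_X(D'_s)$, the defining condition $\det(\pi_*F)\cong\Lambda$ of $\mathcal T_\pi$ depends only on $\widetilde\pi^*F$. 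Hence, if $F\in\mathcal T_\pi$ and $\widetilde\pi^*F'\cong\widetilde\pi^*F$ then also $F'\in\mathcal T_\pi$, so the fibres of $\widetilde\pi^*|_{\mathcal T_\pi}$ are exactly the fibres of Proposition~\ref{jacobian singular} lying over $\widetilde{\mathcal T}_{\overline\pi}$; and for any $\widetilde F\in\widetilde{\mathcal T}_{\overline\pi}$, any $F\in\Jac^{d+d_L}(X_s)$ with $\widetilde\pi^*F\cong\widetilde F$ already lies in $\mathcal T_\pi$, because $\det(\pi_*F)\otimes\mathcal O_X(D'_s)\cong\det(\overline\pi_*\widetilde F)\cong\Lambda(D'_s)$. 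This yields the exact sequence $0\to(\C^*)^{r_1}\times\C^{\cdots}\to\mathcal T_\pi\xra{\widetilde\pi^*}\widetilde{\mathcal T}_{\overline\pi}\to 0$.

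Next I would translate this into a statement about $P_\pi$. Using $\det(\pi_*F)\cong\cNm_\pi(F)\otimes L^{-1}$ from~(\ref{det and norm}), the map $F\mapsto FF_0^{-1}$ for a fixed $F_0\in\mathcal T_\pi$ identifies $\mathcal T_\pi$ with $P_\pi$ of~(\ref{eq:def-P_pi}); since the kernel term acts by tensor product, it is carried along unchanged. It then remains to identify $\widetilde\pi^*(P_\pi)$: combining Lemma~\ref{pullback from Prym to normalization} with $\widetilde L\cong L(-D'_s)$ from~(\ref{widetilde L in terms of L}) converts the condition $\det(\pi_*F)\cong L^{-1}$ into $\det(\overline\pi_*\widetilde\pi^*F)\cong\widetilde L^{-1}$, and then the relation $\det(\overline\pi_*F)\cong\cNm_{\overline\pi}(F)\otimes\widetilde L^{-1}$ (cf.~(\ref{det and norm}), applied to the double cover $\overline\pi$) turns this into $\cNm_{\overline\pi}(F)\cong\mathcal O_X$, so $\widetilde\pi^*(P_\pi)=\Prym_{\overline\pi}(\widetilde X_s)$ by Definition~\ref{def:Prym}. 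Substituting these identifications into the exact sequence of the previous paragraph produces~(\ref{Prym singular}).

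The only genuinely non-formal ingredient is Proposition~\ref{jacobian singular} itself --- the local analysis of $\Jac(X_s)$ at the $A_{m-1}$ singularities --- which I take as given. Granting it, the sole delicate point is that restriction of its exact sequence to $\mathcal T_\pi$ (equivalently $P_\pi$) remains exact, and this is handled uniformly by Lemma~\ref{pullback from Prym to normalization}, which shows that membership in $\mathcal T_\pi$ is detected by $\widetilde\pi^*F$ up to the fixed twist $\mathcal O_X(D'_s)$: this forces the kernel of $\widetilde\pi^*$ into the relevant torsor and makes surjectivity onto $\widetilde{\mathcal T}_{\overline\pi}$ immediate. I expect no other obstacle; a more self-contained route --- redoing the local computation directly on the norm subvariety --- would also work, but is strictly more laborious than leveraging Proposition~\ref{jacobian singular}.
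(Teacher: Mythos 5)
Your proposal is correct and follows essentially the same route as the paper: the paper likewise restricts the degree-$(d+d_L)$ sequence of Proposition~\ref{jacobian singular} to $\mathcal T_\pi$, uses Lemma~\ref{pullback from Prym to normalization} to see that the fibres of $\widetilde\pi^*|_{\mathcal T_\pi}$ coincide with those of the unrestricted map, and then identifies $\mathcal T_\pi$ with $P_\pi$ and $\widetilde\pi^*(P_\pi)$ with $\Prym_{\overline\pi}(\widetilde X_s)$ via (\ref{widetilde L in terms of L}) and (\ref{det and norm}). Your only additions are to make explicit the translation $F\mapsto FF_0^{-1}$ and the surjectivity onto $\widetilde{\mathcal T}_{\overline\pi}$, both of which the paper leaves implicit.
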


\section{Compactified Jacobian and parabolic modules}
\label{Compactified jacobian and GPLB}

Theorem \ref{BNR} tells us that when $X_s$ is singular, we must take
into account not only line bundles on $X_s$, but also rank $1$
torsion-free sheaves on $X_s$ (the case of a spectral curve with a single node goes back to Hitchin's paper \cite{hitchin:1987b}). The space of these objects with degree
$d+d_L$ provides a natural compactification
$\overline{\Jac^{d+d_L}(X_s)}$ of $\Jac^{d+d_L}(X_s)$. Furthermore, as
$X_s$ lies inside the complex surface given by the total space $T$ of
$L$, its singularities are planar. From \cite{altman-iarrobino-kleiman:1976} and \cite[Theorem A]{rego:1980},
this is equivalent to $\overline{\Jac^{d+d_L}(X_s)}$ being irreducible
and, therefore, $\Jac^{d+d_L}(X_s)$ is dense in
$\overline{\Jac^{d+d_L}(X_s)}$.  Of course, the compactification of
$\Jac^{d+d_L}(X_s)$ by rank $1$, degree $d+d_L$ torsion-free sheaves
is determined by the one of $\Jac(X_s)$ since all these spaces are
isomorphic.

We shall analyse here the effect of the compactification of
$\Jac(X_s)$ on the bundle 
$\widetilde\pi^*:\Jac(X_s)\to\Jac(\widetilde X_s)$ of 
Proposition~\ref{jacobian singular}, when $X_s$ has
singularities of type $A_{m-1}$.  

In order to do this we shall use \emph{parabolic modules}. These
objects were first defined by Rego \cite{rego:1980} and they were
intensively studied by Cook \cite{cook:1993,cook:1998}. Cook's work
generalizes to arbitrary rank and for any $A,D,E$ singularity
the work of Bhosle \cite{bhosle:1992} for ordinary nodes.

\subsection{Parabolic modules for one ordinary node}
For simplicity and motivation, let us start with the case of a unique ordinary node ($m=2$), where we have the sequence
\begin{equation}\label{eq:C*-bundle}
0\longrightarrow\C^*\longrightarrow\Jac(X_s)\stackrel{\widetilde\pi^*}{\longrightarrow}\Jac(\widetilde X_s)\longrightarrow 0
\end{equation}
which is a particular case of Proposition \ref{jacobian singular}.
Let $p\in X_s$ be the node and $\{p_1,p_2\}=\widetilde\pi^{-1}(p)$. The above sequence tells us that a line bundle $\mathcal{F}$ over $X_s$ is determined by a pair $(F,\lambda)$ consisting by a line bundle $F$ over the normalization $\widetilde X_s$ and a non-zero scalar $\lambda\in\C^*$, such that $\mathcal{F}|_{X\setminus\{p\}}\cong\widetilde\pi_*F|_{X\setminus\{p\}}$ and $\mathcal{F}_p$ is given by the identification of $F_{p_1}$ with $F_{p_2}$ via $\lambda$. In other words, $\mathcal{F}$ fits in the sequence
\begin{equation}\label{definition of cal L}
 0\longrightarrow\mathcal{F}\longrightarrow\widetilde\pi_*F\longrightarrow F_{p_1}\oplus F_{p_2}/U_\lambda(F)\longrightarrow0
\end{equation}
where $U_\lambda(F)$ is the $1$-dimensional subspace of $F_{p_1}\oplus F_{p_2}$ generated by $(1,\lambda)$.
Recall from (\ref{local ring for even singularities}) that $$\mathcal{O}_{X_s,p}\cong\{(f_1,f_2)\in\C_1[t_1]\oplus\C_2[t_2]\suchthat f_1(0)=f_2(0)\}.$$ Since $$\mathcal{F}_p\cong\{(s_1,s_2)\in\C_1[t_1]\oplus\C_2[t_2]\suchthat s_2(0)=\lambda s_1(0)\},$$ we see that 
$\mathcal{F}$ is indeed an invertible $\mathcal{O}_{X_s}$-module.

This motivates the consideration of all pairs $$(F,U_\lambda(F))$$ with $F\in\Jac(\widetilde X_s)$ and $U_\lambda(F)$ a $1$-dimensional subspace of $F_{p_1}\oplus F_{p_2}$.
If $U_\lambda(F)$ is generated by $(1,\lambda)$ with $\lambda\in\C^*$, then these are the pairs $(F,\lambda)$ mentioned in the previous paragraph. But now we are allowing all $1$-dimensional subspaces and these are parametrized by $\mathbb P^1$. The subspaces spanned by $(1,0)$ and $(0,1)$ correspond, respectively, to $\lambda=0$ and $\lambda=\infty$ in the compactification of $\C^*$ by $\mathbb P^1$, so we now allow $\lambda\in\C\cup\{\infty\}$. Such pairs were considered by Bhosle in \cite[Definition 2.1]{bhosle:1992} who called them \emph{generalized parabolic line bundles} on $\widetilde X_s$.
If $$\text{PMod}_2(\widetilde X_s)$$ denotes the moduli space of such
pairs $(F,U_\lambda(F))$ (cf.\ Remark~\ref{rem:cook-bhosle} below) then the
projection on the first
coordinate $$\mathrm{pr}_1:\text{PMod}_2(\widetilde
X_s)\longrightarrow\Jac(\widetilde X_s)$$ gives
$\text{PMod}_2(\widetilde X_s)$ the structure of a
$\mathbb{P}^1$-bundle over $\Jac(\widetilde X_s)$:
$$\mathbb{P}^1\longrightarrow\text{PMod}_2(\widetilde X_s)\stackrel{\mathrm{pr}_1}{\longrightarrow}\Jac(\widetilde X_s).$$

Again from $(F,U_\lambda(F))$ we construct a rank $1$ torsion-free sheaf $\mathcal{F}$ on $X_s$, by taking
$\mathcal{F}$ in the exact sequence
$$0\longrightarrow\mathcal{F}\longrightarrow\widetilde\pi_*F\longrightarrow F_{p_1}\oplus F_{p_2}/U_\lambda(F)\longrightarrow0.$$
If $\lambda\in\C^*$, this is precisely the construction in (\ref{definition of cal L}). If $\lambda=0$,
then $$\mathcal{F}_p\cong\{(s_1,s_2)\in\C_1[t_1]\oplus\C_2[t_2]\suchthat s_2(0)=0\}$$ and, if $\lambda=\infty$, then $$\mathcal{F}_p\cong\{(s_1,s_2)\in\C_1[t_1]\oplus\C_2[t_2]\suchthat s_1(0)=0\}.$$ In any case $\mathcal{F}_p$ is a $2$-dimensional $\mathcal{O}_{X_s,p}$-module and therefore $\mathcal F$ is not invertible.
So, using this construction, consider the map $$\tau:\text{PMod}_2(\widetilde X_s)\longrightarrow\overline{\Jac(X_s)}$$ defined by $$\tau(F,U_\lambda(F))=\mathcal F.$$

From \cite[Theorem 3]{bhosle:1992} we know that $\tau$ is a surjective morphism and $$\tau|_{\tau^{-1}(\Jac(X_s))}:\tau^{-1}(\Jac(X_s))\longrightarrow\Jac(X_s)$$ is an isomorphism. Hence, via $\tau|_{\tau^{-1}(\Jac(X_s))}$, the morphism $\mathrm{pr}_1|_{\tau^{-1}(\Jac(X_s))}$ is identified with $\widetilde\pi^*$ and the $\mathbb{P}^1$-bundle $\mathrm{pr}_1$ is a fibrewise compactification of the $\C^*$-bundle $\widetilde\pi^*$ in (\ref{eq:C*-bundle}):
$$\xymatrix{\mathbb{P}^1\ar[r]&\text{PMod}_2(\widetilde X_s)\ar[r]^{\ \mathrm{pr}_1}\ar[d]^{\tau}&\Jac(\widetilde X_s)\\
&\overline{\Jac(X_s)}.&&}$$

Moreover, the fibre of $\tau$ over $\overline{\Jac(X_s)}\setminus\Jac(X_s)$ consists of two points which are \emph{not} mapped, through $\mathrm{pr}_1$, to the same point of $\Jac(\widetilde X_s)$.
This shows that $\overline{\Jac(X_s)}$ does not fibre over
$\Jac(\widetilde X_s)$ through $\mathrm{pr}_1$ and $\tau$. In Example
\ref{intersection of fibres} below, we will develop this observation 
in a more general setting.

\subsection{Parabolic modules for $A_{m-1}$ singularities}
Let us now consider the generalization of this construction for the other kinds of singularities we are considering. 

\begin{definition}\label{def:parabolic modules for several nodes and cusps}
Suppose that $X_s$ has $r_1$ nodes $p_1,\ldots,p_{r_1}$, with $p_i$ of type $A_{m_i-1}$ with $m_i$ even and $r_2$ cusps $q_1,\ldots,q_{r_2}$, with $q_j$ of type $A_{m'_j-1}$ with $m'_j$ odd. For each $i=1,\ldots,r_1$ let  $\widetilde\pi^{-1}(p_i)=\{p_1^i,p_2^i\}$ and for each $j=1,\ldots,r_2$, let $\widetilde\pi^{-1}(q_j)=\{\tilde q_j\}$. A \emph{parabolic module on $\widetilde X_s$ on $X_s$} is a pair $(F,\underline U(F))$ where:
\begin{enumerate}
\item $F\in\Jac(\widetilde X_s)$;
\item $\underline U(F)=(U_1(F),\dots,U_{r_1}(F),U'_1(F),\ldots,U'_{r_2}(F))$ where:
\begin{itemize}
 \item for each $i=1,\ldots,r_1$, $U_i(F)$ is an $m_i/2$-dimensional subspace of the vector space $(F_{p_1^i}\oplus F_{p_2^i})^{m_i/2}$ which is also a $\mathcal{O}_{X_s,p_i}$-module via $\widetilde\pi_*$;
\item for each $j=1,\ldots,r_2$, $U'_j(F)$ is an $(m'_j-1)/2$-dimensional subspace of the vector space $F_{\tilde q_j}^{m'_j-1}$ which is also a $\mathcal{O}_{X_s,q_j}$-module via $\widetilde\pi_*$.
\end{itemize}
\end{enumerate}
\end{definition}


\begin{remark}
  \label{rem:cook-bhosle}
  Definition~\ref{def:parabolic modules for several nodes and cusps} is the special
  case of singularities of type $A_{m-1}$ of the general definition of Cook \cite{cook:1993,cook:1998}.  For ordinary nodes only, a parabolic
  module is a generalized parabolic line bundle in the sense of Bhosle
  in \cite{bhosle:1992}. Notice that, in the above definition, the
  condition for a subspace to be a $\mathcal{O}_{X_s,p}$-module via
  $\widetilde\pi_*$, is always satisfied for ordinary nodes. If we let $$\underline{m}=(m_1,\ldots,m_{r_1},m'_1,\ldots,m'_{r_2}),$$ then Cook constructed the moduli space $$\text{PMod}_{\underline{m}}(\widetilde X_s)$$ of parabolic modules on $\widetilde X_s$, associated to $r_1+r_2$ singularities of $X_s$ of type indexed by $\underline{m}$.
\end{remark}

From Proposition \ref{jacobian singular} we have the bundle
\begin{equation}\label{jacobian singular of type A_m-1}
(\C^*)^{r_1}\times\C^{\sum_{i=1}^{r_1}(m_i-2)/2+\sum_{j=1}^{r_2} (m'_j-1)/2}\stackrel{\delta}{\longrightarrow}\Jac(X_s)\stackrel{\widetilde\pi^*}{\longrightarrow}\Jac(\widetilde X_s).
\end{equation}
As in the case of a ordinary node, consider the projection on the first factor $$\mathrm{pr}_1:\text{PMod}_{\underline{m}}(\widetilde X_s)\longrightarrow\Jac(\widetilde X_s),\hspace{0,5cm}(F,\underline{U}(F))\mapsto F.$$ Cook showed that the fibre of this projection is a product $$\prod_{i=1}^{r_1}P(A_{m_i-1})\times\prod_{j=1}^{r_2}P(A_{m'_j-1})$$ of $r_1+r_2$ closed subschemes, $P(A_{m_i-1})$ or $P(A_{m'_j-1})$, of a certain Grassmannian (depending on the type of the corresponding singularity). These subschemes are, in general, quite complicated, but it is proved in \cite[Proposition 2, p.46]{cook:1998} that $P(A_{m-1})$ is \emph{connected} for every $m$.

Also, there is a finite morphism $$\tau:\text{PMod}_{\underline{m}}(\widetilde X_s)\longrightarrow\overline{\Jac(X_s)}$$ such that $\tau(F,\underline U(F))$ is the kernel of the restriction $$\widetilde\pi_*F\longrightarrow\displaystyle\bigoplus_{i=1}^{r_1}(F_{p_1^i}\oplus F_{p_2^i})^{m_i/2}/U_{i}(F)\oplus\bigoplus_{j=1}^{r_2}F_{\tilde q_j}^{m'_j-1}/U'_{j}(F).$$
The following is proved in Theorem 4.4.1 of \cite{cook:1993}.
\begin{proposition}
  The restriction $\tau|_{\tau^{-1}(\Jac(X_s))}$ gives an isomorphism
  $\tau^{-1}(\Jac(X_s))\cong\Jac(X_s)$.
\end{proposition}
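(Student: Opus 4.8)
The statement to prove is that the restriction of $\tau$ over the open set $\Jac(X_s)\subset\overline{\Jac(X_s)}$ is an isomorphism onto $\Jac(X_s)$. The strategy is to identify, on the level of sheaves, exactly when the torsion-free sheaf $\tau(F,\underline U(F))$ is locally free, and to show that in that case the pair $(F,\underline U(F))$ is uniquely recovered from $\tau(F,\underline U(F))$.

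First I would make the local computation at a single singular point $p$ of type $A_{m-1}$, extending the node case ($m=2$) already treated in the excerpt. Writing $\mathcal F = \tau(F,\underline U(F))$ as the kernel of the restriction map $\widetilde\pi_*F \to (F_{p_1}\oplus F_{p_2})^{m/2}/U(F)$ (or $F_{\tilde q}^{m-1}/U(F)$ in the cusp case), one computes the stalk $\mathcal F_p$ as an $\mathcal O_{X_s,p}$-module using the explicit presentation of $\mathcal O_{X_s,p}$ from \eqref{local ring for even singularities} and \eqref{map between coordinate rings near singularity of odd type}. The key point is that $\mathcal F_p$ has colength $m/2$ (resp.\ $(m-1)/2$) inside $\widetilde\pi_*\mathcal O_{\widetilde X_s}$ at $p$ — exactly the colength of $\mathcal O_{X_s,p}$ itself — so $\mathcal F_p$ is an $\mathcal O_{X_s,p}$-module of the right length to possibly be free of rank one, and it is actually free of rank one precisely for one choice of $U(F)$: namely, $U(F)$ must be the image under an appropriate twist of the "diagonal" submodule, i.e.\ the one realizing $\mathcal F_p \cong \mathcal O_{X_s,p}\cdot e$ for a generator $e$. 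For the node case this is the locus $\lambda\in\C^*$; in general it is the dense orbit in $P(A_{m-1})$. This shows $\tau^{-1}(\Jac(X_s))$ is exactly the locus of parabolic modules for which all local conditions are "generic", and on this locus $\tau$ is a bijection onto $\Jac(X_s)$.

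Next I would upgrade this set-theoretic bijection to an isomorphism of schemes. The cleanest route is to exhibit an inverse morphism directly: given a line bundle $\mathcal F$ on $X_s$, set $F = \widetilde\pi^*\mathcal F$ and recover $\underline U(F)$ from the canonical inclusion $\mathcal F \into \widetilde\pi_*\widetilde\pi^*\mathcal F = \widetilde\pi_*F$, whose cokernel at each $p$ determines the subspace $U(F)$. One checks this is a morphism $\Jac(X_s)\to \mathrm{PMod}_{\underline m}(\widetilde X_s)$ landing in $\tau^{-1}(\Jac(X_s))$ and that it is a two-sided inverse to $\tau|_{\tau^{-1}(\Jac(X_s))}$. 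Since $\Jac(X_s)$ is a variety (reduced), a bijective morphism with an algebraic inverse is an isomorphism; alternatively, since $\tau$ is finite and birational onto the irreducible $\overline{\Jac(X_s)}$ (which is normal where it coincides with the smooth $\Jac(X_s)$), Zariski's main theorem gives that $\tau$ is an isomorphism over $\Jac(X_s)$.

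**Main obstacle.** The delicate part is the local stalk computation at an $A_{m-1}$ singularity with $m\geq 3$, where $U(F)$ is not merely a scalar but a point of the (possibly singular) scheme $P(A_{m-1})$ sitting in a Grassmannian, subject to the condition of being an $\mathcal O_{X_s,p}$-submodule. One must verify that the module-theoretic condition "$\mathcal F_p$ is free of rank one over $\mathcal O_{X_s,p}$" cuts out precisely a single point of $P(A_{m-1})$ (the dense point), and that near this point the scheme structures match — i.e.\ that $\tau$ restricted there is not just bijective but unramified and flat. Handling the cusp case ($m$ odd) in parallel, where $\widetilde\pi^{-1}(p)$ is a single point and the $\mathcal O_{X_s,p}$-module condition is genuinely restrictive, requires care. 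Fortunately all of this is exactly the content of Theorem~4.4.1 of \cite{cook:1993}, which we may invoke, so in the paper the proof reduces to quoting that result after setting up the correspondence above.
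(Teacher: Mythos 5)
Your proposal is correct, and its final step coincides with what the paper actually does: the paper offers no argument for this proposition at all, prefacing it only with ``The following is proved in Theorem 4.4.1 of \cite{cook:1993}'' --- exactly the citation you fall back on at the end. Your additional direct sketch goes beyond what the paper records and is sound in outline: the colength count (colength $m/2$, resp.\ $(m'-1)/2$, of both $\mathcal{F}_p$ and $\mathcal{O}_{X_s,p}$ inside the pushforward), the identification of the locally free locus with the ``generic'' local data, the recovery of $(F,\underline{U}(F))$ from a line bundle $\mathcal{F}$ via $F=\widetilde\pi^*\mathcal{F}$ and the evaluation/conductor map, and the upgrade to a scheme isomorphism by an inverse in families or by finiteness of $\tau$ plus normality of $\overline{\Jac(X_s)}$ along the smooth open part $\Jac(X_s)$. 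This parallels the node case ($m=2$) that the paper does spell out by quoting Bhosle's Theorem 3. One wording slip to fix: for a fixed $F$, freeness of $\mathcal{F}_p$ does not hold ``precisely for one choice of $U(F)$'' --- as you yourself say in the next clause, it holds on the whole dense locus ($\lambda\in\C^*$ in the node case); the choice of $U(F)$ is unique only after the target line bundle $\mathcal{F}$ on $X_s$ has been fixed, which is the injectivity statement you actually need.
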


We conclude that, under the identification of the previous proposition, $\mathrm{pr}_1|_{\tau^{-1}(\Jac(X_s))}$ is a \emph{fibrewise compactification} of the bundle $\widetilde\pi^*$ in (\ref{jacobian singular of type A_m-1}), 
\begin{equation}\label{bundle GP and morphism tau to compact Jac for general singularities}
\xymatrix{\displaystyle\prod_{i=1}^{r_1}P(A_{m_i-1})\times\prod_{j=1}^{r_2}P(A_{m'_j-1})\ar[r]&\text{PMod}_{\underline{m}}(\widetilde X_s)\ar[r]^{\hspace{0,2cm}\mathrm{pr}_1}\ar[d]^{\tau}&\Jac(\widetilde X_s)\\
&\overline{\Jac(X_s)}.&}
\end{equation}

In conclusion, one can say that \emph{$\mathrm{PMod}_m(\widetilde X_s)$ is a
compactification of $\Jac(X_s)$ which fibres over
$\Jac(\widetilde X_s)$}.  This should be contrasted with the
fact that the fibre of $\tau$ over
$\overline{\Jac(X_s)}\setminus\Jac(X_s)$ consists of a finite number
of points which may \emph{not} be mapped to the same point of
$\Jac(\widetilde X_s)$ by $\mathrm{pr}_1$. The following is an example
of this phenomenon. It will be important in the proof of Theorem
\ref{thm: fibre connected} below.
\begin{example}\label{intersection of fibres}
Let $m\geq 2$ be even, $p\in X_s$ be the only singularity of type $A_{m-1}$ and $\widetilde\pi^{-1}(p)=\{p_1,p_2\}$.
Consider the trivial bundle $\mathcal{O}_{\widetilde X_s}$ over
$\widetilde X_s$ and let $\C_{p_i}$ be its fibre over $p_i$. 
Let $$(\mathcal O_{\widetilde X_s},U_0(\mathcal O_{\widetilde X_s}))\in \text{PMod}_m(\widetilde X_s)$$ where $U_0(\mathcal O_{\widetilde X_s})$ is defined by
$$U_0(\mathcal O_{\widetilde X_s})=\{(v^1_1,v^1_2,\ldots,v^{m/2}_{m-1},v^{m/2}_m)\in(\C_{p_1}\oplus\C_{p_2})^{m/2}\suchthat v^{i/2}_i=0\text{ if }i\text{ is even}\}.$$ $U_0(\mathcal{O}_{\widetilde X_s})$ is then a subspace of $(\C_{p_1}\oplus \C_{p_2})^{m/2}$ which is also a $\mathcal{O}_{X_s,p}$-module and 
$$(\C_{p_1}\oplus \C_{p_2})^{m/2}/U_0(\mathcal{O}_{\widetilde X_s})=\C_{p_2}^{m/2}.$$

By definition, $\tau(\mathcal O_{\widetilde X_s},U_0(\mathcal O_{\widetilde X_s}))=\mathcal{F}$ fits in
\begin{equation}\label{L}
0\longrightarrow\mathcal{F}\longrightarrow\widetilde\pi_*\mathcal O_{\widetilde X_s}\longrightarrow\underbrace{\mathcal O_{\widetilde X_s,p_2}\oplus\dots\oplus\mathcal O_{\widetilde X_s,p_2}}_{m/2\text{ summands}}\longrightarrow0
\end{equation}
thus, over an open set $U\subset X_s$ which contains $p$,
\begin{equation}\label{L(U)}
\mathcal{F}(U)\cong\{(s_1,s_2)\in\C_1[t_1]\oplus\C_2[t_2]\suchthat s_2(0)=s_2'(0)=\dots=s_2^{((m-2)/2)}(0)=0\}.
\end{equation}

Now, consider the divisor $$E=\frac{m}{2}p_1-\frac{m}{2}p_2$$ over $\widetilde X_s$, the line bundle $\mathcal{O}_{\widetilde X_s}(E)$ and let $$(\mathcal O_{\widetilde X_s}(E),U_\infty(\mathcal O_{\widetilde X_s}(E))\in\text{PMod}_m(\widetilde X_s),$$ where
$U_\infty(\mathcal O_{\widetilde X_s}(E))$ is given by
$$U_\infty(\mathcal O_{\widetilde X_s}(E))=\{(v^1_1,v^1_2,\ldots,v^{m/2}_{m-1},v^{m/2}_m)\in(\C_{p_1}\oplus\C_{p_2})^{m/2}\suchthat v^{i/2}_{i-1}=0\text{ if }i\text{ is even}\}.$$ Then $U_\infty(\mathcal O_{\widetilde X_s}(E))$ is a subspace of $(\C_{p_1}\oplus \C_{p_2})^{m/2}$ which is also a $\mathcal{O}_{X_s,p}$-module and 
$$(\C_{p_1}\oplus \C_{p_2})^{m/2}/U_\infty(\mathcal O_{\widetilde X_s}(E))=\C_{p_1}^{m/2}.$$
So, if we consider $\mathcal{F}'$ such that it fits in
\begin{equation}\label{L'}
0\longrightarrow\mathcal{F}'\longrightarrow\widetilde\pi_*\mathcal O_{\widetilde X_s}\longrightarrow\underbrace{\mathcal O_{\widetilde X_s,p_1}\oplus\dots\oplus\mathcal O_{\widetilde X_s,p_1}}_{m/2\text{ summands}}\longrightarrow0,
\end{equation}
then, over $U$,
\begin{equation}\label{L'(U)}
\mathcal{F}'(U)\cong\{(s_1,s_2)\in\C_1[t_1]\oplus\C_2[t_2]\suchthat s_1(0)=s_1'(0)=\dots=s_1^{((m-2)/2)}(0)=0\}.
\end{equation}
Tensoring the sequence (\ref{L'}) by
$\widetilde\pi_*\mathcal{O}_{\widetilde X_s}(E)$ we conclude
that 
$$\tau(\mathcal O_{\widetilde X_s}(E),U_\infty(\mathcal O_{\widetilde X_s}(E)))=
\mathcal{F}'\otimes\widetilde\pi_*\mathcal{O}_{\widetilde X_s}(E).$$ 
Now, from (\ref{L(U)}) and (\ref{L'(U)}) one
has $$\mathcal{F}'\otimes\widetilde\pi_*\mathcal{O}_{\widetilde
  X_s}(E)\cong\mathcal{F}$$ and thus
\begin{equation}\label{does not fibre}
\tau(\mathcal O_{\widetilde X_s},U_0(\mathcal O_{\widetilde X_s}))=\tau(\mathcal O_{\widetilde X_s}(E),U_\infty(\mathcal O_{\widetilde X_s}(E)))=\mathcal F. 
\end{equation}
But 
$$\mathrm{pr}_1(\mathcal O_{\widetilde X_s},U_0(\mathcal O_{\widetilde X_s}))=\mathcal O_{\widetilde X_s}$$
while 
$$\mathrm{pr}_1(\mathcal O_{\widetilde X_s}(E),U_\infty(\mathcal O_{\widetilde X_s}(E)))=\mathcal O_{\widetilde X_s}(E).$$
Hence $\overline{\Jac(X_s)}$ does not fibre over 
$\Jac(\widetilde X_s)$ via $\mathrm{pr}_1$ and $\tau$.
\end{example}

\section{The fibre of the Hitchin map for irreducible singular
  spectral curve }\label{The singular case}
In the two preceding sections we have been analysing the structure of
the Jacobian of $X_s$ and its compactification by rank $1$
torsion-free sheaves. Now we come back to our goal: the description of
the fibre $\cH^{-1}(s)$.

For the remainder of this section, let $s\in H^0(X,L^2)$ be such that
the spectral curve $X_s$ is singular and irreducible. Thus
$D_s=\divisor(s)$ has at least one multiple point, but $s$ is not a
square of a section of $L$ (cf.\
Remark~\ref{rem:singular-smooth-X_s}).

\subsection{Description of the fibre}

As already mentioned, in order to have a description of the fibre of
$\cH$ over $s$, and taking into account Theorem \ref{BNR} and
Proposition \ref{fibre smooth case = Prym}, we now have to consider
also rank $1$ torsion-free sheaves over $X_s$. Let
$\overline{P_\pi}\subset\overline{\Jac(X_s)}$ denote the
compactification of $P_\pi$ obtained by taking its closure inside
$\overline{\Jac(X_s)}$. Then $P_{\pi}$ is dense in $\overline{P_\pi}$.

Consider also the closure $\overline{\mathcal T_{\pi}}\subset\overline{\Jac^{d+d_L}(X_s)}$ of $\mathcal T_{\pi}$, defined in (\ref{T_pi}), induced by the compactification of $\Jac^{d+d_L}(X_s)$. Again, $\overline{\mathcal T_{\pi}}$ is a torsor for $\overline{P_\pi}$, the isomorphism $\overline{\mathcal T_{\pi}}\to\overline{P_\pi}$ being given by $\mathcal F\mapsto\mathcal F\otimes L_0^{-1}$ where $L_0$ is a fixed element of $\mathcal T_{\pi}$.

\begin{theorem}\label{fibre of Hitchin map}
Let $s\in H^0(X,L^2)$ such that $X_s$ is singular and irreducible. Then $\cH^{-1}(s)$ is isomorphic to $\overline{\mathcal T_{\pi}}$.
\end{theorem}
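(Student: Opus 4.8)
The plan is to use the BNR correspondence (Theorem~\ref{BNR}) to identify $\cH^{-1}(s)$ with a sublocus of $\overline{\Jac^{d+d_L}(X_s)}$ and then to pin down exactly which torsion-free sheaves appear. Since $X_s$ is irreducible, Theorem~\ref{BNR} gives a bijection between isomorphism classes of rank~$1$ torsion-free sheaves $\cF$ on $X_s$ and isomorphism classes of $L$-twisted Higgs pairs $(V,\varphi)$ of rank~$2$ with $\tr(\varphi)=0$ and $\det(\varphi)=s$. Under this correspondence $V=\pi_*\cF$, so the fixed-determinant condition $\Lambda^2V\cong\Lambda$ translates (via the relation $\det(\pi_*\cF)\cong\cNm_\pi(\cF)\otimes L^{-1}$ of~(\ref{det and norm}), which extends to torsion-free sheaves on an integral curve) into the condition cutting out $\overline{\mathcal T_\pi}$. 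Concretely, the sheaves with $\det(\pi_*\cF)\cong\Lambda$ and the appropriate degree $d+d_L$ form precisely $\overline{\mathcal T_\pi}$, by its definition as the closure of $\mathcal T_\pi$ inside $\overline{\Jac^{d+d_L}(X_s)}$; one must check that this closed subscheme coincides with the full locus $\{\cF : \det(\pi_*\cF)\cong\Lambda\}$, which follows because $\det\circ\,\pi_*$ is a morphism $\overline{\Jac^{d+d_L}(X_s)}\to\Jac^d(X)$ and $\overline{\Jac^{d+d_L}(X_s)}$ is irreducible with $\Jac^{d+d_L}(X_s)$ dense (as recalled in Section~\ref{Compactified jacobian and GPLB}), so the fibre is irreducible of the expected dimension and equals the closure of its generic points.

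The remaining point — and the one I expect to be the main obstacle — is \emph{stability}: I must show that every $(V,\varphi)=(\pi_*\cF,\,\cdot\,\lambda)$ arising from $\cF\in\overline{\mathcal T_\pi}$ is semistable, so that it genuinely defines a point of $\cM^\Lambda_L$, and conversely that every point of $\cH^{-1}(s)$ is of this form. For the converse direction, if $(V,\varphi)\in\cH^{-1}(s)$ then $\varphi$ satisfies its characteristic polynomial $\varphi^2+s=0$, which lets one equip $V$ with the structure of an $\cO_{X_s}$-module; since $X_s$ is integral this module is torsion-free of rank~$1$, and applying $\pi_*$ recovers $(V,\varphi)$, so the correspondence is onto $\cH^{-1}(s)$. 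For semistability in the forward direction, suppose $N\subset V=\pi_*\cF$ is a line subbundle with $\varphi(N)\subset NL$. As in the proof of Proposition~\ref{fibre smooth case = Prym}, $N$ is then an eigen-subsheaf of $\varphi$; if the eigenvalue $\mu\in H^0(X,L)$ were a genuine function on $X$, then $\det(\varphi)=-\mu^2$ would force $X_s$ to be reducible, contradicting our hypothesis. The subtlety over the singular (but irreducible) $X_s$ is that $N$ corresponds to a subsheaf of $\cF$ supported over a sub-curve, and one has to rule out destabilizing such $N$ using irreducibility of $X_s$ together with the degree computation $\deg(\pi_*\cF)=\deg(\cF)-d_L$; this shows any $\varphi$-invariant line subbundle has degree $\le d/2$, giving semistability, and in fact the only way to get equality is the split situation, which is polystable.

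Once stability is in hand, I would assemble the argument as follows. First I would record that the BNR correspondence restricts to a bijection between $\overline{\mathcal T_\pi}$ and the set of (isomorphism classes of) semistable fixed-determinant pairs with $\det(\varphi)=s$, using the determinant/norm translation and the stability discussion above. Then I would upgrade this set-theoretic bijection to an isomorphism of schemes: the map $\overline{\mathcal T_\pi}\to\cH^{-1}(s)$, $\cF\mapsto(\pi_*\cF,\,\cdot\,\lambda)$, is a morphism because $\pi_*$ is defined functorially in families and $\overline{\mathcal T_\pi}$ carries a universal family; its inverse $(V,\varphi)\mapsto$ (the $\cO_{X_s}$-module determined by $\varphi$) is likewise a morphism; being mutually inverse morphisms, they are isomorphisms. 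This yields $\cH^{-1}(s)\cong\overline{\mathcal T_\pi}$ as claimed.
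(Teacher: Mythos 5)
Your overall strategy (BNR correspondence, determinant/norm translation, stability from irreducibility of $X_s$) matches the paper's for the easy inclusion, and your stability argument is essentially the paper's one-line argument: a $\varphi$-invariant line subbundle would produce an eigenvalue $\mu\in H^0(X,L)$ with $s=-\mu^2$, contradicting irreducibility. But you have misplaced the real difficulty. The crux of the theorem is not stability; it is the converse inclusion, namely that every rank $1$ torsion-free sheaf $\cF\in\overline{\Jac^{d+d_L}(X_s)}$ with $\det(\pi_*\cF)\cong\Lambda$ actually lies in $\overline{\mathcal T_\pi}$, i.e.\ in the \emph{closure} of the locus of line bundles $\mathcal T_\pi$. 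Your justification of this step is a non sequitur: from the irreducibility of $\overline{\Jac^{d+d_L}(X_s)}$ and the density of $\Jac^{d+d_L}(X_s)$ you conclude that the fibre of $\det\circ\pi_*$ over $\Lambda$ ``is irreducible of the expected dimension and equals the closure of its generic points.'' Irreducibility of the total space gives no such control on individual fibres: a fibre of a morphism from an irreducible variety can perfectly well have components contained entirely in the complement of a dense open set (think of the fibre over the centre of a blow-up). Worse, the asserted irreducibility of the fibre is in general false in this very setting: when all zeros of $s$ have even multiplicity (but $L\not\cong\mathcal O(D')$, so $X_s$ is still irreducible), the cover $\overline\pi$ is unramified, $\Prym_{\overline\pi}(\widetilde X_s)$ has two components, and by (\ref{Prym singular}) so does $P_\pi$ and hence $\overline{\mathcal T_\pi}$ — this is exactly the delicate case in the connectedness proof of Theorem~\ref{thm: fibre connected}.

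This is precisely the point where the paper brings in the parabolic modules of Cook: the morphisms $\tau$ and $\mathrm{pr}_1$ of (\ref{bundle GP and morphism tau to compact Jac for general singularities}) realize $\mathrm{PMod}_{\underline m}(\widetilde X_s)$ as a compactification of $\Jac(X_s)$ that \emph{fibres over} $\Jac(\widetilde X_s)$, and the paper deduces that any boundary sheaf satisfying the determinant condition maps under $\mathrm{pr}_1\circ\tau^{-1}$ into $\Prym_{\overline\pi}(\widetilde X_s)$ and therefore lies in $\overline{P_\pi}$. Your proposal never uses this (or any substitute for it). The gap could in principle be closed without parabolic modules by a homogeneity-plus-dimension argument — translation by $\pi^*N$ shows all fibres of $\det\circ\pi_*$, and of its restriction to the boundary, are isomorphic, forcing every component of the fibre over $\Lambda$ to have dimension $d_L+g-1$ while components contained in the boundary could have dimension at most $d_L+g-2$, so every component meets $\Jac^{d+d_L}(X_s)$ — but you give neither this argument nor the paper's, so as written the key step is unproven.
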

\proof
From the definition of $\mathcal T_{\pi}$, it is clear that $\mathcal T_{\pi}\subset \cH^{-1}(s)$. So elements in $\mathcal T_{\pi}$ are in one to one correspondence with $L$-twisted Higgs pairs $(V,\varphi)$ such that $\Lambda^2V\cong\Lambda$, $\det(\varphi)=s$ and $\tr(\varphi)=0$. All these are closed conditions, therefore if $\cF\in\overline{\mathcal T_{\pi}}$, the $L$-twisted Higgs pair $(V,\varphi)$ obtained from $\cF$ will also satisfy the same conditions. Finally, notice that, as in the smooth case, there cannot exist a $\varphi$-invariant line subbundle $N\subset V=\pi_*\mathcal F$, because that would contradict the irreducibility of $X_s$. This ensures that $(V,\varphi)$ is stable, hence $\overline{\mathcal T_{\pi}}\subset \cH^{-1}(s)$.

Conversely, let $(V,\varphi)\in \cH^{-1}(s)$. It is then identified with a rank $1$ torsion-free sheaf $\mathcal F\in\overline{\Jac^{d+d_L}(X_s)}$ such that $\det(\pi_*\mathcal F)\cong\Lambda$. Now, let $\mathcal F'\in\overline{\Jac(X_s)}$ represent the point corresponding to $\mathcal F$ under the isomorphism $\overline{\Jac(X_s)}\simeq\overline{\Jac^{d+d_L}(X_s)}$.
Then $\mathrm{pr}_1(\tau^{-1}(\mathcal F'))\subset\Prym_{\overline\pi}(\widetilde X_s)$ where $\mathrm{pr}_1$ and $\tau$ are the morphisms in (\ref{bundle GP and morphism tau to compact Jac for general singularities}). But $\mathrm{pr}_1$ is a bundle which is a fibrewise compactification of the bundle $\widetilde\pi^*$, so, when suitably restricted, it also compactifies the bundle (\ref{Prym singular}).
Hence $\mathcal F'\in\overline{P_\pi}$ i.e. $\mathcal F\in\overline{\mathcal T_{\pi}}$.
\endproof

The dimension of the fibre of $\cH$ in the case treated here can be easily computed.

\begin{proposition}
\label{prop:dimension singular fibre irreducible spectral curve} 
Let $s\in H^0(X,L^2)$ be such that $X_s$ is irreducible. 
Then $\dim(\cH^{-1}(s))=d_L+g-1$.
\end{proposition}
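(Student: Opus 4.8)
The plan is to compute the dimension of $\cH^{-1}(s)$ using the isomorphism $\cH^{-1}(s)\cong\overline{\mathcal T_{\pi}}$ from Theorem~\ref{fibre of Hitchin map}, together with the fact that $\mathcal T_{\pi}$ is dense in $\overline{\mathcal T_{\pi}}$ (since $\Jac^{d+d_L}(X_s)$ is dense in $\overline{\Jac^{d+d_L}(X_s)}$, the singularities of $X_s$ being planar). Because taking closures does not change dimension, it suffices to compute $\dim\mathcal T_{\pi}$, equivalently $\dim P_\pi$ by the torsor isomorphism $\overline{\mathcal T_{\pi}}\cong\overline{P_\pi}$.

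First I would use the short exact sequence (\ref{Prym singular}), which realizes $P_\pi$ as the total space of a bundle over $\Prym_{\overline\pi}(\widetilde X_s)$ with fibre $(\C^*)^{r_1}\times\C^{N}$, where $N=\sum_{i=1}^{r_1}(m_i-2)/2+\sum_{j=1}^{r_2}(m'_j-1)/2=\deg(D'_s)-r_1$ (using (\ref{D's})). Hence
\begin{displaymath}
  \dim P_\pi=\dim\Prym_{\overline\pi}(\widetilde X_s)+r_1+N=\dim\Prym_{\overline\pi}(\widetilde X_s)+\deg(D'_s).
\end{displaymath}
Next I would identify $\dim\Prym_{\overline\pi}(\widetilde X_s)=g(\widetilde X_s)-g$: the Prym of a double cover $\overline\pi:\widetilde X_s\to X$ has dimension $g(\widetilde X_s)-g$ (it is principally polarized of this dimension in the ramified case, and has this dimension in the unramified case as well, $\overline\pi$ being ramified over the odd part of $D_s$). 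Then, invoking the Remark following (\ref{eq:ses-Ds}), namely $g(X_s)=g(\widetilde X_s)+\deg(D'_s)$, I get
\begin{displaymath}
  \dim P_\pi=g(\widetilde X_s)-g+\deg(D'_s)=g(X_s)-g.
\end{displaymath}
Finally, the arithmetic genus of $X_s$ is $g(X_s)=2g-1+d_L$, exactly as in (\ref{genus spectral curve}) — this formula holds for the arithmetic genus regardless of smoothness, since $\pi_*\mathcal O_{X_s}\cong\mathcal O_X\oplus L^{-1}$ gives $\chi(\mathcal O_{X_s})=\chi(\mathcal O_X)+\chi(L^{-1})$ and hence $1-g(X_s)=(1-g)+(1-g-d_L)$. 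Therefore $\dim(\cH^{-1}(s))=\dim\overline{P_\pi}=\dim P_\pi=g(X_s)-g=g-1+d_L$, as claimed.

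There is essentially no hard step here: the whole argument is bookkeeping with the exact sequences and genus formulas already established. The one point that deserves a sentence of care is the remark that $\dim\overline{\mathcal T_{\pi}}=\dim\mathcal T_{\pi}$, which uses the irreducibility (hence density of the open part) of the compactified Jacobian for planar curve singularities, already recorded in Section~\ref{Compactified jacobian and GPLB}; and one should note that the dimension count via (\ref{Prym singular}) is insensitive to whether $\overline\pi$ is ramified or not, so the formula is uniform across all irreducible singular $X_s$.
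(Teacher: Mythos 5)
Your argument is correct and follows essentially the same route as the paper: both reduce to $\dim\overline{\mathcal T_\pi}=\dim P_\pi$ via Theorem~\ref{fibre of Hitchin map} and the sequence (\ref{Prym singular}), with $\dim\Prym_{\overline\pi}(\widetilde X_s)=g(\widetilde X_s)-g$. The only (inessential) difference is bookkeeping: the paper computes $g(\widetilde X_s)=2g-1+r_2/2$ by Riemann--Hurwitz and sums the local contributions directly, whereas you route the count through the arithmetic genus $g(X_s)=g(\widetilde X_s)+\deg(D'_s)=2g-1+d_L$.
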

\proof If $X_s$ is smooth, this is just (\ref{dimension of generic
  fibre}). Thus it suffices to consider the case when $X_s$ is
singular and irreducible.  
The ramification divisor of
$\overline\pi:\widetilde X_s\to X$ is $\widetilde
D_s=\sum_{j=1}^{r_2}q_j$. Therefore, by the Riemann-Hurwitz' formula, the genus of $\widetilde X_s$ is
\begin{equation}\label{genus of normalization}
 g(\widetilde X_s)=2g-1+r_2/2,
\end{equation}
where we recall that $r_2$ is even (see Remark \ref{r2 even}).

From (\ref{Prym singular}), $$\dim\overline{P_\pi}=\dim P_\pi=\dim\Prym_{\overline\pi}(\widetilde X_s)+\sum_{i=1}^{r_1}m_i/2+\sum_{j=1}^{r_2}(m'_j-1)/2$$
and $\dim\Prym_{\overline\pi}(\widetilde X_s)=g(\widetilde X_s)-g$. 
Hence from (\ref{genus of normalization}), $$\dim\Prym_{\overline\pi}(\widetilde X_s)=g-1+r_2/2.$$ 
Therefore, from Theorem~\ref{fibre of Hitchin map}, the dimension of
the fibre of $\cH$ over $s$ is
$$\dim\overline{\mathcal T_{\pi}}=\dim\overline{P_\pi}=g-1+\frac{1}{2}\left(\sum_{i=1}^{r_1}m_i+\sum_{i=1}^{r_2}m'_j\right)=d_L+g-1.$$
\endproof

\subsection{Connectedness of the fibre}
Now we can prove that the fibre of $\cH$ is connected.

\begin{theorem}\label{thm: fibre connected}
  Assume $\deg(L)>0$ and let $s\in H^0(X,L^2)$ be such that $X_s$ is
  irreducible. Then the fibre of $\cH:\cM_L^\Lambda\to H^0(X,L^2)$
  over $s$ is connected.
\end{theorem}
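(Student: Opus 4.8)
The plan is to deduce connectedness of $\cH^{-1}(s)$ from its identification with $\overline{\cT_\pi}$ in Theorem~\ref{fibre of Hitchin map}, and hence it suffices to prove that $\overline{P_\pi}$ is connected (since $\overline{\cT_\pi}$ is a torsor over it). Because $P_\pi$ is dense in $\overline{P_\pi}$, it is enough to show that $P_\pi$ itself is connected. Here I would use the short exact sequence (\ref{Prym singular}): $P_\pi$ fibres over $\Prym_{\overline\pi}(\widetilde X_s)$ with fibre $(\C^*)^{r_1}\times\C^{\bullet}$, which is connected, and the base $\Prym_{\overline\pi}(\widetilde X_s)$ is connected by Remark~\ref{rem:prym-components} since $\overline\pi\colon \widetilde X_s \to X$ is ramified (it is ramified over the odd part $\widetilde D_s$ of $D_s$, and $r_2 \geq 1$ exactly when there is a cusp; when there are no cusps $\overline\pi = \pi$ is still ramified over the $r_1 \geq 1$ nodes — wait, one must check this). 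So $P_\pi$ is an extension of connected spaces and is connected, hence $\overline{P_\pi}$ is connected, hence $\cH^{-1}(s) \cong \overline{\cT_\pi}$ is connected.

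However, there is a subtlety I expect to be the real point of the proof, and it is foreshadowed by Example~\ref{intersection of fibres}: the argument via (\ref{Prym singular}) only shows $P_\pi$ (the open part, rank-one line bundles) is connected, and a priori the \emph{boundary} $\overline{P_\pi}\setminus P_\pi$ could fail to be connected to $P_\pi$ — but since $P_\pi$ is dense, the closure is automatically connected, so this is fine. The more delicate issue is if one instead tries to prove connectedness directly through the parabolic-module compactification $\mathrm{PMod}_{\underline m}(\widetilde X_s)$: that space fibres over $\Jac(\widetilde X_s)$ with fibre $\prod P(A_{m_i-1})\times\prod P(A_{m'_j-1})$, and Cook's result (\cite[Proposition~2]{cook:1998}) says each $P(A_{m-1})$ is connected. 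The suitably restricted version $\tau^{-1}(\overline{P_\pi})$ fibres over $\Prym_{\overline\pi}(\widetilde X_s)$ with connected fibre, hence is connected; and since $\tau$ is a surjective (finite) morphism onto $\overline{P_\pi}$, the image $\overline{P_\pi}$ is connected. This is really the cleanest route, and it is the one I would present, because it does not even require knowing the fibre structure of $P_\pi$ away from the boundary.

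Concretely, the steps I would carry out are: (i) By Theorem~\ref{fibre of Hitchin map}, $\cH^{-1}(s)\cong\overline{\cT_\pi}$, and since $\overline{\cT_\pi}$ is a torsor over $\overline{P_\pi}$, it is connected iff $\overline{P_\pi}$ is. (ii) Consider the restriction of the diagram (\ref{bundle GP and morphism tau to compact Jac for general singularities}) over $\Prym_{\overline\pi}(\widetilde X_s)\subset\Jac(\widetilde X_s)$, i.e.\ the subvariety $\mathrm{PMod}_{\underline m}(\widetilde X_s)|_{\Prym} := \mathrm{pr}_1^{-1}(\Prym_{\overline\pi}(\widetilde X_s))$; by the fibrewise-compactification statement, $\tau$ maps this onto $\overline{P_\pi}$. (iii) $\mathrm{PMod}_{\underline m}(\widetilde X_s)|_{\Prym}$ is a fibre bundle over $\Prym_{\overline\pi}(\widetilde X_s)$ with fibre $\prod_i P(A_{m_i-1})\times\prod_j P(A_{m'_j-1})$; each factor is connected by \cite[Proposition~2, p.~46]{cook:1998}, so the fibre is connected; and the base $\Prym_{\overline\pi}(\widetilde X_s)$ is connected because $\overline\pi\colon\widetilde X_s\to X$ is a ramified double cover (Remark~\ref{rem:prym-components}) — ramified over $\widetilde D_s$, which is nonempty precisely because $X_s$ is singular (it has at least one singular point, which is either a cusp, contributing to $\widetilde D_s$, or a node; if only nodes occur then $\overline\pi=\pi$ is ramified over the original ramification divisor $D_s$ which is always nonempty since $\deg D_s = 2d_L > 0$). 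Hence $\mathrm{PMod}_{\underline m}(\widetilde X_s)|_{\Prym}$ is connected. (iv) A continuous surjective image of a connected space is connected, so $\overline{P_\pi}=\tau(\mathrm{PMod}_{\underline m}(\widetilde X_s)|_{\Prym})$ is connected, and therefore so is $\cH^{-1}(s)$. The main obstacle, and the one requiring care in the write-up, is justifying step (ii)–(iii): one must check that the "suitable restriction" of $\mathrm{pr}_1$ and $\tau$ to the Prym locus really does produce a fibre bundle over $\Prym_{\overline\pi}(\widetilde X_s)$ with the stated fibre and that $\tau$ still surjects onto $\overline{P_\pi}$ — this uses that $\mathrm{pr}_1$ in (\ref{bundle GP and morphism tau to compact Jac for general singularities}) is genuinely a locally trivial fibre bundle (so restriction to a connected subvariety of the base keeps the fibre unchanged) together with the density of $P_\pi$ in $\overline{P_\pi}$ and the compatibility of $\tau$ with the bundle maps $\widetilde\pi^*$ established in Section~\ref{The singular case}.
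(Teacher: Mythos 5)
There is a genuine gap, and it sits exactly at the point you flagged with ``wait, one must check this'' and then resolved incorrectly. Your argument rests on the claim that $\overline\pi\colon\widetilde X_s\to X$ is always a ramified double cover, so that $\Prym_{\overline\pi}(\widetilde X_s)$ is connected. But $\overline\pi$ is ramified exactly over the \emph{odd} part of $D_s$ (this is stated in Section~\ref{sec:prym-normalization}), and when there are no cusps it is \emph{not} true that $\overline\pi=\pi$: the normalization $\widetilde\pi$ separates the two branches at each node, so over a zero of $s$ of even multiplicity the fibre of $\overline\pi$ consists of two distinct points and there is no ramification there. The problematic case is $D_s=2D'$ with $L\cong\mathcal{O}(D')\otimes N$ for a \emph{nontrivial} $2$-torsion bundle $N$: then $X_s$ is still irreducible (Remark~\ref{rem:singular-smooth-X_s}), has only nodes, and $\overline\pi$ is a nontrivial unramified double cover, so by Mumford (Remark~\ref{rem:prym-components}) $\Prym_{\overline\pi}(\widetilde X_s)$ has \emph{two} connected components. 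In that case your steps (ii)--(iv) break down: $\mathrm{pr}_1^{-1}(\Prym_{\overline\pi}(\widetilde X_s))$ has two components (connected fibres over a disconnected base), and a finite surjection from a space with two components onto $\overline{P_\pi}$ only shows $\overline{P_\pi}$ has at most two components, not that it is connected. The same problem afflicts your first-paragraph route through the sequence (\ref{Prym singular}): $P_\pi$ itself is disconnected in this case, so density of $P_\pi$ in $\overline{P_\pi}$ gives nothing.

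This unramified case is precisely the substance of the paper's proof. There, after disposing of the ramified case exactly as you do, the authors show that the images under $\tau$ of the two components of $\tau^{-1}(\overline{P_\pi})$ actually \emph{intersect} inside $\overline{P_\pi}$: the boundary sheaves (non-invertible rank~$1$ torsion-free sheaves) glue the closures of the two components of $P_\pi$ together. This is done via the explicit parabolic modules of Example~\ref{intersection of fibres} — e.g.\ for $D_s=mp$ with $m\equiv 2\bmod 4$ one exhibits a single sheaf $\mathcal F$ lying in both $\tau(\mathrm{pr}_1^{-1}(\mathcal O_{\widetilde X_s}))$ and $\tau(\mathrm{pr}_1^{-1}(\mathcal O_{\widetilde X_s}((m/2)p_1-(m/2)p_2)))$, and one checks using the parity criterion (\ref{LinPrym0})--(\ref{L(q-sigmq)inPrym1}) that these two line bundles lie in \emph{different} components of $\Prym_{\overline\pi}(\widetilde X_s)$; the case $m\equiv 0\bmod 4$ needs a modified subspace $U(\mathcal O_{\widetilde X_s}(p_1-p_2))$, and the general case repeats the local construction at each singularity. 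To repair your proof you would need to add this (or an equivalent) argument; without it the statement is not established in the unramified subcase, which is a genuinely occurring configuration of irreducible singular spectral curves.
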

\proof 
If $D_s=\divisor(s)$ has no multiple points, $\cH^{-1}(s)=\mathcal
T_{\pi}\cong\Prym_{\pi}(X_s)$ hence connected. (Here we use that $X_s\to X$
is ramified because $\deg(L)>0$, cf.\ Remark~\ref{rem:prym-components}.)

Suppose now that $D_s$ has multiple points.  From (\ref{bundle GP and
  morphism tau to compact Jac for general singularities}) we have the
morphisms $\tau$ and $\mathrm{pr}_1$
\begin{equation}\label{bundle GP and morphism tau to compact Prym for general singularities}
 \xymatrix{\tau^{-1}(\overline{P_\pi})\ar[r]^{\mathrm{pr}_1}\ar[d]^{\tau}&\Prym_{\overline\pi}(\widetilde X_s)\\
\overline{P_\pi}&}.
\end{equation} 

Assume that $\Prym_{\overline\pi}(\widetilde X_s)$ is connected. This occurs if and only if the cover $\overline\pi:\widetilde X_s\to X$ is ramified, which is equivalent to saying that $D_s$ has some point with odd multiplicity.
From sequence (\ref{Prym singular}), $P_\pi$ is connected hence $\overline{P_\pi}$ is connected, because
$P_\pi$ is dense in $\overline{P_\pi}$ (recall that $\Jac(X_s)$ is dense in $\overline{\Jac(X_s)}$).

Suppose now that $\Prym_{\overline\pi}(\widetilde X_s)$ is not
connected, i.e., that $D_s$ is twice another divisor.  In
\cite{mumford:1971} Mumford shows that
$\Prym_{\overline\pi}(\widetilde X_s)$ has two components, which we
denote by $\Prym_{\overline\pi}^0(\widetilde X_s)$ and
$\Prym_{\overline\pi}^1(\widetilde X_s)$, and that the difference
between them lies in the parity of the dimension of the space of
sections:
$$\Prym_{\overline\pi}^0(\widetilde X_s)=\{F\in\Prym_{\overline\pi}(\widetilde X_s)\suchthat\dim H^0(\widetilde X_s,F\otimes\overline{\pi}^*F_0)\text{ even}\}$$
and
$$\Prym_{\overline\pi}^1(\widetilde X_s)=\{F\in\Prym_{\overline\pi}(\widetilde X_s)\suchthat\dim H^0(\widetilde X_s,F\otimes\overline{\pi}^*F_0)\text{ odd}\}$$
where $F_0$ is a fixed square root of the canonical line bundle $K$ of $X$.
Moreover, given $q\in\widetilde X_s$ and
\begin{equation}\label{LinPrym0}
F\in\Prym_{\overline\pi}^i(\widetilde X_s)
\end{equation}
then
\begin{equation}\label{L(q-sigmq)inPrym1}
F(q-\tilde\sigma(q))\in\Prym_{\overline\pi}^{1-i}(\widetilde X_s),
\end{equation}
where $\tilde\sigma:\widetilde X_s\to\widetilde X_s$ is the involution exchanging the sheets of the double cover $\overline\pi:\widetilde X_s\to X$.

Returning to $P_\pi$, we see from (\ref{Prym singular}) that $P_\pi$
has two components, because it is a bundle over a space with two
connected components with connected fibre. Moreover, since
$\mathrm{pr}_1$ in (\ref{bundle GP and morphism tau to compact Prym
  for general singularities}) is a bundle which compactifies fibrewise
$\widetilde\pi^*$ in (\ref{Prym singular}), the space 
$\tau^{-1}(\overline{P_\pi})$ also has two connected components. We
shall use the construction of Example~\ref{intersection of fibres} to
show that the images under $\tau$ of these two components intersect in
$\overline{P_{\pi}}$. We begin by considering two special cases.

\subsubsection*{Case 1}

Suppose that $D_s=mp$ with
\begin{math}\label{m=2mod4}
m=2 \mod 4.
\end{math}
Let
\begin{displaymath}
  F_0 = \mathrm{pr}_1^{-1}(\mathcal O_{\widetilde{X}_s})
  \qquad\text{and}\qquad
  F_1 = \mathrm{pr}_1^{-1}(\mathcal O_{\widetilde X_s}((m/2)p_1-(m/2)p_2)).
\end{displaymath}
It follows from Example~\ref{intersection of fibres} that $\tau(F_0)$
intersects $\tau(F_1)$ because, by (\ref{does not fibre}), we have
that $\mathcal F$ defined in (\ref{L}) belongs to $\tau(F_0)\cap
\tau(F_1)$.  Now, since $\widetilde\pi^{-1}(p)=\{p_1,p_2\}$, we have
$\tilde\sigma(p_1)=p_2$. Furthermore $ \mathcal O_{\widetilde
  X_s}\in\Prym_{\overline\pi}^0(\widetilde X_s) $. Thus it follows
from (\ref{LinPrym0}), (\ref{L(q-sigmq)inPrym1}) and (\ref{m=2mod4})
that $$\mathcal O_{\widetilde
  X_s}((m/2)p_1-(m/2)p_2)\in\Prym_{\overline\pi}^1(\widetilde X_s).$$
The conclusion is that the images under $\tau$ of the two components
of $\tau^{-1}(\overline{P_\pi})$ intersect. Therefore
$\overline{P_\pi}$ is connected and hence the same holds for
$\overline{\mathcal T_{\pi}}$.

\subsubsection*{Case 2}

Suppose that $D_s=mp$ with $m=0\mod 4$.  Take again
$F_0=\mathrm{pr}_1^{-1}(\mathcal O_{\widetilde X_s})$, but now
consider $(\mathcal O_{\widetilde X_s}(p_1-p_2),U(\mathcal
O_{\widetilde X_s}(p_1-p_2)))\in F_1=\mathrm{pr}_1^{-1}(\mathcal
O_{\widetilde X_s}(p_1-p_2))$, where
\begin{multline*}
  U(\mathcal O_{\widetilde{X}_s}(p_1-p_2))=
  \{(v_1^1,v_2^1,\ldots,v_{m-1}^{m/2},v_m^{m/2})
    \in(\C_{p_1}\oplus\C_{p_2})^{m/2} \suchthat \\ 
  v_1^1=0 \text{ and }v_i^{i/2}=0,\text{ for } 2\leq i\leq m-2\text{ even}\}.
\end{multline*}
Then $U(\mathcal O_{\widetilde X_s}(p_1-p_2))$ is an $m/2$-dimensional
subspace of $(\C_{p_1}\oplus\C_{p_2})^{m/2}$ and it is an $\mathcal
O_{X_s,p}$-module.  Using very similar arguments to those of Example
\ref{intersection of fibres}, one sees again that $\tau(F_0)$
intersects $\tau(F_1)$. Moreover, by (\ref{L(q-sigmq)inPrym1}), $F_0$
and $F_1$ are fibres over different components of
$\Prym_{\overline\pi}(\widetilde X_s)$. Hence the images under $\tau$
of the two components of $\tau^{-1}(\overline{P_\pi})$ intersect and
we conclude that $\overline{P_\pi}$ and $\overline{\mathcal T_{\pi}}$
are connected.

The general case is proved by using the appropriate local construction
at each of the singularities to construct $F_0$ and $F_1$
contained in different components of $\tau^{-1}(\overline{P}_{\pi})$
whose images under $\tau$ intersect in $\overline{P}_{\pi}$.
\endproof

\section{The fibre of the Hitchin map for reducible spectral curve}\label{The non-integral case}

In this section we shall analyse the fibre of
$\cH:\cM_L^\Lambda\to H^0(X,L^2)$ over the sections $s$
such that the spectral curve $X_s$ is reducible. We shall resort to
a direct method for describing the fibre $\cH^{-1}(s)$.  We start by
constructing certain fibre bundles over the Jacobian, which will be
used in the analysis of the fibre of the Hitchin map. This is carried
out in Section~\ref{sec:strat-fibre-hitchin}, where we also state a
structure theorem about the fibre. The proof of this theorem takes up
the next two sections. In Section~\ref{sec:proof-lemma-morphism} we
construct families of Higgs pairs over these fibre bundles, thus
obtaining morphisms from their total spaces to the fibre. Having
established that, in Section~\ref{sec:proof-lemma-surjective} we show
that these families give rise to a stratification of the
fibre. Finally, in
Section~\ref{sec:connectedness-dimension-fibre-redicble-case}, we put
the results of the preceding sections together to obtain the
connectedness and dimension results for the fibre in the case of
reducible spectral curve.

Recall from Remark~\ref{rem:singular-smooth-X_s} that $X_s$ is reducible if
and only if the section $s\in H^0(X,L^2)$ admits a square root $s'\in
H^0(X,L)$. Throughout this section we fix such a
square root. Thus we have
\begin{equation}\label{widetildeD}
D_s=2 D',
\end{equation}
and $L\cong\mathcal{O}(D')$, where where $D_s$ and $D'$ are the
(effective) divisors of $s$ and $s'$ respectively. 

\subsection{Stratification of the fibre of the Hitchin map}
\label{sec:strat-fibre-hitchin}

First we introduce some notation.  
For any effective divisor $D$ on $X$
and any line bundle $M\in\Jac^m(X)$ of
degree $m$, 
define subspaces of $H^0(D',M^2L\Lambda^{-1})$ as follows:
\begin{align}\label{E(D,M)}
  E(D,M)&=
  \bigl\{q \in H^0(D',M^2L\Lambda^{-1})\;|\;
  q_{|D'-D} = 0\}, \\
  F(D,M)&=
  \bigl\{q \in H^0(D',M^2L\Lambda^{-1})\;|\;
  \begin{cases}
      \mathrm{ord}_p(q)=D'(p)- D(p) &\text{if
      $0 < D(p) \leq D'(p)$}\\
    q(p)=0 & \text{otherwise}
  \end{cases}
  \bigr\}.
  \label{subspace cal W}
\end{align}

\begin{remark}\label{restriction to a divisor 1}
  Recall that if $\Delta=\sum n_ip_i$ is any effective divisor in $X$,
  then, choosing a local coordinate $z_i$ centred at $p_i$, a global
  section of $\mathcal{O}_{\Delta}$ can be written as $\sum f_i(z)$ where
  $f_i(z)=\sum_{k=0}^{n_i-1}a_kz_i^k$. Thus, if we choose a local
  coordinate $z$ around each $p\in\Supp(D')$, the space $E(D,M)$
  consists of sections of the form
\begin{displaymath}
\sum_{p\in\Supp(D')}\sum_{k=D'(p)-D(p)}^{ D'(p)-1}a_kz^k
\end{displaymath}
and the space $F(D,M)$ consists of such sections with 
$a_{D'(p)-D(p)}\neq 0$ (we interpret an empty sum as being equal to zero).
\end{remark}

Note that the space $E(D,M)$ is a linear subspace of
$H^0(D',M^2L\Lambda^{-1})$, while $F(D,M)$ does not
contain zero unless $D = 0$, in which case $F(D,M)=0$. We gather some
obvious observations about these spaces in the following Proposition.

\begin{proposition}
  \label{prop:properties-EDM}
    (1) The spaces $E(D,M)$ give a filtration of
    $H^0(D',M^2L\Lambda^{-1})$ indexed by divisors $D$ satisfying
    $0\leq D\leq D'$. Thus
    \begin{displaymath}
      D_1 \leq D_2 \;\implies\; E(D_1,M) \subseteq E(D_2,M).
    \end{displaymath}
    Moreover, $E(D,M)=0$ if and only if
    $D=0$, and $E(D,M) = H^0(D',M^2L\Lambda^{-1})$ if and only if
    $D\geq D'$.

    (2) The space $E(D,M)$ is the disjoint union
    \begin{displaymath}
      E(D,M) = \bigcup_{0\leq\Dbar\leq D}F(\Dbar,M).
    \end{displaymath}

    (3) For any two effective divisors $D_1$ and $D_2$ on $X$, we have
    \begin{align*}
      E(D_1,M) \cap E(D_2,M) &= E(\min\{D_1,D_2\},M), \\
      E(D_1,M) \cup E(D_2,M) &= E(\max\{D_1,D_2\},M).
    \end{align*}

\end{proposition}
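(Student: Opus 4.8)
The plan is to reduce everything to the local description in Remark~\ref{restriction to a divisor 1}, which presents a section $q\in H^0(D',M^2L\Lambda^{-1})$ as a collection of truncated power series $\sum_k a_k z^k$ indexed by points $p\in\Supp(D')$, one series of length $D'(p)$ at each such point. Under this identification, the conditions cutting out $E(D,M)$ and $F(D,M)$ are purely local and point-by-point: $q\in E(D,M)$ means $a_k=0$ for all $k< D'(p)-D(p)$ at each $p$, while $q\in F(D,M)$ means moreover $a_{D'(p)-D(p)}\neq 0$ at each $p$ with $0<D(p)\leq D'(p)$. So the whole Proposition decomposes as an assertion about each point of $\Supp(D')$ separately, and at a single point it is an elementary statement about truncated polynomials with a prescribed vanishing order; I would state this local reduction once and then verify (1)--(3) directly from it.

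For part (1): if $D_1\leq D_2$ then $D'(p)-D_1(p)\geq D'(p)-D_2(p)$ at each $p$, so the vanishing conditions defining $E(D_1,M)$ are stronger, giving the inclusion. That $E(D,M)$ is linear is clear since it is defined by the vanishing of a subset of the coordinates $a_k$. The section $E(D,M)=0$ iff $D=0$: if $D=0$ the empty-sum convention forces every series to be zero; conversely if $D(p)>0$ for some $p$ then $a_{D'(p)-D(p)}$ is a free coordinate (note $D'(p)-D(p)\leq D'(p)-1$ when $D(p)\geq 1$ and $D(p)\leq D'(p)$, and the condition at a point with $D(p)>D'(p)$ is just $q(p)$ unconstrained, so such points carry the full local space), so $E(D,M)\neq 0$. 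Similarly $E(D,M)$ is everything iff at each $p$ no coordinate is killed, i.e. $D'(p)-D(p)\leq 0$, i.e. $D\geq D'$.

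For part (2): a section $q$ of $E(D,M)$ has, at each $p$, a well-defined order of vanishing $\ord_p(q)\in\{D'(p)-D(p),\dots,D'(p)-1\}$ or $q$ vanishes identically at $p$; set $\Dbar(p)=D'(p)-\ord_p(q)$ where $q\not\equiv 0$ locally (note this gives $1\leq\Dbar(p)\leq D(p)$) and $\Dbar(p)=0$ otherwise. Then $0\leq\Dbar\leq D$ and by construction $q\in F(\Dbar,M)$; this assignment $q\mapsto\Dbar$ is single-valued, so the union is disjoint, and conversely each $F(\Dbar,M)$ with $\Dbar\leq D$ sits inside $E(D,M)$ by part (1)-type reasoning. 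For part (3): the intersection statement is immediate because imposing the conditions for $D_1$ and for $D_2$ simultaneously, at each point, means killing coordinates $a_k$ for $k<\max\{D'(p)-D_1(p),\,D'(p)-D_2(p)\}=D'(p)-\min\{D_1(p),D_2(p)\}$; and the union statement $E(D_1,M)\cup E(D_2,M)=E(\max\{D_1,D_2\},M)$ follows from part (1), since one of $D_1,D_2$ equals $\min\{D_1,D_2\}$ at each given point but the two minima may occur at different points — so I should be slightly careful here and argue instead that since the $E(D,M)$ for $0\leq D\leq D'$ form a (finite, distributive) lattice under inclusion that is order-isomorphic to the lattice of divisors $D$ with $0\leq D\leq D'$ (using part (1) together with the fact, visible from the local picture, that $D_1\leq D_2\iff E(D_1,M)\subseteq E(D_2,M)$ once we truncate $D_i$ by $D'$), the join $E(D_1,M)\cup E(D_2,M)$ — interpreted as the smallest $E$ containing both — is $E(\max\{D_1,D_2\},M)$.

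The only mild subtlety, and the place I would be most careful, is exactly this last point: the set-theoretic union of two linear subspaces need not be a subspace, so the identity $E(D_1,M)\cup E(D_2,M)=E(\max\{D_1,D_2\},M)$ as literally stated is false unless one of $E(D_1,M),E(D_2,M)$ contains the other — which happens precisely when $D_1,D_2$ are comparable after truncation by $D'$. I expect the intended reading is that $\cup$ denotes the join in the lattice of the spaces $E(D,M)$ (equivalently the span), under which the statement is correct and follows formally from part (1) and the order-isomorphism with divisors; I would insert a one-line clarification to this effect, since under the literal reading part (3)'s union statement needs the comparability hypothesis. Everything else is a direct unwinding of Remark~\ref{restriction to a divisor 1}.
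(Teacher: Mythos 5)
Your proof is correct, and in fact the paper offers no argument at all for this Proposition: it is introduced as a list of ``obvious observations'', with the local description of Remark~\ref{restriction to a divisor 1} serving as the implicit justification. Your point-by-point unwinding of that remark is exactly the intended reasoning, so the only remark to make is that you have supplied the details the paper omits. Your caveat about the second identity in (3) is also well taken: with $\cup$ read as set-theoretic union the statement fails whenever the truncations $\min\{D_1,D'\}$ and $\min\{D_2,D'\}$ are incomparable --- for instance $D'=p_1+p_2$ with $p_1\neq p_2$, $D_1=p_1$, $D_2=p_2$ gives two coordinate lines inside $E(\max\{D_1,D_2\},M)\cong\C^2$, whose union misses every section non-vanishing at both points --- whereas it does hold if $\cup$ is read as the span $E(D_1,M)+E(D_2,M)$, equivalently the join in the lattice of the subspaces $E(D,M)$: since $H^0(D',M^2L\Lambda^{-1})$ is a direct sum of its local pieces over $\Supp(D')$, any $q\in E(\max\{D_1,D_2\},M)$ splits as $q_1+q_2$ by assigning the local part at each $p$ to whichever $E(D_i,M)$ imposes the weaker condition there. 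Since the paper never uses this union identity elsewhere (the unions it actually relies on are the disjoint one in (2) and the one in the proof of Lemma~\ref{lem:Em-connected}, which follows from (1) alone), your proposed one-line clarification is harmless and appropriate rather than a gap in your own argument.
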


\begin{remark}
  \label{rem:C-star-action-FDM}
  If $D\neq 0$ then $F(D,M)$ is a
  linear subspace of $H^0(D',M^2L\Lambda^{-1})$ with a hyperplane
  removed.  Thus $\C^*$ acts by multiplication on $F(D,M)$ and
  \begin{equation}\label{eq:dimcal W/C}
    \dim F(D,M)/\C^*=\deg(D)-1.
  \end{equation}
\end{remark}

\begin{definition}\label{bundle E(D,m)}
  Let $D$ be a divisor satisfying $0 \leq {D} \leq D'$ and let $m$ be
  an integer. We denote by 
  \begin{equation}
    \mathcal{E}(D,m)\longrightarrow\Jac^m(X)
  \end{equation}
  the vector bundle (constructed using the degree $m$ Poincar\'e
  line bundle) whose fibre over $M\in\Jac^m(X)$ is $E(D,M)$ and by
  \begin{equation}
    \label{cal E i,l}
    \mathcal{F}(D,m)\longrightarrow\Jac^m(X)
  \end{equation}
  the subbundle\footnote{Note that this bundle is not a vector
    bundle.}, whose fibre over $M$ is $F(D,M)$. In particular,
  \begin{math}
    \mathcal F(0,m) = \mathcal E(0,m) = \Jac^{m}(X).
  \end{math}

\end{definition}

\begin{remark}
  \label{rem:properties-EDM}
  Clearly the properties stated in
  Proposition~\ref{prop:properties-EDM} and
  Remark~\ref{rem:C-star-action-FDM} give rise to analogous properties
  for $\mathcal{E}(D,m)$ and $\mathcal{F}(D,m)$.
\end{remark}

\begin{remark}
  Notice that, if $d$ is even, the only compact $\mathcal E(D,m)$ is
  $\mathcal E(0,d/2)$, whereas if $d$ is odd, the compact $\mathcal
  E(D,m)$ are those of the form $\mathcal E(D,(d-1)/2)$, i.e.\ those
  such that $\deg(D)=1$.
\end{remark}

Now we state the relation between the spaces $\mathcal{E}(D,m)$ and
$\mathcal{F}(D,m)$ just introduced and the fibre of the Hitchin map.

\begin{theorem}\label{prop:fibre when spectral curve is reducible}
  Let $s\in H^0(X,L^2)\setminus\{0\}$ be such that $D_s=2 D'$ and
  $L=\mathcal{O}(D')$. For any integer $m$ and any effective divisor
  $D \leq D'$ such that
  \begin{equation}\label{eq:bound-m}
    d/2-\deg(D) \leq m \leq d/2,
  \end{equation}
  there is a morphism
  \begin{math}
    p\colon \mathcal{E}(D,m) \to \mathcal{H}^{-1}(s)
  \end{math}
  with the following properties:
  \begin{enumerate}
  \item The union of the images
    \begin{math}
      p(\mathcal{E}(D,m))
    \end{math}
    over all $(D,m)$ satisfying the above conditions covers the fibre
    $\mathcal{H}^{-1}(s)$.
  \item If $m_1$ satisfies (\ref{eq:bound-m}) for a fixed divisor $D$,
    then so does $m_2 = d - \deg(D) - m_1$ and
    \begin{displaymath}
      p(\mathcal{F}(D,m_1)) = p(\mathcal{F}(D,m_2)).
    \end{displaymath}
  \item If $D \neq 0$, then there is a fibrewise $\C^*$-action on
    $\mathcal{F}(D,m)$ and the restriction of $p$ factors through the
    quotient to induce a morphism
    \begin{displaymath}
      p\colon\mathcal{F}(D,m)/\C^* \longrightarrow \mathcal{H}^{-1}(s).
    \end{displaymath}
    This morphism is an isomorphism onto its image, unless $m =
    (d-\deg(D))/2$. In the latter case it is generically two to one,
    ramified at the preimage in $\mathcal{F}(D,m)$ of the locus of
    line bundles $M \in \Jac^m(X)$ satisfying $M^2 \cong \Lambda(-D)$.
  \item If $D = 0$, then $m=d/2$ and the restriction of $p$ to
    $\mathcal{F}(0,d/2)$ 
    \begin{displaymath}
      p\colon\mathcal{F}(0,d/2) \longrightarrow \mathcal{H}^{-1}(s)
    \end{displaymath}
    is generically two to one, ramified at the locus of line bundles
    $M \in \Jac^{d/2}(X) = \mathcal{F}(0,d/2)$ satisfying $M^2 \cong
    \Lambda$.
\end{enumerate}
\end{theorem}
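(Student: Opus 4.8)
The key to Theorem~\ref{prop:fibre when spectral curve is reducible} is that when $X_s$ is reducible we have $X_s = X_1 \cup X_2$ where each $X_i \cong X$ (the two components corresponding to the sections $\pm s'$ of $L$), meeting along the divisor $D'$. A Higgs pair $(V,\varphi)$ in $\cH^{-1}(s)$ has $\det(\varphi) = s = (s')^2$, so $\varphi^2 - s' \cdot \varphi + \ldots$—more precisely the eigenvalues of $\varphi$ are $\pm s' \in H^0(X,L)$, and away from $\Supp(D')$ the bundle $V$ splits as a sum of the two eigenline-bundles $M$ and $M^{-1}\Lambda$ for these eigenvalues. So the first step is to set up this eigenspace picture carefully: given $(V,\varphi)$, one of the eigenline-bundles extends to a sub-line-bundle $M \subset V$ with $\varphi(M) \subset ML$ (after possibly rescaling so that $M$ corresponds to the eigenvalue $s'$), and $m = \deg M$ must satisfy the stability bound $m \le d/2$; the quotient $V/M \cong M^{-1}\Lambda$ has degree $d-m$. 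The integer $D(p) := $ (the order to which the two eigenspaces of $\varphi$ fail to be transverse at $p$, equivalently the drop in rank of the composite $M \hookrightarrow V \twoheadrightarrow V/M$ at $p$, measured as the length of the cokernel) defines an effective divisor $D \le D'$, and $\deg D = d - 2m + (\text{something})$—tracking this gives the bound $d/2 - \deg(D) \le m \le d/2$. This recovers the discrete invariants $(D,m)$.

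**Constructing the morphism $p$.** Conversely, given $M \in \Jac^m(X)$ and $q \in E(D,M) \subseteq H^0(D', M^2 L \Lambda^{-1})$, one builds a Higgs pair by taking $V$ to be an extension of $M^{-1}\Lambda$ by $M$ determined by $q$ (the section $q$ of $M^2 L \Lambda^{-1}$ restricted to the divisor $D'$ encodes the gluing/Hecke data along $D'$, i.e.\ how much the naive direct sum $M \oplus M^{-1}\Lambda$ is modified), and $\varphi$ acting as $s'$ on $M$ and $-s'$ on the quotient, with off-diagonal term built from $q$. One checks $\det(\varphi) = -(s')^2 = -s$ up to the conventional sign, $\tr(\varphi) = 0$, $\Lambda^2 V = \Lambda$, and—crucially—semistability. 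This is done relatively over $\Jac^m(X)$ using the Poincar\'e bundle, yielding a family over $\mathcal E(D,m)$ and hence the morphism $p$. The verification that the resulting pair is always semistable (every $\varphi$-invariant sub-line-bundle $N$ has $\deg N \le d/2$) is where one uses that $q \in E(D,M)$ rather than a larger space: if $q$ vanishes too much the pair destabilizes.

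**Properties (1)--(4).** Item (1) is surjectivity: every $(V,\varphi) \in \cH^{-1}(s)$ arises this way, which follows from the eigenspace analysis of the first step together with the observation that the invariant $q$ lands precisely in $E(D,M)$ with $D$ its vanishing divisor (so in fact $q \in F(D,M)$ with this specific $D$). Item (2) reflects the symmetry $M \leftrightarrow M^{-1}\Lambda$: replacing $M$ by the other eigenline-bundle swaps $m \leftrightarrow d - \deg(D) - m$ and sends $q$ to a corresponding section, giving the same Higgs pair; one must check this is a genuine identification of the images of $\mathcal F(D,m_1)$ and $\mathcal F(D,m_2)$. Item (3): the $\C^*$-action is rescaling $q$ (equivalently conjugating $\varphi$ by $\mathrm{diag}(t,t^{-1})$, which is harmless since we fixed $\det V = \Lambda$), so $p$ descends to $\mathcal F(D,m)/\C^*$; injectivity on the quotient holds because $(M, [q])$ is recovered from $(V,\varphi)$ as the eigendata—except when $m = d - \deg(D) - m$, i.e.\ $m = (d-\deg D)/2$, where the two eigenline-bundles have the same degree and cannot be distinguished unless $M^2 \not\cong \Lambda(-D)$ (when $M^2 \cong \Lambda(-D)$ they are isomorphic and we are at the ramification/fixed locus, where additionally $V$ may be a non-split self-extension and the map is genuinely $2{:}1$ generically and branched there). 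Item (4) is the special case $D = 0$: then $V = M \oplus M^{-1}\Lambda$ is forced to split (no extension data), $m = d/2$, and the same degree-coincidence argument gives the generic $2{:}1$ with ramification over $\{M : M^2 \cong \Lambda\}$.

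**Main obstacle.** The hardest part will be item (3), specifically the precise analysis of the fibres of $p$ over the ramification locus $\{M^2 \cong \Lambda(-D)\}$: here one must show the map is exactly generically two-to-one (not worse) and correctly identify the branch locus, which requires understanding when two distinct points of $\mathcal F(D,m)/\C^*$ (or of $\mathcal F(D,m_1)/\C^* \sqcup \mathcal F(D,m_2)/\C^*$) can give isomorphic Higgs pairs. This amounts to a careful automorphism/deformation analysis of the extension data along $D'$ together with the eigenbundle ambiguity, and is the technical heart of the theorem; the surjectivity (1) and the bound (\ref{eq:bound-m}) are comparatively routine bookkeeping with the eigenspace filtration, once the correspondence $q \leftrightarrow$ (transversality defect of eigenspaces along $D'$) is set up cleanly.
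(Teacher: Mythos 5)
Your proposal follows essentially the same route as the paper: stratify the fibre by the eigenbundle data (the sub-line bundle $M_1=\ker(\varphi-s')$ together with the divisor $D=\divisor(\epsilon)$ measuring where the two eigenbundles fail to be transverse), reconstruct $(V,\varphi)$ from $(q,M)$ as an extension of $M^{-1}\Lambda$ by $M$ with off-diagonal term prescribed by $q$ along $D'$, obtain (2) by swapping the two eigenbundles, and get (3)--(4) from the fact that the choice of eigenbundle is the only ambiguity, with the $\C^*$-action absorbed by conjugation by $\mathrm{diag}(\sqrt{\beta^{-1}},\sqrt{\beta})$. So the outline is right and matches the paper's direct (non-spectral) method.

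Two steps are left at the level of assertion where the paper does real work. First, the claim that $q\in H^0(D',M^2L\Lambda^{-1})$ --- a section over a non-reduced divisor --- determines a well-defined pair $(V,\varphi)$, independently of choices and algebraically in families, is exactly where the paper introduces its one technical device: the short exact sequence of complexes (\ref{eq:ses-complexes}), giving $\mathbb{H}^1(X,C^\bullet_2)\cong H^0(D',M^2L\Lambda^{-1})$; a \v{C}ech representative $(x_{ab},y_a)$ of $r(D')^{-1}(q)$ simultaneously supplies the transition matrices of $V$ and the local Higgs fields, with diagonal entries $\pm\sqrt{-1}\,s'$ (which also disposes of the sign you wave away: $\det\varphi=+s$ on the nose). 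Without some such mechanism, ``the off-diagonal term built from $q$'' is not yet defined. Second, your justification of semistability is inverted: membership in $E(D,M)$ is a \emph{lower} bound on the vanishing of $q$, so it does not prevent $q$ from ``vanishing too much'' --- all deeper strata $F(\overline{D},M)$ with $\overline{D}\leq D$ lie inside $E(D,M)$. The actual argument, as in the paper, runs through Proposition~\ref{prop:M-plus-minus} (the two eigenbundles are the only $\varphi$-invariant subbundles, of degrees $m$ and $d-\deg(\divisor(\epsilon))-m$) combined with Lemma~\ref{0<line D<tilde D}, which reads off $\divisor(\epsilon)$ from the exact order of vanishing of the off-diagonal datum; on the stratum $\mathcal{F}(D,m)$ this makes semistability precisely equivalent to (\ref{eq:bound-m}). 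Relatedly, you place the technical heart in the ramification analysis of (3), but in the paper that is dispatched in a few lines once surjectivity and the recovery of $(q_\varphi,M_1)$ are in place; the substantive work is the hypercohomological construction of $p$ and Lemma~\ref{0<line D<tilde D}.
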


\begin{proof}
  The proof takes up the next two sections. In
  Section~\ref{sec:proof-lemma-morphism} we construct the morphism $p$
  and show that it descends to the quotient by $\C^*$ when $D \neq 0$
  (cf.\ Remarks~\ref{rem:C-star-action-FDM} and \ref{rem:properties-EDM}).
  In Section~\ref{sec:proof-lemma-surjective} we prove the
  remaining properties stated.
\end{proof}


\begin{remark}\label{dependence of p on square root of s}
  As will be seen from the construction, the morphism $p$ depends on
  the choice of the square root $s'$ of $s$.
\end{remark}

There is a certain redundancy in the description of the fibre of
Theorem~\ref{prop:fibre when spectral curve is reducible}. Indeed,
note that in (2) of the theorem, we have $m_1=m_2$ if and only if $m_1 =
(d-\deg(D))/2$. Thus we may, by using the larger of the $m_i$, write
\begin{displaymath}
  \mathcal{H}^{-1}(s) = \bigcup p(\mathcal{E}(D,m))
  = \bigcup p(\mathcal{F}(D,m)),
\end{displaymath}
where the union is over $(D,m)$, with $D$ an effective divisor
  $D \leq D'$ such that
  \begin{equation}\label{eq:bound-m-largest}
    (d-\deg(D))/2 \leq m \leq d/2.
  \end{equation}


\subsection{Construction of the morphism $p$}
\label{sec:proof-lemma-morphism}

Any element of $\mathcal{E}(D,m)$ is given by a pair
\begin{math}
(q,M),
\end{math} 
where $M\in\Jac^m(X)$ and
\begin{equation}\label{eq:[q]}
  q\in E(D,M).
\end{equation}
In order to construct a pair $(V,\varphi)\in\cH^{-1}(s)$ from this
data, we shall make use of the short exact sequence of complexes of sheaves
$0 \to C^\bullet_1 \to C^\bullet_2 \to C^\bullet_3 \to 0$ given by the diagram
\begin{equation}\label{eq:ses-complexes}
\xymatrix{&0\ar[d]&0\ar[d]\\
C^\bullet_1:&M^2\Lambda^{-1}\ar[r]^(.4){\Id}\ar[d]_{=}&M^2\Lambda^{-1}\ar[d]^{c}&\\
C^\bullet_2:&M^2\Lambda^{-1}\ar[r]^(.5){c}\ar[d]&M^2L\Lambda^{-1}\ar[d]^{r( D')}&\\
C^\bullet_3:&0\ar[r]^(.4){0}\ar[d]&M^2L\Lambda^{-1}|_{ D'}\ar[d]&\\
&0&0,}\vspace{0,5cm}
\end{equation}
where the map $c$ is defined by $\psi\mapsto\sqrt{-1}\,s'\psi$ and $r(D')$ denotes the restriction to the divisor $D'$.
The associated long exact sequence in hypercohomology
shows that $r(D')$ yields an isomorphism
$$r(D')\colon\mathbb{H}^1(X,C^\bullet_2)\xrightarrow{\cong}\mathbb{H}^1(X,C^\bullet_3)=H^0( D',M^2L\Lambda^{-1}).$$
With respect to some open covering $U=\{U_a\}$ of $X$, choose a
representative $(x_{ab},y_a)$ of the class $r( D')^{-1}(q)\in
\mathbb{H}^1(X,C^\bullet_2)$ corresponding to $q$ of
(\ref{eq:[q]}). 
Then
\begin{equation}\label{eq:gluing condition}
\sqrt{-1}\,s' x_{ab}=y_b-y_a
\end{equation}
and $y_a|_{ D'}=q|_{ D'\cap U_a}$. We shall construct a pair $(V,\varphi)$ from $(x_{ab},y_a)$ as
follows. We let $V$ be the vector bundle defined by taking on each
open $U_a$ the direct sum
\begin{equation}\label{reconstruction of V 1 2}
M|_{U_a}\oplus M^{-1}\Lambda|_{U_a}
\end{equation}
and gluing over $U_{ab}$ via the map
\begin{equation}\label{reconstruction of V 2 2}
f_{ab}=\begin{pmatrix}
 1_M & x_{ab}/2 \\
 0 & 1_{M^{-1}\Lambda}
\end{pmatrix}.
\end{equation}
Also over each open $U_a$, consider the section of
$H^0(U_a,\End_0(M\oplus M^{-1}\Lambda)\otimes L)$ given, with respect
to the decomposition (\ref{reconstruction of V 1 2}), by
\begin{equation}\label{varphi local}
\varphi_a=\begin{pmatrix}
 \sqrt{-1}\,s' & y_a \\
 0 & -\sqrt{-1}\,s'
\end{pmatrix}.
\end{equation}
From (\ref{eq:gluing condition}), one has $f_{ab}\varphi_b=\varphi_a
f_{ab}$, so $\{\varphi_a\}$ gives a global traceless endomorphism
$\varphi:V\to V\otimes L$.

\begin{remark}
  The long exact cohomology sequence associated to the
  vertical short exact sequence on the right of
  (\ref{eq:ses-complexes}) gives an isomorphism
  $\mathbb{H}^1(X,C^\bullet_2) \cong H^1(X,M^2\Lambda^{-1})$.
  The vector bundle $V$ is just the extension
  \begin{displaymath}
    0 \longrightarrow M \longrightarrow V \longrightarrow M^{-1}\Lambda \longrightarrow 0
  \end{displaymath}
  given by the image of $r(D')^{-1}(q)$ under this isomorphism. The
  point of the preceding construction using hypercohomology is that it
  provides a convenient way of encoding the construction of $\varphi$.

  Note also that when $D = 0$, we have $q=0$ which gives rise to
  $V=M\oplus M^{-1}\Lambda$ and $\varphi=\begin{pmatrix}
    \sqrt{-1}\,s' & 0\\
    0 & -\sqrt{-1}\,s'
  \end{pmatrix}$.
\end{remark}

We have thus constructed a Higgs pair $(V,\varphi)$ with $\det(\varphi) =
s$. Moreover, by construction 
\begin{displaymath}
  M = \ker(\varphi - s') \subset V.
\end{displaymath}
It remains to prove that $(V,\varphi)$ is semistable. For that we need
the following obvious observation.
\begin{proposition}
  \label{prop:M-plus-minus}
  Let $(V,\varphi)$ be an $L$-twisted, rank $2$ Higgs pair with $\Lambda^2V=\Lambda$. Assume that $\det(\varphi) \in
  H^0(X,L^2)$ has a square root $s' \in H^0(X,L)$. Let
  \begin{math}
    M_{\pm} = \ker(\varphi\pm s')
  \end{math}
  and let $D=\divisor(\epsilon)$, where $\epsilon\colon M_{+}M_{-}\to
  \Lambda$ is induced by the injective sheaf map $M_{+}\oplus M_{-}
  \into V$. Then
  \begin{displaymath}
    M_{+}M_{-} = \Lambda(-D).
  \end{displaymath}
  Moreover, $M_{+}$ and $M_{-}$ are the only $\varphi$-invariant
  subbundles of $V$.
\end{proposition}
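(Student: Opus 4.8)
The plan is to realise $M_+$ and $M_-$ as the two eigen-subsheaves of $\varphi$ and then read off the proposition by taking second exterior powers. First I would invoke Cayley--Hamilton: since $\varphi$ is trace-free (as $(V,\varphi)$ has fixed determinant) and $\det(\varphi)=s=(s')^2$, the characteristic polynomial of $\varphi$ is $t^2+s$, so $\varphi^2=-(s')^2\,\mathrm{Id}_V$ and hence $(\varphi-\sqrt{-1}\,s')(\varphi+\sqrt{-1}\,s')=0$. Accordingly $M_\pm=\ker(\varphi\mp\sqrt{-1}\,s')$ are coherent subsheaves of $V$ (read consistently with the local form (\ref{varphi local}), so that $M_+$ is the line subbundle called $M$ above); being kernels of maps into the torsion-free sheaf $V\otimes L$ they are saturated, hence locally free. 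Passing to the generic point of $X$, where $s'$ does not vanish and $\varphi$ therefore has the two distinct eigenvalues $\pm\sqrt{-1}\,s'$, I would conclude that $M_+$ and $M_-$ each have rank exactly one, that $M_+\cap M_-=0$, and that the natural sheaf map $M_+\oplus M_-\to V$ is injective with torsion cokernel (so generically an isomorphism).

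Next I would take determinants. The injection $M_+\oplus M_-\hookrightarrow V$ induces a homomorphism of line bundles $\epsilon\colon M_+\otimes M_-\to\Lambda^2 V=\Lambda$, which is nonzero precisely because the injection is generically an isomorphism; regarding $\epsilon$ as a nonzero section of $\Lambda\otimes(M_+M_-)^{-1}$, its divisor of zeros is the effective divisor $D=\divisor(\epsilon)$ of the statement, and $\Lambda\otimes(M_+M_-)^{-1}\cong\mathcal{O}(D)$, that is, $M_+M_-\cong\Lambda(-D)$. This is the first assertion.

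For the second assertion I would take an arbitrary $\varphi$-invariant line subbundle $N\subset V$. Then $\varphi$ restricts to $N$ as multiplication by a section $\mu\in H^0(X,L)$ and, since $\operatorname{tr}(\varphi)=0$, the induced endomorphism of $V/N$ is multiplication by $-\mu$; hence the characteristic polynomial of $\varphi$ equals $t^2-\mu^2$, forcing $\mu^2=-\det(\varphi)=-(s')^2$. Thus the product of the two sections $\mu-\sqrt{-1}\,s'$ and $\mu+\sqrt{-1}\,s'$ of $L$ vanishes identically, so --- $X$ being integral --- one of them is zero and $\mu=\pm\sqrt{-1}\,s'$; therefore $N\subseteq M_+$ or $N\subseteq M_-$, and since $N$ is saturated in $V$ (hence in $M_\pm$) and both are line bundles, $N=M_+$ or $N=M_-$, while conversely $M_\pm$ are $\varphi$-invariant by construction. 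There is no genuine obstacle here --- the paper aptly calls this an obvious observation --- and the one point I would be careful about is the rank-one saturatedness of $M_+$ and $M_-$, needed so that $\epsilon$ is nonzero and the identification $N=M_\pm$ is legitimate; this rests on $s'\neq 0$, which holds since $s=\det(\varphi)\neq 0$ in the situation at hand (if $s'=0$ then $M_+=M_-$ and the statement degenerates, but this case does not occur in the applications).
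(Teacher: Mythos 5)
Your proof is correct, and it is essentially the argument the paper leaves implicit when it calls this an ``obvious observation'': $M_{\pm}$ are the saturated (hence line-subbundle) eigen-sheaves of $\varphi$, the map $\epsilon$ is a nonzero homomorphism of line bundles so $M_{+}M_{-}\cong\Lambda(-D)$ holds by the very definition of $D$, and any $\varphi$-invariant line subbundle acts with eigenvalue a square root of $-\det(\varphi)$, hence lies in --- and by saturation equals --- $M_{+}$ or $M_{-}$. Your explicit handling of the paper's sign convention (the eigenvalues are $\pm\sqrt{-1}\,s'$ in the normalization of (\ref{varphi local}), despite the statement's notation $\ker(\varphi\pm s')$) and of the degenerate case $s'=0$, which does not occur where the proposition is applied, is also appropriate.
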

In view of this proposition, we only need to check that the
semistability condition holds for the $\varphi$-invariant subbundles
$M$ and $M^{-1}\Lambda(-D)$. But this is equivalent to the assumption
(\ref{eq:bound-m}).

We note that our construction can clearly be carried out in
families, and thus the set theoretic map
$p:\mathcal{E}(s)\to\cH^{-1}(s)$ given by
$$p([q],M)=\text{ isomorphism class of }(V,\varphi)\text{ defined
  above}$$ is in fact a morphism.

Finally, in order to see that when $D\neq 0$, the morphism $p\colon
\mathcal{F}(D,m)\to \mathcal{H}^{-1}(s)$ descends to the quotient
$\mathcal{F}(D,m)/\C^*$ proceed as follows: if we carry out the above
construction with $\beta q$ for some $\beta\in\C^*$ we obtain a pair
$(\tilde V, \tilde \varphi)$ and we get an isomorphism $g\colon
(V,\varphi) \to (\tilde V,\tilde \varphi)$ by defining locally, with
respect to the decomposition $M|_{U_a}\oplus M^{-1}\Lambda|_{U_a}$,
$$g_a=\begin{pmatrix}
  \sqrt{\beta^{-1}} & 0\\
  0 & \sqrt{\beta}
\end{pmatrix}$$
for a choice a square root $\sqrt{\beta}$.

\subsection{Conclusion of the proof of Theorem~\ref{prop:fibre when
    spectral curve is reducible}}
\label{sec:proof-lemma-surjective}
We start by some preliminary observations and results.
Let  $(V,\varphi)\in \cH^{-1}(s)$, with $s\in
H^0(X,L^2)\setminus\{0\}$ such that $X_{s}$ is
reducible. Then $(V,\varphi)$ is a rank $2$ semistable $L$-twisted
Higgs pair, such that $\Lambda^2V\cong\Lambda$, $\tr(\varphi)=0$ and
$\det(\varphi)=s$.  Thus $\varphi$ has
eigenvalues 
$$\pm s'\in H^0(X,L)$$ 
which are generically non-zero.
Consider the divisor 
\begin{equation}\label{divisor of lambda}
D'=\divisor(\pm s').
\end{equation} 
Then we have $D_s=2D'$ and
\begin{equation}\label{degree divisor xi not spectral curve}
\deg(D')=d_L.
\end{equation}
Let $M_1,M_2\subset V$ be defined by
$$M_1=\ker(\varphi-s' )\qquad\text{and}\qquad 
M_2=\ker(\varphi+s' ).$$ 
Since, for $i=1,2$, $M_i$ is locally-free and $V/M_i$ is torsion-free,
it follows that $M_i$ is in fact a line subbundle of $V$. Moreover, on
the complement of the divisor $D'$, we have a decomposition $V = M_1
\oplus M_2$ with respect to which $\varphi = \left(\begin{smallmatrix}
    s' & 0 \\
    0 & -s'
\end{smallmatrix}
\right)$.
Even though this does not extend over $D'$, we shall still refer to
the $M_i$ as \emph{eigenbundles} of $\varphi$. 

Let
\begin{equation}\label{divisor of wedge}
   {D}=\divisor(\epsilon),
\end{equation}
where 
\begin{equation}
\label{eq:epsilon}
\begin{aligned}
  \epsilon\colon M_1M_2&\longrightarrow\Lambda^2V \\
  x \otimes y &\longmapsto x\wedge y
\end{aligned}
\end{equation}
is induced by the inclusion $M_1\oplus M_2\hookrightarrow V$.
Then by Proposition~\ref{prop:M-plus-minus} we have
$M_1M_2\cong\Lambda(- {D})$ and hence for $i=1,2$
\begin{equation}
  \label{eq:deg-M_i}
  d/2-\deg( {D}) \leq \deg(M_i)\leq d/2,
\end{equation}
where the second inequality follows from semistability of
$(V,\varphi)$. 




Next we carry out a more careful analysis of $\varphi$ over $D'$. 
Recalling that $\Lambda =
\Lambda^2V$, we have the canonical extension
\begin{equation}\label{one more extension!}
  0\longrightarrow M_1\longrightarrow V\xra{v\mapsto v\wedge-} M_1^{-1}\Lambda
  \longrightarrow 0.
\end{equation}
Over a small open set $U$ in $X$, we can choose a splitting $V_{|U}\cong
M_{1|U}\oplus (M_1^{-1}\Lambda)_{|U}$. Using the definition of $M_1$
and the fact $\tr(\varphi)=0$ we see that, with respect to this
decomposition, $\varphi_{|U}$ is of the form
\begin{equation}\label{form of linegamma U}
\varphi_{|U}=\begin{pmatrix}
    s'  & q \\
     0 & -s' 
\end{pmatrix}
\end{equation}
for some $q$. We shall show that the restriction of $q$ to $D'$ (in
the scheme theoretic sense) is independent of the choice of splitting.  

\begin{proposition}
  \label{prop:q-varphi}
  There is a well defined section
  \begin{equation}\label{def thetagamma 2}
    q_\varphi\in H^0(D',M_1^2L\Lambda^{-1})
  \end{equation}
  given by restriction of $\varphi+s'\Id$ to $D'$.
\end{proposition}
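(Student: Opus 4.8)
The plan is to define $q_\varphi$ intrinsically from the sheaf map $\psi := \varphi + s'\Id\colon V \to V\otimes L$, so that no choice of local splitting enters, and then to verify that in a local splitting as in (\ref{form of linegamma U}) it is represented by the restriction $q|_{D'}$.

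First I would show that $\psi$ factors through the subsheaf $M_1\otimes L \subset V\otimes L$. Since $M_1 = \ker(\varphi - s')$ is $\varphi$-invariant, $\psi$ sends $M_1$ into $M_1\otimes L$ and hence induces a map $\bar\psi\colon V/M_1 \to (V/M_1)\otimes L$ on the quotient. As $\psi$ differs from $\varphi$ by the scalar $s'$, this induced map is $\bar\psi = \bar\varphi + s'\Id$, where $\bar\varphi$ is the Higgs field induced by $\varphi$ on the line bundle $V/M_1 \cong M_1^{-1}\Lambda$; and $\bar\varphi = \tr\varphi - \varphi|_{M_1} = -s'$ because $\tr\varphi = 0$. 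Thus $\bar\psi = 0$, which is exactly the statement that $\psi(V) \subseteq M_1\otimes L$, i.e.\ $\psi\colon V \to M_1\otimes L$.

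Next, $\psi|_{M_1} = (\varphi + s')|_{M_1} = 2s'\,\Id_{M_1}$ vanishes along $D' = \divisor(s')$ (cf.\ (\ref{divisor of lambda})), so the restriction $\psi|_{D'}\colon V|_{D'}\to(M_1\otimes L)|_{D'}$ kills $M_1|_{D'}$. Consequently it factors through $(V/M_1)|_{D'} = (M_1^{-1}\Lambda)|_{D'}$, and the resulting homomorphism $(M_1^{-1}\Lambda)|_{D'}\to(M_1L)|_{D'}$ is by definition an element
\begin{displaymath}
  q_\varphi\in H^0(D', M_1^2L\Lambda^{-1}),
\end{displaymath}
manifestly depending only on $(V,\varphi)$. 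To finish, I would restrict everything to a small open $U$ carrying a splitting $V|_U\cong M_{1|U}\oplus(M_1^{-1}\Lambda)|_U$ of (\ref{one more extension!}): by (\ref{form of linegamma U}), $\psi|_U$ is the matrix $\left(\begin{smallmatrix} 2s' & q \\ 0 & 0 \end{smallmatrix}\right)$ viewed as landing in $M_1\otimes L$, so restricting to $D'$ and using $s'|_{D'}=0$ shows that $q_\varphi$ is represented by $q|_{D'}$; in particular $q|_{D'}$ is independent of the splitting. (Equivalently, a change of splitting by a local section $h$ of $M_1^2\Lambda^{-1}$ replaces $q$ by $q - 2s'h$, which has the same restriction to $D'$.) The one step needing care is the factorization $\psi\colon V\to M_1\otimes L$, but as noted it follows at once from $\tr\varphi = 0$ and the $\varphi$-invariance of $M_1$, so I do not anticipate a genuine obstacle.
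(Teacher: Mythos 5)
Your proposal is correct and follows essentially the same route as the paper: both arguments produce $q_\varphi$ by factoring $\varphi+s'\Id$ through the quotient $(M_1^{-1}\Lambda)|_{D'}$ and into $(M_1L)|_{D'}$, using that $s'$ restricts to zero on $D'$. The only (harmless) difference is the order of the two factorizations — you first establish the global containment $(\varphi+s'\Id)(V)\subseteq M_1\otimes L$ via $\tr\varphi=0$, whereas the paper restricts to $D'$ first and then uses the local form (\ref{form of linegamma U}) to see that the composition with the projection to $(M_1^{-1}\Lambda)|_{D'}$ vanishes.
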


\begin{proof}
  Restrict the sequence (\ref{one more extension!}) to $D'$.  Since
  $\varphi(t) = s't$ for any local section $t$ of $M_1$ and $s'$
  vanishes over $D'$, the restriction $\varphi_{|D'}$ factors through
  $(M_1^{-1}\Lambda)_{|D'}$. The restriction of $s'\Id\colon V \to VL$
  similarly factors and hence so does
\begin{math}
  (\varphi+s'\Id)_{|D'}\colon  V_{|D'} \to (VL)_{|D'},
\end{math}
giving a map
\begin{displaymath}
  \varphi_1\colon  (M_1^{-1}\Lambda)_{|D'} \to (VL)_{|D'}.
\end{displaymath}
Now the local form (\ref{form of linegamma U}) of $\varphi$ shows
that the composition $\varphi_1$ with the projection $(VL)_{|D'} \to
(M_1^{-1}\Lambda)_{|D'}$ vanishes. Hence we have a lift of $\varphi_1$
to a map
\begin{displaymath}
  q_{\varphi}\colon (M_1^{-1}\Lambda)_{|D'} \to (M_1L)_{|D'}
\end{displaymath}
as claimed.
\end{proof}

We have that $q_\varphi$ is a holomorphic section of
$M_1^2L\Lambda^{-1}$, over the subscheme $D'$ so, if
$q_\varphi\neq 0$ and $\divisor(q_\varphi)$ is the
corresponding divisor, then
\begin{equation}\label{divisor thetaphi >0}
\divisor(q_\varphi)\geq 0.
\end{equation}

\begin{lemma}\label{0<line D<tilde D}
  Let $q_\varphi$ be the section given in Proposition~\ref{prop:q-varphi},
  let $D'$ be the effective divisor defined in (\ref{divisor of
    lambda}) and let $ {D}$ be the effective divisor defined in
  (\ref{divisor of wedge}). Then the following statements hold.
\begin{enumerate}
\item If $q_\varphi\neq 0$, then $\divisor(q_\varphi)=D'- {D}$.
\item $0\leq  {D}\leq D'$.
\item $q_{\varphi}=0$ if and only if $ {D}=0$.
\end{enumerate}
\end{lemma}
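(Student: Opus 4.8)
The three statements are really one statement with different packaging, so the plan is to prove (1) first and then deduce (2) and (3) as immediate consequences. The key is to compare the section $q_\varphi$ of $M_1^2L\Lambda^{-1}|_{D'}$ coming from Proposition~\ref{prop:q-varphi} with the divisor $D = \divisor(\epsilon)$ measuring the failure of $M_1 \oplus M_2 \hookrightarrow V$ to be an isomorphism. First I would fix a point $p \in \Supp(D')$, a local coordinate $z$ centred at $p$, and a local trivialization of $M_1$. Writing $s' = uz^{D'(p)}$ with $u$ a local unit, and using the local normal form (\ref{form of linegamma U}) of $\varphi$ with respect to a local splitting $V_{|U} \cong M_{1|U} \oplus (M_1^{-1}\Lambda)_{|U}$, the eigenbundle $M_2 = \ker(\varphi + s')$ is locally spanned by a vector of the form $\bigl(\tfrac{q}{2s'}, 1\bigr)$ once we are away from the zeros of $s'$; clearing denominators, $M_2 \hookrightarrow V$ is locally given (in the complement of $D'$) by $\bigl(\tfrac{q}{2s'}, 1\bigr)$, and the honest sheaf inclusion $M_2 \into V$ extends this by multiplying through by the appropriate power of $z$ needed to kill the pole of $q/s'$. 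Concretely, if $\mathrm{ord}_p(q) = k$ (interpreting $\mathrm{ord}_p(q) = \infty$ when $q_{|D'}$ vanishes to all orders $< D'(p)$, i.e.\ $q_\varphi(p)=0$ in the scheme-theoretic sense at $p$), then the contribution of $p$ to $D = \divisor(\epsilon)$ is $\max\{D'(p) - k,\, 0\}$, because that is exactly the power of $z$ by which one must scale the local generator $\bigl(q/(2s'),1\bigr)$ of $M_2$ to land in $V$ and make the wedge $M_1 \otimes M_2 \to \Lambda^2 V$ non-vanishing at $p$.

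From this local computation statement (1) follows: the section $q_\varphi$ is precisely the restriction of $q$ to the subscheme $D'$, so $\mathrm{ord}_p(q_\varphi) = \min\{\mathrm{ord}_p(q),\, D'(p)\}$ for each $p$, and combining with the identity $D(p) = \max\{D'(p) - \mathrm{ord}_p(q),\,0\}$ just derived, one gets $\mathrm{ord}_p(q_\varphi) = D'(p) - D(p)$ whenever $q_\varphi \neq 0$, i.e.\ $\divisor(q_\varphi) = D' - D$. For (2): since $q_\varphi$ is a section over the subscheme $D'$, its divisor (when nonzero) satisfies $0 \leq \divisor(q_\varphi) \leq D'$ tautologically — vanishing order at each $p$ is between $0$ and $D'(p)$ — and feeding $\divisor(q_\varphi) = D' - D$ into $0 \leq D' - D \leq D'$ gives $0 \leq D \leq D'$. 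If $q_\varphi = 0$ then by definition of scheme-theoretic vanishing $q$ vanishes to order $\geq D'(p)$ at every $p \in \Supp(D')$, so the local computation gives $D(p) = 0$ everywhere, i.e.\ $D = 0$; this proves the "only if" direction of (3), noting also that $\deg(D') = d_L > 0$ so $D'$ is genuinely nonzero and the case $D' - D$ with $q_\varphi = 0$ is not vacuously excluded. Conversely if $D = 0$ then $M_1 \oplus M_2 \cong V$ with $\varphi$ diagonal, so $\varphi + s'\Id$ restricted to $D'$ is $\bigl(\begin{smallmatrix} 2s' & 0 \\ 0 & 0\end{smallmatrix}\bigr)$ which vanishes identically on $D'$ since $s'_{|D'} = 0$; hence $q_\varphi = 0$. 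This closes (3).

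The main obstacle is getting the local bookkeeping in (1) exactly right — in particular being careful about what "$\divisor(q_\varphi)$" means when $q_\varphi$ is a section of a sheaf supported on the non-reduced scheme $D'$ rather than a line bundle on a smooth curve, and making sure the two possible local descriptions of $M_2$ (as $\ker(\varphi + s')$ and via clearing denominators in the normal form) agree as sheaf inclusions into $V$. I would handle this by working with the explicit $2\times 2$ matrices over the local ring $\mathcal{O}_{X,p}$ and its localization at $z$, tracking the exact power of $z$ needed; everything else is formal. One should also double-check that the definition of $D = \divisor(\epsilon)$ via Proposition~\ref{prop:M-plus-minus} (where $\epsilon\colon M_+M_- \to \Lambda$) matches the sign conventions here with $M_1 = \ker(\varphi - s')$, $M_2 = \ker(\varphi + s')$ — but this is just a relabelling $M_+ = M_1$, $M_- = M_2$.
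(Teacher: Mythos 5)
Your proposal is correct and follows essentially the same route as the paper: a pointwise computation in the local splitting of (\ref{form of linegamma U}), comparing $\ord_p(s')=D'(p)$ with $\ord_p(q)$ to get the key identity $D(p)=\max\{D'(p)-\ord_p(q),0\}$ (the paper phrases this as a three-case analysis of the eigenline of $M_2$, you phrase it as clearing the pole of the meromorphic generator), from which (1)--(3) follow exactly as you say. The only blemishes are cosmetic: the kernel generator should be $(-q/(2s'),1)$ rather than $(q/(2s'),1)$, and after scaling to land in $V$ the wedge vanishes to exactly that order (rather than becoming non-vanishing) -- neither affects the order count or the conclusion.
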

\proof First note that (2) follows immediately from (1) and
(\ref{divisor thetaphi >0}).
In order to prove (1) and (3), we consider the local
form (\ref{form of linegamma U}) of $\varphi$ in a neighbourhood $U$
of a point $p \in X$.  Let $k_1(p)=\ord_p(s'
)\geq 0$ and $k_2(p)=\ord_p(q)\geq 0$ so that, if $z$ is a local
coordinate on $U$ centred at $p$, then 
\begin{displaymath}
s' (z)=z^{k_1(p)}f_s' (z)
\qquad\text{and}\qquad q(z)=z^{k_2(p)}f_q(z)
\end{displaymath}
with $\ord_0(f_s' )=\ord_0(f_q)=0$.

Let $(z_1,z_2)$ denote the coordinates on $\C\oplus\C$. We have chosen
a trivialization of $V$ over $U$ such that $M|_{U}$ is defined by the
equation $z_2=0$, i.e., $M|_{U}$ is the subspace of $\C\oplus\C$
generated by the vector $(1,0)$.  Also, the other
eigenbundle $M^{-1}\Lambda(- {D})|_{U}$ is defined by the
equation
$$2s'z_1+q z_2=0.$$
We have the following observations:
\begin{itemize}
 \item if $k_1(p)<k_2(p)$, then $M^{-1}\Lambda(- {D})|_{U}$ is the subspace of $\C\oplus\C$ generated by the vector $(0,1)$, thus $\epsilon:M_1M_2\to\Lambda^2V$ does not vanish in $p$;
\item if $k_1(p)=k_2(p)$, then $M^{-1}\Lambda(- {D})|_{U}$ is the subspace of $\C\oplus\C$ generated by the vector $(1,-2f_s' (0)f_q(0)^{-1})$, so again $\epsilon$ is not zero in $p$;
\item if $k_1(p)>k_2(p)$, then $M^{-1}\Lambda(- {D})|_{U}$ is the subspace of $\C\oplus\C$ generated by the vector $(1,0)$, and now $\epsilon$ vanishes in $p$ with $\ord_p(\epsilon)=k_1(p)-k_2(p)>0$.
\end{itemize}
By definition, $ {D}=\divisor(\epsilon)$, so we have just seen that 
$$ {D}(p)=\begin{cases}
k_1(p)-k_2(p) &\text{ if }k_1(p)>k_2(p)\\
\hspace{0,1cm}0 &\text{ otherwise}.
\end{cases}$$
Furthermore, $D'=\divisor(s' )$, i.e., $D'(p)=\ord_p(s' )=k_1(p)$. Hence,
\begin{equation}\label{line D=tilde D-k2}
 {D}(p)=\begin{cases}
D'(p)-k_2(p) &\text{ if }D'(p)>k_2(p)\\
\hspace{0,1cm}0 &\text{ otherwise}.
\end{cases}
\end{equation}
Now notice that at each point $p\in\Supp(D')$ the section
$q_\varphi$ is given exactly by the value of $q$ at $p$
(cf.\ (\ref{form of linegamma U})),
i.e., 
$$\divisor(q_\varphi)(p)=\ord_p(q)=k_2(p).$$ 
Hence, (\ref{line D=tilde D-k2}) proves (1).

Finally (3) also follows from (\ref{line D=tilde D-k2}).
\endproof

\begin{remark}
  In the situation of irreducible spectral curve $X_s$, the
  eigenbundles are only globally well defined for the pull back of
  $(V,\varphi)$ to $X_s$.  However, one may still define a divisor
  $ {D}$ on $X$ by using the locally defined eigenbundles, and
  it turns out that $ {D}=D'$, in contrast to the present
  situation. The basic reason for this is that on the spectral curve
  the eigenbundles are interchanged by the involution on $X_s$, thus
  tying them more tightly to each other.
\end{remark}

Finally we can finish the proof of Theorem~\ref{prop:fibre when
  spectral curve is reducible}. 
We start by proving (1).  Let
$(V,\varphi)\in\cH^{-1}(s)$. In the preceding we have constructed a
divisor $ {D}$ satisfying $0 \leq  {D} \leq D'$ (by
Lemma~\ref{0<line D<tilde D}) and a $\varphi$-invariant line subbundle
$M_1\subset V$ such that $m = \deg(M_1)$ satisfies the bound
(\ref{eq:bound-m}) (by (\ref{eq:deg-M_i})). We also constructed
$q_{\varphi} \in H^0(D',M_1^2L\Lambda^{-1})$, which by
Lemma~\ref{0<line D<tilde D} satisfies $q_\varphi\in
E( {D},M)$.  Thus we have associated to $(V,\varphi) \in
\mathcal{H}^{-1}(s)$ an element
$$
\zeta(V,\varphi) = (q_{\varphi},M_1) \in \mathcal{E}( {D},m).
$$ 
Analysing the construction of the morphism $p$ given in
Section~\ref{sec:proof-lemma-morphism}, it is easy to see that
$$p(\zeta(V,\varphi))=(V,\varphi).$$
This proves the surjectivity of $p$.

In order to prove the remaining claims of Theorem~\ref{prop:fibre when
  spectral curve is reducible}, let $M_i$ be the eigenbundles of a pair
$(V,\varphi)$ for $i=1,2$. We carried out the preceding construction
of $\zeta(V,\varphi)$ using the eigenbundle $M_1$, but we could
equally well have carried it out using the eigenbundle $M_2$, which
has degree $\deg(M_2) = d - \deg(  D) - \deg(M_1)$. This
proves (2) of Theorem~\ref{prop:fibre when
  spectral curve is reducible}. Finally the remaining claims of the theorem
follow because it is easy to see that the choice of the eigenbundle
$M_1$ is the only ambiguity in the construction. Thus $p\colon
\mathcal{E}(  D,m) \to \mathcal{H}^{-1}(s)$ can only fail to
be injective if $m = d - \deg(  D) - m \iff m =
(d-\deg(D ))/2$ and the eigenbundles are non isomorphic, in which
case $p$ is two to one.

\subsection{Connectedness and dimension of the fibre}
\label{sec:connectedness-dimension-fibre-redicble-case}
For a given $m$, let $d_m$ be the smallest integer greater than or
equal to $d/2-m$, i.e.,
\begin{equation}
  \label{eq:d_m}
  d_m = \lceil d/2-m\rceil
\end{equation}
and define
\begin{displaymath}
  \mathcal{E}(m) = \bigcup_{\deg(D )\geq d_m} \mathcal{E}(D ,m).
\end{displaymath}
Then Theorem~\ref{prop:fibre when spectral curve is reducible} shows
that
\begin{equation}
  \label{eq:decomposition-m}
  \mathcal{H}^{-1}(s) = \bigcup_{d/2-d_L \leq m \leq d/2} p(\mathcal{E}(m))
\end{equation}

\begin{lemma}
  \label{lem:Em-connected}
  The subspace $p(\mathcal{E}(m)) \subset \mathcal{H}^{-1}(s)$ is
  connected for any $m$ satisfying $d/2-d_L \leq m \leq d/2$.
\end{lemma}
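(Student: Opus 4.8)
The plan is to prove the (stronger) statement that the parameter space $\mathcal{E}(m)$ is connected, and then deduce the lemma immediately from the fact that $p$ is a morphism, so that $p(\mathcal{E}(m))$ is the continuous image of a connected space.

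First I would observe that, for every $m$ in the stated range $d/2-d_L \leq m \leq d/2$, the full divisor $D'$ satisfies the bound~(\ref{eq:bound-m}): indeed $0 \leq D' \leq D'$ and $\deg(D') = d_L$, so~(\ref{eq:bound-m}) for $D = D'$ reads precisely $d/2-d_L \leq m \leq d/2$ (equivalently, $\deg(D') = d_L \geq d_m$). Hence $\mathcal{E}(D',m)$ is one of the bundles occurring in the union defining $\mathcal{E}(m)$. On the other hand, by Proposition~\ref{prop:properties-EDM}(1), applied fibrewise over $\Jac^m(X)$ (cf.\ Remark~\ref{rem:properties-EDM}), one has $\mathcal{E}(D,m) \subseteq \mathcal{E}(D',m)$ for every effective divisor $D \leq D'$, in particular for every $D$ appearing in the union. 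Therefore the union collapses and $\mathcal{E}(m) = \mathcal{E}(D',m)$.

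Next I would use that, by Definition~\ref{bundle E(D,m)}, $\mathcal{E}(D',m)$ is the total space of a vector bundle over $\Jac^m(X)$. Since $\Jac^m(X)$ is connected (indeed irreducible), the total space of any vector bundle over it is connected; hence $\mathcal{E}(m)$ is connected. Finally, Theorem~\ref{prop:fibre when spectral curve is reducible}, applied with $D = D'$, supplies the morphism $p\colon \mathcal{E}(D',m) = \mathcal{E}(m) \to \mathcal{H}^{-1}(s)$, and then $p(\mathcal{E}(m))$ is connected, being the continuous image of a connected space. This is the assertion of the lemma.

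I do not anticipate any real difficulty here; the two points that deserve a word of care are (i) checking that $D'$ satisfies~(\ref{eq:bound-m}) throughout the range of $m$, which is what makes the union defining $\mathcal{E}(m)$ collapse to a single vector bundle, and (ii) knowing that $p$ is a morphism on the whole of $\mathcal{E}(D',m)$ — which is exactly the content of Theorem~\ref{prop:fibre when spectral curve is reducible}. If one prefers not to collapse the union, one can argue equivalently that each $\mathcal{E}(D,m)$ is connected (a vector bundle over the connected $\Jac^m(X)$) and that all of them contain the common zero section $\Jac^m(X)$ inside $\mathcal{E}(D',m)$, so that their union is connected, and $p$ is continuous on $\mathcal{E}(m)$ because it restricts to a morphism on each closed stratum $\mathcal{E}(D,m)$; hence again $p(\mathcal{E}(m))$ is connected.
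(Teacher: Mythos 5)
Your proposal is correct and follows essentially the same route as the paper: both arguments collapse the union defining $\mathcal{E}(m)$ (via Proposition~\ref{prop:properties-EDM}(1), using that $D=D'$ satisfies (\ref{eq:bound-m}) exactly when $d/2-d_L\leq m\leq d/2$) to the single vector bundle over $\Jac^m(X)$ with fibre $H^0(D',M^2L\Lambda^{-1})$, whose total space is connected, and then conclude by taking the image under the morphism $p$.
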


\begin{proof}
  It suffices to see that $\mathcal{E}(m)$ is connected. But from (1)
  of Proposition~\ref{prop:properties-EDM} we have
  \begin{displaymath}
    \bigcup_{\deg(D )\geq d_m}E(D,M) = H^0(D',M^2L\Lambda^{-1}).
  \end{displaymath}
  Hence $\mathcal{E}(m)$ is simply the natural vector bundle over
  $\Jac^m(X)$ with fibres $H^0(D',M^2L\Lambda^{-1})$.
\end{proof}

We now prove that the fibre of $\cH$ over a point such that the
spectral curve is reducible, is connected.

\begin{theorem}\label{thm: fibre spectral curve reducible is connected}
  Let $s\in H^0(X,L^2)\setminus\{0\}$ such that $X_s$ is
  reducible. Then the fibre of $\cH:\cM_L^\Lambda\to H^0(X,L^2)$ over
  $s$ is connected.
\end{theorem}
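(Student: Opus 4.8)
The plan is to deduce the statement directly from the stratification already established, together with Lemma~\ref{lem:Em-connected} and the decomposition~(\ref{eq:decomposition-m}). Write $I$ for the (finite, non-empty) set of integers $m$ with $d/2-d_L\leq m\leq d/2$, so that $\cH^{-1}(s)=\bigcup_{m\in I}p(\mathcal{E}(m))$ with each $p(\mathcal{E}(m))$ connected. Since a finite union of connected subspaces that all meet a single fixed connected subspace is connected, it suffices to choose one ``hub'' index $m_0\in I$ and to show that $p(\mathcal{E}(m))\cap p(\mathcal{E}(m_0))\neq\emptyset$ for every $m\in I$. I would take $m_0=\lfloor d/2\rfloor$, which lies in $I$ precisely because $d_L>0$.

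To exhibit these intersections I would invoke part~(2) of Theorem~\ref{prop:fibre when spectral curve is reducible}, which says that $p(\mathcal{F}(D,m_1))=p(\mathcal{F}(D,m_2))$ whenever $m_2=d-\deg(D)-m_1$ and both $m_i$ satisfy the bound~(\ref{eq:bound-m}) for the divisor $D$. Given $m\in I$ with $m<m_0$, set $k=d-m_0-m$. Using $d-m_0=\lceil d/2\rceil$ and $\min I=\lceil d/2\rceil-d_L$ one checks the elementary inequalities $1\leq k\leq d_L=\deg(D')$, so there is an effective divisor $D$ with $0\leq D\leq D'$ and $\deg(D)=k$. For this $D$ we have $m_0=d-\deg(D)-m$, and a second routine check (using only $m\leq m_0\leq d/2$) shows that both $m$ and $m_0$ satisfy~(\ref{eq:bound-m}) for $D$; hence Theorem~\ref{prop:fibre when spectral curve is reducible}(2) yields $p(\mathcal{F}(D,m))=p(\mathcal{F}(D,m_0))$. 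Since $\mathcal{F}(D,m)\subseteq\mathcal{E}(D',m)=\mathcal{E}(m)$ and likewise $\mathcal{F}(D,m_0)\subseteq\mathcal{E}(m_0)$, this common image lies in $p(\mathcal{E}(m))\cap p(\mathcal{E}(m_0))$; and it is non-empty because $\mathcal{F}(D,m)$ is non-empty---by Remark~\ref{restriction to a divisor 1} its fibre over any $M\in\Jac^m(X)$ contains the section equal to $z^{D'(p)-D(p)}$ at each $p\in\Supp(D')$ with $D(p)>0$ and to $0$ at the remaining points of $\Supp(D')$.

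Assembling the pieces: each $p(\mathcal{E}(m))$ is a connected subset of $\cH^{-1}(s)$ meeting the connected subset $p(\mathcal{E}(m_0))$, and the union of all of them is $\cH^{-1}(s)$; therefore $\cH^{-1}(s)$ is connected. I expect the only real care to be needed in (i) the two numerical verifications above, in particular getting the parity of $d$ right---one must not take $m_0=d/2$ when $d$ is odd, but $\lfloor d/2\rfloor$---and (ii) confirming that the morphism $p$ on each $\mathcal{E}(m)$ is the restriction of a single map defined on the full bundle $\mathcal{E}(D',m)$, so that statements about images such as $p(\mathcal{F}(D,m))$ are unambiguous; both are minor. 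No input beyond the already-established Theorem~\ref{prop:fibre when spectral curve is reducible} and Lemma~\ref{lem:Em-connected} should be required.
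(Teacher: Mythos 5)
Your proposal is correct and is essentially the paper's own argument: for each $m$ you pick a subdivisor $D\leq D'$ of degree $d_m=\lceil d/2-m\rceil$ (your $k=d-m_0-m$ equals $d_m$) and use part~(2) of Theorem~\ref{prop:fibre when spectral curve is reducible} to show $p(\mathcal{E}(m))$ meets the hub $p(\mathcal{E}(\lfloor d/2\rfloor))$, then conclude via the decomposition~(\ref{eq:decomposition-m}) and Lemma~\ref{lem:Em-connected}. You merely spell out the numerical checks and the non-emptiness of $\mathcal{F}(D,m)$ that the paper leaves implicit.
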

\proof
For any $m$, take a divisor $D $ of degree $d_m$ (defined in
(\ref{eq:d_m})). Then
\begin{displaymath}
  d - \deg(D ) - m = [d/2]
\end{displaymath}
and (2) of Theorem~\ref{prop:fibre when spectral curve is reducible}
shows that
\begin{displaymath}
  p(\mathcal{E}(m)) \cap p(\mathcal{E}([d/2])) \neq \emptyset.
\end{displaymath}
The conclusion is now immediate from (\ref{eq:decomposition-m}) and
Lemma~\ref{lem:Em-connected}.
\endproof

Now we compute the dimension of every stratum of the fibre and, since
it is connected, of the fibre itself.

\begin{proposition}\label{prop:dimension of strata}
If $s\in H^0(X,L^2)$ is such that $D_s=2 D'$ and $L=\mathcal O( D')$, then:
\begin{enumerate}
 \item $\dim\mathcal F(0,d/2)=g$;
 \item for any effective divisor $  D\leq D'$ and $m$
   satisfying (\ref{eq:bound-m}), $\dim\mathcal F( 
   D,m)=\deg(D)+g-1$;
\item $\dim\cH^{-1}(s)=d_L+g-1$.
\end{enumerate}
\end{proposition}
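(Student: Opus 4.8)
The plan is to deduce all three statements by a short dimension count built on Theorem~\ref{prop:fibre when spectral curve is reducible} and the explicit descriptions of $E(D,M)$ and $F(D,M)$ in Remark~\ref{restriction to a divisor 1} and Remark~\ref{rem:C-star-action-FDM}; throughout, in~(1) and~(2) the symbol $\mathcal{F}(D,m)$ is to be read as the corresponding stratum $p(\mathcal{F}(D,m))\subseteq\cH^{-1}(s)$ of the fibre. Statement~(1) is then immediate: by Definition~\ref{bundle E(D,m)} one has $\mathcal{F}(0,d/2)=\mathcal{E}(0,d/2)=\Jac^{d/2}(X)$, of dimension $g$, and the restriction of $p$ to it is generically finite (in fact generically two to one) by Theorem~\ref{prop:fibre when spectral curve is reducible}(4), so its image also has dimension $g$.

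For~(2) I would first record the dimension of the total space of the bundle $\mathcal{F}(D,m)\to\Jac^{m}(X)$. The base has dimension $g$. For the fibre, Remark~\ref{restriction to a divisor 1} identifies $E(D,M)$, in local coordinates centred at the points of $\Supp(D')$, with the space of jets $\sum_{p}\sum_{k=D'(p)-D(p)}^{D'(p)-1}a_{k}z^{k}$, which is a vector space of dimension $\sum_{p}D(p)=\deg(D)$; and $F(D,M)$ is the open subset of $E(D,M)$ defined by the non-vanishing of the finitely many leading coefficients, so it is non-empty and of the same dimension $\deg(D)$ when $D\neq0$. For $D\neq0$, Theorem~\ref{prop:fibre when spectral curve is reducible}(3) tells us that $p|_{\mathcal{F}(D,m)}$ is $\C^*$-invariant and descends to a morphism $\mathcal{F}(D,m)/\C^*\to\cH^{-1}(s)$ that is generically finite onto its image (an isomorphism, or generically two to one when $m=(d-\deg(D))/2$). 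Therefore the stratum $p(\mathcal{F}(D,m))$ has dimension $\dim\mathcal{F}(D,m)/\C^*=g+\dim F(D,M)/\C^*=g+\deg(D)-1$, using~(\ref{eq:dimcal W/C}).

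For~(3) I would use the decomposition $\cH^{-1}(s)=\bigcup p(\mathcal{F}(D,m))$ furnished by Theorem~\ref{prop:fibre when spectral curve is reducible}, the union running over the finitely many pairs $(D,m)$ with $0\leq D\leq D'$ and $(d-\deg(D))/2\leq m\leq d/2$. Being a finite union, $\cH^{-1}(s)$ has dimension equal to the maximum of the dimensions of its strata, which by~(1) and~(2) is $\max\{\,g,\ \max_{0<D\leq D'}(\deg(D)+g-1)\,\}$. Since $\deg(D)\leq\deg(D')=d_{L}$, with equality for $D=D'$ (a pair $(D',m)$ satisfying the constraints exists, e.g.\ $m=\lfloor d/2\rfloor$, because $d_{L}\geq1$), and since $d_{L}\geq1$ also forces $d_{L}+g-1\geq g$, this maximum equals $d_{L}+g-1$. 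This is consistent with the generic value~(\ref{dimension of generic fibre}) and with Proposition~\ref{prop:dimension singular fibre irreducible spectral curve}.

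The only steps requiring genuine, if light, attention --- the ``main obstacle'' --- are verifying that the morphism $p$ does not drop dimension on any stratum, which is exactly the content of Theorem~\ref{prop:fibre when spectral curve is reducible}(3)--(4), and checking that the top stratum $D=D'$ is non-empty and actually occurs in the decomposition, so that the value $d_{L}+g-1$ is attained rather than being only an upper bound.
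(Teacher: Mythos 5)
Your proposal is correct and is essentially the same argument the paper has in mind: the paper's proof simply declares the statement ``immediate from the definition of $\mathcal{F}(D,m)$ and Theorem~\ref{prop:fibre when spectral curve is reducible}'', and your write-up is exactly that count ($\dim F(D,M)=\deg(D)$ from Remark~\ref{restriction to a divisor 1}, the fibrewise $\C^*$-quotient, finiteness of $p$ on each stratum via parts (3)--(4), and the maximum over strata attained at $D=D'$). Your explicit reading of $\dim\mathcal{F}(D,m)$ as the dimension of the stratum $p(\mathcal{F}(D,m))$ (equivalently of $\mathcal{F}(D,m)/\C^*$) is the correct interpretation, consistent with the paper's remark following the proposition.
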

\proof Immediate from the definition of $\mathcal{F}(D,m)$ and
Theorem~\ref{prop:fibre when spectral curve is reducible}.
\endproof

Hence, every stratum of $\cH^{-1}(s)$ has dimension less or equal than
$\deg(D')+g-1=d_L+g-1$ and this upper bound is reached only by
$\mathcal F(D',m)$ for any integer $m$ such that $d/2-d_L\leq m\leq d/2$.

\begin{remark}\label{rem:fibre for higher degree line bundle}
Recall that $L\cong\mathcal{O}(D')$. Let now $\tilde D$ be a divisor such that $\tilde D>D'$, and consider $\tilde L\cong\mathcal{O}(\tilde D)$. Denote by $\cM_{\tilde L}^\Lambda$ the moduli space of $\tilde L$-twisted Higgs pairs and let $\tilde\cH:\cM_{\tilde L}^\Lambda\to H^0(X,\tilde L^2)$ be the corresponding Hitchin map. Let $\tilde E(D,m)$ be the space defined in the same way as $E(D,m)$ in (\ref{E(D,M)}), but now using $\tilde D$ instead of $D'$. By Remark \ref{restriction to a divisor 1} we see that, for an effective divisor $D$ such that $D\leq D'$, there is a natural inclusion $$E(D,M)\hookrightarrow\tilde E(\tilde D-D'+D,M),$$ which induces an inclusion $$\mathcal{E}(D,m)\hookrightarrow\tilde{\mathcal{E}}(\tilde D-D'+D,m),$$ where $\tilde{\mathcal{E}}(\tilde D-D'+D,M)$ is the bundle over $\Jac^m(X)$ whose fibre over $M$ is $\tilde E(\tilde D-D'+D,M)$, as in Definition \ref{bundle E(D,m)}.
Now, we have a map $p:\mathcal{E}(D,m)\to\cH^{-1}(s)$, depending on a choice of a square root $s'$ of $s$ (see Remark \ref{dependence of p on square root of s}), satisfying the conditions of Theorem \ref{prop:fibre when spectral curve is reducible}. Given $\tilde s'\in H^0(X,\tilde L)$, we can apply Theorem \ref{prop:fibre when spectral curve is reducible} to $\tilde\cH^{-1}(\tilde s'^2)$, to get a map $\tilde p:\tilde{\mathcal{E}}(\tilde D-D'+D,m)\to\tilde\cH^{-1}(\tilde s'^2)$, depending on the choice of $\tilde s'$, and satisfying all the conditions stated in that theorem. This, together with the above inclusions, induces an inclusion $\cH^{-1}(s)\hookrightarrow\tilde\cH^{-1}(\tilde s'^2)$.

In other words, the fibre of $\cH:\cM_L^\Lambda\to H^0(X,L^2)$ over $s\in H^0(X,L^2)$ such that $X_s$ is reducible, can be identified as a subvariety of the fibre of $\tilde\cH:\cM_{\tilde L}^\Lambda\to H^0(X,\tilde L^2)$, over an $\tilde s\in H^0(X,\tilde L^2)$ such that $X_{\tilde s}$ is reducible and $\tilde L$ is a line bundle of the form $\tilde L\cong\mathcal O(\tilde D)$ for some $\tilde D>D'$.
\end{remark}

\section{Main Theorem}

We finish by putting everything together to obtain the following main
result.

\begin{theorem}\label{thm:main}
  Let $L\to X$ be a line bundle with $\deg(L)>0$ and let
  $\cH:\cM^\Lambda_L\to H^0(X,L^2)$ be the Hitchin map. For any
  section $s\in H^0(X,L^2)$, the fibre $\cH^{-1}(s)$ is
  connected. Moreover, if $s \neq 0$, the dimension of the fibre is
  $\dim(\cH^{-1}(s))=d_L+g-1$.
\end{theorem}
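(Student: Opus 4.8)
The plan is to prove this by splitting into cases according to the section $s\in H^0(X,L^2)$, collecting the results of the previous sections when $s\neq 0$ and treating the nilpotent cone $\cH^{-1}(0)$ by a separate argument.

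For $s\neq 0$ I would use Remark~\ref{rem:singular-smooth-X_s}: the spectral curve $X_s$ is either irreducible (smooth or singular) or reducible with two components. In the irreducible case, Theorem~\ref{thm: fibre connected} gives that $\cH^{-1}(s)$ is connected and Proposition~\ref{prop:dimension singular fibre irreducible spectral curve} gives $\dim\cH^{-1}(s)=d_L+g-1$; when $X_s$ is moreover smooth this is just the statement that $\cH^{-1}(s)\cong\Prym_\pi(X_s)$, connected because $\pi$ is ramified (as $\deg(L)>0$; cf.\ Remark~\ref{rem:prym-components}). In the reducible case, Theorem~\ref{thm: fibre spectral curve reducible is connected} gives connectedness and part~(3) of Proposition~\ref{prop:dimension of strata} gives the dimension. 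This disposes of every $s\neq 0$, and in particular of the entire dimension assertion.

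It then remains to show that $\cH^{-1}(0)$ is connected, and this is the step requiring a genuine argument, since the nilpotent cone is not described explicitly. If $H^0(X,L^2)=0$ there is nothing to prove, since $\cH^{-1}(0)=\cM^\Lambda_L$ is connected by Proposition~\ref{prop:moduli-connected}; so assume $H^0(X,L^2)\neq 0$, so that $H^0(X,L^2)\setminus\{0\}$ is connected. I would argue, in the spirit of the proof of Proposition~\ref{prop:big-L-connected-fibres} but now dispensing with the irreducibility hypothesis on $\cM^\Lambda_L$, as follows. Factor the (proper and surjective) Hitchin map as $\cH=g\circ\cH'$ with $\cH'\colon\cM^\Lambda_L\to Y$ having connected fibres and $g\colon Y\to H^0(X,L^2)$ finite; here $Y=\cH'(\cM^\Lambda_L)$ is connected because $\cM^\Lambda_L$ is. By the cases already treated, $\cH^{-1}(s)=\cH'^{-1}(g^{-1}(s))$ is connected for \emph{every} $s\neq 0$, and since the nonempty fibres of $\cH'$ are connected this forces $g^{-1}(s)$ to be a single point for each $s\neq 0$; thus $g$ is bijective over the dense open set $H^0(X,L^2)\setminus\{0\}$. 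Since $H^0(X,L^2)$ is irreducible and normal and $g$ is finite, surjective and generically injective, $Y$ has a unique irreducible component $Y_0$ dominating $H^0(X,L^2)$, and $g$ restricts to a finite birational morphism $(Y_0)_{\mathrm{red}}\to H^0(X,L^2)$, hence an isomorphism by Zariski's main theorem. Then $g$ maps $Y_0$ bijectively onto $H^0(X,L^2)$, so $Y\setminus Y_0\subseteq g^{-1}(0)$ is finite; were it nonempty it would disconnect $Y$, a nonempty finite closed set disjoint from the closed set $Y_0$. Hence $Y=Y_0$, $g$ is a homeomorphism, $g^{-1}(0)$ is a single point, and $\cH^{-1}(0)=\cH'^{-1}(g^{-1}(0))$ is a connected fibre of $\cH'$.

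The main obstacle is precisely this last step. The formal argument only works because we know \emph{all} nonzero fibres to be connected — so that the locus where $g$ fails to be injective is confined to $\{0\}$ — which is exactly what the explicit fibre descriptions of the preceding sections provide; had we only known the generic fibre to be connected, as in Proposition~\ref{prop:big-L-connected-fibres}, the finite morphism $g$ could a priori fail to be an isomorphism over the discriminant and the connectedness of $\cH^{-1}(0)$ would not follow. Once the cases $s\neq 0$ are in hand, the theorem is complete.
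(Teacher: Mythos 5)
Your treatment of $s\neq 0$ coincides with the paper's: both simply invoke Theorems~\ref{thm: fibre connected} and \ref{thm: fibre spectral curve reducible is connected} for connectedness and (\ref{dimension of generic fibre}), Propositions~\ref{prop:dimension singular fibre irreducible spectral curve} and \ref{prop:dimension of strata} for the dimension. For the nilpotent cone, however, you take a genuinely different route. The paper argues point-set topologically: if $\cH^{-1}(0)$ were disconnected, properness (hence closedness) of $\cH$ would give a ball $W$ around $0$ with $\cH^{-1}(W)$ split into two disjoint nonempty open sets; connectedness of the fibres over $s\neq 0$ then forces one of these to lie inside $\cH^{-1}(0)$, producing a connected component of $\cM^\Lambda_L$ and contradicting Proposition~\ref{prop:moduli-connected}. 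You instead upgrade the Stein-factorization argument of Proposition~\ref{prop:big-L-connected-fibres}: knowing \emph{all} nonzero fibres are connected, the finite part $g\colon Y\to H^0(X,L^2)$ has singleton fibres away from $0$, and Zariski's main theorem (finite birational onto the normal affine space) plus connectedness of $Y$ forces $g$ to be a bijection, so $\cH^{-1}(0)$ is a single Stein fibre, hence connected. Both arguments rest on exactly the same inputs (connectedness of all fibres over $s\neq 0$, connectedness of the moduli space, properness of $\cH$); the paper's is shorter and avoids normality/birationality considerations, while yours replaces the irreducibility hypothesis of Proposition~\ref{prop:big-L-connected-fibres} by knowledge of the nonzero fibres and yields slightly more, namely that the Stein factorization of $\cH$ is set-theoretically trivial. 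Your argument is correct as written; the only points worth making explicit are the surjectivity of $\cH$ (equivalently, nonemptiness of all fibres, which for $s\neq 0$ follows from the dimension statements already proved and for $s=0$ from the existence of semistable bundles with determinant $\Lambda$), the fact that ``generically injective and dominant implies birational'' uses characteristic zero, and that $Y\setminus Y_0$, being a finite set of closed points, is closed as well as open — all of which you implicitly use and all of which are unproblematic here.
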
 

\proof 
Assume first that $s \neq 0$.
The connectedness of $\cH^{-1}(s)$ is immediate from Theorems~\ref{thm: fibre connected}, and \ref{thm: fibre spectral curve
  reducible is connected}. The dimension formula follows from
(\ref{dimension of generic fibre}), and Propositions
\ref{prop:dimension singular fibre irreducible spectral curve} and
\ref{prop:dimension of strata}.

It remains to prove that $\cH^{-1}(0)$ is connected\footnote{This argument was outlined to
  us by the referee.}.  Assume that this
is not the case. Then there exist non-empty open sets $U$ and $V$ in
$\cM^\Lambda_L$ such that $\cH^{-1}(0) \subseteq U \cup V$. Since
proper maps are closed, there is an open ball around zero, $W \subset
H^0(X,L^2)$, such that $\cH^{-1}(W) \subseteq U \cup V$. Hence we may
write $\cH^{-1}(W) = \tilde{U} \cup \tilde{V}$ for non-empty disjoint
open sets $\tilde{U}$ and $\tilde{V}$. The connectedness of the fibres
$\cH^{-1}(s)$ for $s \neq 0 $ implies, again using that $\cH$ is
closed, that $\cH^{-1}(W \setminus \{0\})$ is connected. Hence either
$\tilde{U}$ or $\tilde{V}$ is contained in the fibre over zero and is
therefore a connected component of the moduli space
$\cM^\Lambda_L$. Since the moduli space is connected
(Proposition~\ref{prop:moduli-connected}) we have reached a
contradiction.
\endproof

\begin{remark}
\label{rem:deg-L-zero}
  Throughout this paper we have always considered a line bundle $L$ of
  positive degree. Let us say a few words about the situation for $L$
  with $\deg(L)=0$.

  Suppose that $\deg(L)=0$ and that $s\in H^0(X,L^2)$ is
  non-zero. Hence, $L^2\cong\mathcal O$ and $s\in\C^*$.  Suppose that
  $s$ does not admit a square root. Then the spectral curve
  $\pi:X_s\to X$ is a non-trivial unramified double cover. In this
  case, we are in the smooth generic situation, and
  $H^{-1}(s)\cong\Prym_\pi(X_s)$. However, as shown in
  \cite{mumford:1971}, $\Prym_\pi(X_s)$ has two connected components,
  thus the fibre of $\cH$ over $s$ is disconnected.

  On the other hand, if $s=s'^2$ for some $s'\in\C^*$, then
  $L\cong\mathcal O$, and $X_s$ is a disconnected double cover of
  $X$. If $(V,\varphi)\in\cH^{-1}(s)$, then $V\cong M\oplus\Lambda M^{-1}$ and
\begin{displaymath}
\varphi=\pm\left(\begin{array}{cc}\sqrt{-1}s' & 0 \\
0 & -\sqrt{-1}s'\end{array}\right),\quad \text{for some $M\in\Jac^{d/2}(X_s)$}.
\end{displaymath} 
It follows that there is a double cover $\Jac^{d/2}(X_s) \coprod
\Jac^{d/2}(X_s) \to\cH^{-1}(s)$, ramified over pairs $(V,\varphi)$
such that $V\cong M\oplus\Lambda M^{-1}$, with $M^2\cong\Lambda$. In other
words, $\cH^{-1}(s)$ is the disjoint union of two copies of
$\Jac^{d/2}(X_s)$ glued together along the subvariety defined by
$M^2\cong\Lambda$. Hence $\cH^{-1}(s)$ is connected and has dimension
$g$. Finally, notice that the image of $p:\mathcal
F(0,d/2)\to\cH^{-1}(s)$ in (4) of Theorem \ref{prop:fibre when
  spectral curve is reducible} (in the case of $\deg(L)=d_L>0$) can be identified with the image of the above ramified double cover
$\Jac^{d/2}(X_s)\to\cH^{-1}(s)$ for $L\cong\mathcal O$. This is an
example of the phenomenon described in Remark \ref{rem:fibre for
  higher degree line bundle}.
\end{remark}



\end{document}